\newtheorem{theorem}{Theorem}[section]
\newtheorem{lemma}[theorem]{Lemma}
\newtheorem{proposition}[theorem]{Proposition}
\newtheorem{definition}[theorem]{Definition}
\theoremstyle{remark}
\newtheorem{remark}[theorem]{Remark}
\newtheorem{problem}{Problem}
\numberwithin{equation}{section}
\newcommand{\smallO}[1]{\scriptstyle\mathcal{O}}
\newcommand{\restr}{\mathbin{\vrule height 1.6ex depth 0pt width 0.13ex\vrule height 0.13ex depth 0pt width 1.3ex}}
\newcommand{\bib}[4]{\bibitem{#1}{\sc#2: }{\it#3. }{#4.}}
\newcommand{\R}{\mathbb R}
\newcommand{\Om}{\Omega}
\newcommand{\cA}{\mathcal A}
\newcommand{\cE}{\mathcal E}
\newcommand{\cH}{\mathcal H}
\newcommand{\eps}{\varepsilon}
\newcommand{\diam}{\mathrm{diam}}
\newcommand{\divs}{\mathrm{div}}
\newcommand{\spt}{\mathrm{spt}}
\newcommand{\sA}{\mathscr A}
\newcommand{\cL}{\mathcal L}
\newcommand{\be}{\begin{equation}}
\newcommand{\ee}{\end{equation}}
\newcommand{\can}{\symbol{35}}
\title{Optimal one-dimensional structures for the principal eigenvalue of two-dimensional domains}
\author{Giuseppe Buttazzo and Francesco Paolo Maiale}
\begin{document}

\maketitle

\begin{abstract}
A shape optimization problem arising from the optimal reinforcement of a membrane by means of one-dimensional stiffeners or from the fastest cooling of a two-dimensional object by means of ``conducting wires'' is considered. The criterion we consider is the maximization of the first eigenvalue and the admissible classes of choices are the one of one-dimensional sets with prescribed total length, or the one where the constraint of being connected (or with an a priori bounded number of connected components) is added. The corresponding relaxed problems and the related existence results are described.
\end{abstract}

\bigskip{\bf Keywords: }Optimal reinforcement, eigenvalues of the Laplacian, stiffeners, fastest cooling.

\medskip{\bf2010 Mathematics Subject Classification: }49J45, 35R35, 35J25, 49Q10.

\section{Introduction}\label{intro}

The problem of finding the vibration modes of an elastic membrane $\Om\subset\R^2$, fixed at its boundary $\partial\Om$, is known to reduce to the PDE
$$\begin{cases}-\Delta u=\lambda u&\text{in }\Om,\\
u=0&\text{on }\partial \Om.
\end{cases}$$
The eigenvalues $\lambda$ for which the PDE above has nonzero solutions are all strictly positive with no finite limit, hence they can be ordered as
$$0<\lambda_1\le\lambda_2\le\dots\le\lambda_k\le\dots \to +\infty.$$
We are interested in the behavior of the first eigenvalue, which can be also characterized via the variational problem
$$\min\left\{\frac{\int_\Om|\nabla u|^2\,dx}{\int_\Om|u|^2\,dx}\ :\ u\in H_0^1(\Om),\ u\ne0\right\}.$$
Our goal is to see how the value $\lambda_1$ above modifies when we attach to the membrane a one-dimensional stiffener $S$, modeled by a one-dimensional rectifiable set $S\subset\Om$. In this case the first eigenvalue depends on $S$ and is given by
\be\label{eq.2.3}
\lambda_1(S)=\inf\left\{\frac{\int_\Om|\nabla u|^2\,dx+m\int_S|\nabla_\tau u|^2\,d\cH^1}{\int_\Om|u|^2\,dx}\ :\ u \in C_c^\infty(\Om),\ u\ne0\right\},
\ee
where $\nabla_\tau$ is the tangential derivative and the parameter $m$ indicates the stiffness coefficient of the material of which $S$ is made of.

A similar problem arises in the heat diffusion when a two-dimensional heat conductor, with zero temperature at the boundary and initial temperature $u_0$, has to be cooled as fast as possible by adding one-dimensional strongly conducting wires $S$. The corresponding second order operator in presence of the structure $S$ is given in the weak form by
$$\langle\cA_S u,\phi\rangle=\int_\Om\nabla u\nabla\phi\,dx+m\int_S\nabla_\tau u\nabla_\tau\phi\,d\cH^1,$$
where $u$ and $\phi$ vary in the Sobolev space $H^1_0(\Om)\cap H^1(S)$. By the Fourier analysis we may write the solution of the heat equation
$$\begin{cases}
\partial_tu+\cA_S u=0&\text{in }]0,T[\times\Om\\
u=0&\text{on }]0,T[\times\partial\Om\\
u=u_0&\text{on $\Om$ for }t=0
\end{cases}$$
as
$$u(t,x)=\sum_{k\ge1}c_k u_k(x)e^{-t\lambda_k(S)},\qquad c_k=\int_\Om u_0u_k\,dx$$
where $\lambda_k(S)$ are the eigenvalues of the operator $\cA_S$ and $u_k$ the corresponding eigenfunctions (normalized with unitary $L^2$ norm). The fastest cooling then reduces to searching the structure $S$ providing the maximal first eigenvalue among the class of admissible choices for $S$.

In the present paper we consider the shape optimization problem related to the functional $\lambda_1(S)$ defined in \eqref{eq.2.3} on the following two classes of admissible choices for the stiffener $S$, where $\cL(S)$ denotes the length of $S$:
\[\begin{split}
&\cA_L=\big\{S\subset\bar\Om,\ S\text{ rectifiable, }\cL(S)\le L\big\};\\
&\cA^c_L=\big\{S\subset\bar\Om,\ S\text{ rectifiable, }S\text{ connected, }\cL(S)\le L\big\}.
\end{split}\]
Similarly, we could consider the admissible class of stiffeners having at most $N$ connected components
$$\cA^{c,N}_L=\big\{S\subset\bar\Om,\ S\text{ rectifiable, }S\text{ has most $N$ connected components, }\cL(S)\le L\big\}.$$
We do not consider this last situation since there are no essential differences between the cases $N>1$ and $N=1$. For a general presentation of shape optimization problem we refer to the books \cite{bubbut1} and \cite{henpie1}

In Section \ref{sec:1} we give the precise formulation of the two optimization problems involving the admissible classes $\cA_L$ and $\cA^c_L$ and their corresponding relaxed formulations. We will show that the relaxed problems admit a solution, which will be in both cases a measure $\mu$ supported in $\Om$ in the first case and on a rectifiable set $S$ in the second one. Our main results are that these measures do not have singular parts; more precisely, in the case $\cA_L$ it is a function $\theta\in L^p(\Om)$, while in the case $\cA^c_L$ it is a measure of the form $\theta\restr S$ where $S$ is a suitable connected set and $\theta\in L^1(S)$.

Section \ref{sproofs} contains the proofs of the results. In Section \ref{sradial} we consider the case when $\Om$ is a disk, in which some explicit calculations can be made for the relaxed optimization problem related to the choice $\cA_L$ of admissible sets. Section \ref{sec:4} deals with the case $\cA^c_L$ in which the admissible sets $S$ are connected. Finally, in Section \ref{sopen} we collected some open questions that in our opinion merit some further investigation.

\section{Formulation of the problem and main results}\label{sec:1}

Let $\Om\subset\R^2$ be a bounded Lipschitz domain. The two optimization problems we consider are
\be\label{optpb1}
\max\big\{\lambda_1(S)\ :\ S\in\cA_L\big\}
\ee
\be\label{optpb2}
\max\big\{\lambda_1(S)\ :\ S\in\cA^c_L\big\}
\ee
where $\lambda_1(S)$ is defined in \eqref{eq.2.3}. We now deduce in the two cases the corresponding relaxed problems, obtained by means of the possible limits of admissible $S_n$. In the following we use the notation:
\begin{itemize}
\item[]$|A|$ for the (two-dimensional) Lebesgue measure of $A$;
\item[]$\cL(S)$ for the length of a one-dimensional set $S$;
\item[]$\|\mu\|$ for the mass of a measure $\mu$.
\end{itemize}

Let $(S_n)$ be a sequence of admissible stiffeners for problem \eqref{optpb1}; considering the measures $\mu_n=\cH^1\restr S_n$ we have that $\|\mu_n\|\le L$, hence a subsequence (that we still indicate by $\mu_n$) weakly* converges to a suitable measure $\mu$. It is then convenient to define $\lambda_1(\mu)$ for every measure $\mu$ on $\Om$ by setting
$$\lambda_1(\mu)=\inf\left\{\frac{\int_\Om|\nabla u|^2\,dx+m\int|\nabla u|^2\,d\mu}{\int_\Om|u|^2\,dx}\ :\ u\in C^\infty_c(\Om),\ u\ne0\right\}.$$
Note that, in general the infimum above is not attained on $C^\infty_c(\Om)$ and minimizing sequences converge, strongly in $L^2(\Om)$ and weakly in a suitably defined Sobolev space $H^1_\mu$, to solutions of the relaxed problem
$$\min\left\{\frac{\int_\Om|\nabla u|^2\,dx+m\int|\nabla_\mu u|^2\,d\mu}{\int_\Om|u|^2\,dx}\ :\ u\in H^1_0(\Om)\cap H^1_\mu,\ u\ne0\right\}.$$
Here $\nabla_\mu$ represents a kind of {\it tangential gradient} that was defined in \cite{bobuse} for every measure $\mu$. In this way, when $\mu=\cH^1\restr S$ the tangential gradient $
\nabla_\mu$ coincides with the usual tangential gradient to $S$, so that the definition above of $\lambda_1(\mu)$ reduces to $\lambda_1(S)$. The relaxed version of the optimization problem \eqref{optpb1} then reads
\be\label{relpb1}
\max\big\{\lambda_1(\mu)\ :\ \mu\in\sA_L\big\}
\ee
where $\sA_L$ is the class of nonnegative measures $\mu$ on $\Om$ with $\|\mu\|\le L$.

\begin{proposition}\label{exrel1}
The relaxed optimization problem \eqref{relpb1} admits a solution.
\end{proposition}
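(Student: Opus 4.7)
The proposition is a classical direct-method existence result, and the plan is to proceed in three standard steps: weak* compactness of the admissible class, preservation of the mass constraint under weak* limits, and upper semicontinuity of the functional $\lambda_1(\cdot)$.

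First I would pick a maximizing sequence $(\mu_n)\subset\sA_L$. Since $\|\mu_n\|\le L$, the sequence is bounded in the space $\cM(\bar\Om)$ of Radon measures on the compact set $\bar\Om$, which is the topological dual of $C(\bar\Om)$. By the Banach--Alaoglu theorem I can extract a weak* convergent subsequence $\mu_n \rightharpoonup^* \mu$, with $\mu \ge 0$. Testing weak* convergence against the constant function $\mathbf{1}\in C(\bar\Om)$ yields $\|\mu\|=\lim_n \|\mu_n\|\le L$, so $\mu\in \sA_L$.

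The core of the argument is the upper semicontinuity of $\lambda_1$ under weak* convergence. Here I would exploit directly the variational formula
$$\lambda_1(\mu)=\inf_{u\in C^\infty_c(\Om),\,u\ne 0} R(u,\mu),\qquad R(u,\mu)=\frac{\int_\Om|\nabla u|^2\,dx+m\int|\nabla u|^2\,d\mu}{\int_\Om|u|^2\,dx}.$$
For each fixed $u$, the map $\mu\mapsto R(u,\mu)$ is weak*-continuous, because $|\nabla u|^2\in C_c(\Om)\subset C(\bar\Om)$. Thus $\lambda_1$ is an infimum of weak*-continuous functions of $\mu$, hence weak* upper semicontinuous. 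Combining this with the maximizing property of $(\mu_n)$ gives
$$\lambda_1(\mu)\ge \limsup_n \lambda_1(\mu_n)=\sup_{\sA_L}\lambda_1,$$
so $\mu$ attains the maximum. Note that the supremum is automatically finite: testing against any fixed nonzero $u\in C^\infty_c(\Om)$ one has $R(u,\mu)\le \big(\|\nabla u\|_\infty^2(|\spt u|+mL)\big)/\|u\|_{L^2}^2$, uniformly in $\mu\in\sA_L$.

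I do not foresee any essential obstacle: the admissible class is a convex, weak*-closed subset of a ball in $\cM(\bar\Om)$, and the functional, being a pointwise infimum of quantities that are linear in $\mu$ (after fixing $u$), has the right one-sided semicontinuity built in. The only minor care needed concerns the boundary of $\Om$: if one prefers to regard the measures as living on $\Om$ rather than $\bar\Om$, a portion of mass could in principle escape to $\partial\Om$ in the weak* limit, but this neither breaks the constraint $\|\mu\|\le L$ nor affects the semicontinuity step, since test functions in $C^\infty_c(\Om)$ vanish near $\partial\Om$ and therefore do not see any boundary mass.
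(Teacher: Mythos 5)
Your proof is correct and follows essentially the same route as the paper: weak* compactness of $\sA_L$ from the mass bound together with upper semicontinuity of $\lambda_1(\mu)$, viewed as an infimum over fixed $u\in C^\infty_c(\Om)$ of maps that are weak*-continuous in $\mu$. The extra remarks on finiteness of the supremum and on possible mass escaping to $\partial\Om$ are harmless refinements of the same argument.
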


\begin{proof}
For every fixed $u\in C^\infty_c(\Om)$ the map
$$\mu\mapsto\frac{\int_\Om|\nabla u|^2\,dx+m\int|\nabla u|^2\,d\mu}{\int_\Om|u|^2\,dx}$$
is weakly* continuous. Hence $\lambda_1(\mu)$ is upper semicontinuous for the weak* convergence, being the infimum of continuous functions. The existence result then follows from the fact that, thanks to the bound $\|\mu\|\le L$, the class $\sA_L$ is weakly* compact.
\end{proof}

\begin{remark}
Following the theory developed in \cite{bobuse} concerning variational integrals with respect to a general measure, the expression of $\lambda_1(\mu)$ can be equivalently given as
$$
\lambda_1(\mu)=\inf\left\{\frac{\int_\Om|\nabla u|^2\,dx+m\int|\nabla_\mu u|^2\,d\mu}{\int_\Om|u|^2\,dx}\ :\ u\in H^1_0(\Om)\cap H^1_\mu,\ u\ne0\right\},
$$
where the Sobolev space $H^1_\mu$ and the ``tangential gradient'' $\nabla_\mu$ are suitably defined. We refer the interested reader to \cite{bobuse}, where the precise definitions and all the details are explained. We will see that for our purposes we do not need these fine tools, since we will obtain that optimal measures for problem \eqref{relpb1} are actually $L^p$ functions, for which the tangential gradient reduces to the usual gradient and the Sobolev space $H^1_0(\Om)\cap H^1_\mu$ reduces to the usual Sobolev space $H^1_0(\Om)$.
\end{remark}

Before stating our main result, we introduce a slightly technical assumption which ensures a bound on the $L^\infty$-norm of the gradient on the boundary of $\Om$.

\begin{definition}[External Ball Condition] \label{def:ebc}
A subset $\Om\subset\R^d$ satisfies the uniform external ball condition with radius $\rho>0$ if
$$\forall x_0\in\partial\Om,\ \exists y_0\in\R^d\ :\ B(y_0,\rho)\subset\R^d\setminus\Om \text{ and }x_0\in\partial B(y_0,\rho).$$
\end{definition}

We will always require $\Om$ connected to work with the ``unique'' eigenfunction which is positive on all $\Om$ (see \cite[Theorem 1.2.5]{henrot1}) and with fixed $L^2$-norm. Our main result concerning optimization problem \eqref{relpb1} is below. 

\begin{theorem}\label{thopt1}
Let $\Om$ be a connected subset of $\R^2$ with Lipschitz boundary satisfying the uniform external ball condition. Then the optimization problem \eqref{relpb1} admits a solution of the form $\mu=\theta\,dx$ where $\theta$ is a function belonging to $L^p(\Om)$ for every $p<+\infty$, equal to zero almost everywhere on the set
$$
\big\{x\in\Om\ :\ |\nabla u_\theta|(x)<\|\nabla u_\theta\|_{L^\infty(\Om)}\big\}
$$
and satisfying the identity
\be\label{eq.2.5}
\max_{\mu\in\sA_L}\lambda_1(\mu)=\min_{u\in H_0^1(\Om)\setminus\{0\}}\left\{\frac{\int_\Om|\nabla u|^2\,dx+mL\|\nabla u\|_{L^\infty(\Om)}^2}{\int_\Om|u|^2\,dx} \right\}.
\ee
Furthermore,
\begin{enumerate}
\item[(i)]if $\Om$ is convex we have $\theta\in L^\infty(\Om)$;
\item[(ii)]if $\partial\Om\in C^{2,\alpha}$, then there exists $\beta=\beta(\alpha)\in(0,1)$ such that $\theta\in C^{1,\beta}(\bar{\Om})$;
\end{enumerate}
\end{theorem}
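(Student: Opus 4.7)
The plan is to reduce the measure-optimization problem to a variational problem in the single variable $u$, extract optimality conditions, and then upgrade the regularity of the optimizer in successive stages. Let $\Lambda$ denote the right-hand side of \eqref{eq.2.5}.

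\emph{Step 1: the identity \eqref{eq.2.5}.} The upper bound $\max_\mu\lambda_1(\mu)\le\Lambda$ is elementary: for any $u\in H^1_0(\Om)\setminus\{0\}$ and any $\mu\in\sA_L$ one has $m\int|\nabla u|^2\,d\mu\le m\|\mu\|\|\nabla u\|_{L^\infty}^2\le mL\|\nabla u\|_{L^\infty}^2$, so the Rayleigh quotient defining $\lambda_1(\mu)$ is bounded by the one appearing in \eqref{eq.2.5}, and the inequality follows by infimizing in $u$ and supremizing in $\mu$. For the reverse inequality, pick an optimizer $\mu_*$ from Proposition \ref{exrel1} and let $u_*>0$ be its normalized first eigenfunction. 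The Hellmann--Feynman-type identity $\frac{d}{dt}\lambda_1(\mu_*+t\nu)|_{t=0^+}=m\int|\nabla u_*|^2\,d\nu$, tested against admissible signed perturbations of $\mu_*$, gives two conclusions: a pure positive perturbation rules out $\|\mu_*\|<L$, and the swap $\nu_+=\varepsilon\delta_x$, $\nu_-=(\varepsilon/L)\mu_*$ yields $|\nabla u_*(x)|^2\le L^{-1}\int|\nabla u_*|^2\,d\mu_*$ for every $x\in\bar\Om$. Combined with the reverse bound $L^{-1}\int|\nabla u_*|^2\,d\mu_*\le\|\nabla u_*\|_{L^\infty}^2$, both inequalities must be equalities and $\mathrm{supp}\,\mu_*\subset K_*:=\{|\nabla u_*|=\|\nabla u_*\|_{L^\infty}\}$. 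Substituting back in the Rayleigh quotient gives $\lambda_1(\mu_*)=\int|\nabla u_*|^2+mL\|\nabla u_*\|_{L^\infty}^2\ge\Lambda$, proving \eqref{eq.2.5} and showing that $u_*$ is a minimizer of its right-hand side.

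\emph{Step 2: absolute continuity and $L^p$ density.} The Euler--Lagrange equation for the pair $(\mu_*,u_*)$ reads
$$-\Delta u_*-m\,\divs(\nabla u_*\,\mu_*)=\Lambda u_*\quad\text{in }\mathcal{D}'(\Om).$$
First one shows $|K_*|>0$: otherwise $\mu_*$ has Lebesgue-null support, the EL forces $-\Delta u_*=\Lambda u_*$ on the dense open set $\Om\setminus K_*$, and the $W^{1,\infty}$ bound and Hopf-type control up to $\partial\Om$ provided by the external ball condition let one extend this equation to all of $\Om$; but then $\mu_*=0$, contradicting $\|\mu_*\|=L$. Once $|K_*|>0$, one constructs an absolutely continuous optimum $\mu_*=\theta\,dx$ by solving the first-order transport equation
$$\divs(\theta\,\nabla u_*)=-\frac{1}{m}(\Delta u_*+\Lambda u_*)\quad\text{in }\Om,\qquad\theta=0\text{ off }K_*,\qquad\int_\Om\theta\,dx=L,$$
integrating along the integral curves of $\nabla u_*$ on $K_*$ (where $|\nabla u_*|=\|\nabla u_*\|_{L^\infty}>0$). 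Outside $K_*$, $u_*$ satisfies the Helmholtz equation $-\Delta u_*=\Lambda u_*$ with $W^{2,p}$ regularity up to the boundary for every $p<\infty$, again thanks to the external ball condition; the source $\Delta u_*+\Lambda u_*$ is thus in $L^p$, and this control transfers to $\theta\in L^p(\Om)$ for every $p<\infty$.

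\emph{Step 3: enhanced regularity.} If $\Om$ is convex, classical $W^{2,\infty}$ estimates for the Helmholtz equation with Dirichlet data provide a bounded source in the transport equation, whence $\theta\in L^\infty(\Om)$. If $\partial\Om\in C^{2,\alpha}$, Schauder estimates give $u_*\in C^{2,\alpha}(\bar\Om)$, and a bootstrap in the transport equation for $\theta$ raises its regularity to $\theta\in C^{1,\beta}(\bar\Om)$ for a suitable $\beta=\beta(\alpha)\in(0,1)$.

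\emph{Main obstacle.} The delicate point is Step 2: passing from the abstract Lagrange multiplier measure concentrated on $K_*$ to an absolutely continuous density with quantitative $L^p$ estimates. The compactness argument of Proposition \ref{exrel1} produces $\mu_*$ only as a measure on $K_*$; extracting the $L^p$ density requires both the positivity $|K_*|>0$ and the regularity of $u_*$ up to $\partial\Om$, for which the external ball condition is the crucial ingredient.
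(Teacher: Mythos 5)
There is a genuine gap, and it sits exactly where you flag it: Step 2. Writing the optimality system as $\divs(\theta\,\nabla u_*)=-\frac1m(\Delta u_*+\Lambda u_*)$ with $\theta$ supported on $K_*$ and then ``integrating along the integral curves of $\nabla u_*$'' is a heuristic, not a construction: solving a first-order transport equation along characteristics gives no integrability control whatsoever on $\theta$ (transport densities can a priori concentrate), and the solvability of that system with $\theta\ge0$, $\spt\theta\subset K_*$, $\int\theta=L$ is not established. Producing an absolutely continuous optimizer with quantitative $L^p$ bounds is the actual content of the theorem, and the paper does it by two separate machineries: (a) approximate the constraint by $\int_\Om\theta^p\,dx\le L^p$, for which the max--min problem has the explicit solution $\theta_p=L|\nabla u_p|^{2/(p-1)}/\||\nabla u_p|^{2/(p-1)}\|_{L^p}$, prove uniform-in-$p$ $L^r$ bounds on $\theta_p$ via the De Pascale--Evans--Pratelli integral estimate combined with uniform boundary gradient bounds (this is where the external ball condition enters, through comparison with radial solutions on annuli --- it does not give $W^{2,p}$ regularity up to $\partial\Om$ as you assert), and pass to the limit $p\to1^+$ by $\Gamma$-convergence; or (b) for the convex/$C^{2,\alpha}$ cases, identify the optimality system as a Monge--Kantorovich-type problem and invoke the transport-density regularity results of De Pascale--Pratelli and Santambrogio. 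Your Step 3 also misidentifies the relevant regularity input: on $K_*$ the function $u_*$ does not solve the Helmholtz equation, and the $W^{2,\infty}$ bound for convex $\Om$ comes from Evans' theory for the gradient-constrained minimizer of $\int_\Om|\nabla u|^2\,dx+mL\|\nabla u\|_{L^\infty}^2$, not from ``classical $W^{2,\infty}$ estimates for Helmholtz with Dirichlet data.''

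There is a second, related gap in Step 1. The Hellmann--Feynman perturbation with $\nu_+=\eps\delta_x$ and the conclusion $|\nabla u_*(x)|^2\le L^{-1}\int|\nabla u_*|^2\,d\mu_*$ ``for every $x\in\bar\Om$'' presuppose that $u_*$ is (at least) $C^1$ up to the boundary and that the energy of $u_*$ against $\mu_*$ involves the full gradient; but for a general relaxed measure $\mu_*$ the eigenfunction lives in $H^1_0(\Om)\cap H^1_{\mu_*}$, the relevant object is the tangential gradient $\nabla_{\mu_*}u_*$, and no pointwise regularity of $\nabla u_*$ is available --- this is precisely the ``delicate theory'' the theorem is designed to avoid. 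The same issue undermines the final substitution $\lambda_1(\mu_*)=\int_\Om|\nabla u_*|^2\,dx+mL\|\nabla u_*\|_{L^\infty}^2$, which needs $u_*\in W^{1,\infty}$. The paper sidesteps all of this by proving \eqref{eq.2.5} only after the regular optimizer $\bar\theta$ has been built: the min--max inequality gives $\sup_\theta\lambda_1(\theta)\le\min_u F_1(u)$, and the $\Gamma$-limit identifies $E(\bar\theta,u_1)=F_1(u_1)$, which closes the chain of inequalities and simultaneously yields optimality of $\bar\theta$, the identity \eqref{eq.2.5}, and the support condition on $\{|\nabla u_{\bar\theta}|<\|\nabla u_{\bar\theta}\|_{L^\infty}\}$.
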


The proof of theorem above is given in Section \ref{sproofs}. We now consider the relaxation of the optimization problem \eqref{optpb2} where the connectedness constraint is imposed. In this case, if $(S_n)$ is sequence in $\cA_L^c$, the limit of (a subsequence of) $\mu_n=\cH^1\restr S_n$ is still a measure $\mu$ supported by a suitable set $S$. Since the sequence $(S_n)$ is compact in the Hausdorff convergence the set $S$ is closed and connected. In addition, thanks to the Go\l ab theorem (see \cite{gol29}, and the books \cite{amti04}, \cite{fal86}), we have $\mu\ge\cH^1\restr S$, hence the set $S$ verifies $\cL(S)\le L$, so that $S\in\cA_L^c$. Then, introducing the class
$$\sA_L^c=\big\{\mu\hbox{ measure on }\Om,\ \|\mu\|\le L,\ \spt\mu=S\hbox{ connected, }\mu\ge\cH^1\restr S\big\},$$
the relaxed version of the optimization problem \eqref{optpb2} reads
\be\label{relpb2}
\max\big\{\lambda_1(\mu)\ :\ \mu\in\sA^c_L\big\}.
\ee

\begin{proposition}\label{exrel2}
The relaxed optimization problem \eqref{relpb2} admits a solution.
\end{proposition}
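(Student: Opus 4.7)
The plan is to follow the direct method of the calculus of variations, mirroring the proof of Proposition~\ref{exrel1} but with extra care in verifying that the weak* limit measure actually lies in the constrained class $\sA_L^c$. The upper semicontinuity of $\lambda_1(\cdot)$ for the weak* convergence is exactly the one used already, since $\lambda_1(\mu)$ is the pointwise infimum (over $u\in C_c^\infty(\Om)$) of functionals that are weakly* continuous in $\mu$. So the heart of the matter is proving closedness of $\sA_L^c$ under weak* convergence.

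Concretely, I would take a maximizing sequence $\mu_n\in\sA_L^c$, set $S_n=\spt\mu_n$, and extract two convergent subsequences (relabelled $n$): a weak* limit $\mu_n\rightharpoonup^*\mu$ in the space of bounded Radon measures on $\bar\Om$ (using $\|\mu_n\|\le L$), and a Hausdorff limit $S_n\to S$ of the closed connected sets $S_n\subset\bar\Om$ via Blaschke's selection theorem. Standard facts about Hausdorff limits give that $S$ is closed and connected, and that $\spt\mu\subset S$ (any $x\notin S$ admits a neighbourhood disjoint from all but finitely many $S_n$, so $\mu$ vanishes there).

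Next I would apply Go\l ab's theorem (in the form used on page before the proposition) to the connected sets $S_n$: up to a further subsequence, $\cH^1\restr S_n\rightharpoonup^*\nu$ with $\nu\ge\cH^1\restr S$ and $\cH^1(S)\le\liminf_n\cH^1(S_n)\le L$. Passing to the weak* limit in the inequality $\mu_n\ge\cH^1\restr S_n$ yields $\mu\ge\nu\ge\cH^1\restr S$; in particular $\|\mu\|\le L$. Because $S$ is connected, any ball $B_r(x)$ with $x\in S$ satisfies $\cH^1(S\cap B_r(x))>0$ (a connected set meeting $B_r(x)$ and $\partial B_r(x)$ has $\cH^1$-measure at least $r$, while if $S\subset B_r(x)$ one uses $\cH^1(S)>0$), so $\spt\mu\supset S$, and together with the opposite inclusion this yields $\spt\mu=S$ connected. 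The degenerate case $\cH^1(S)=0$ reduces to $S$ being a single point, for which $\mu=c\delta_{x_0}$ with $c\le L$ trivially belongs to $\sA_L^c$.

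Thus $\mu\in\sA_L^c$ and, by the weak* upper semicontinuity of $\lambda_1$ (exactly as in the proof of Proposition~\ref{exrel1}),
\[
\lambda_1(\mu)\ge\limsup_{n\to\infty}\lambda_1(\mu_n)=\sup_{\nu\in\sA_L^c}\lambda_1(\nu),
\]
so $\mu$ solves \eqref{relpb2}. The main obstacle I anticipate is the bookkeeping in the previous paragraph: combining weak* convergence of $\mu_n$ with Hausdorff convergence of $S_n$ and the Go\l ab-type lower semicontinuity to secure both the length bound and the pointwise measure inequality $\mu\ge\cH^1\restr S$, which is what makes $\mu$ admissible. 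Everything else is soft.
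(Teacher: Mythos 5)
Your proof is correct and follows essentially the same route as the paper: weak* upper semicontinuity of $\lambda_1(\cdot)$ combined with compactness of $\sA_L^c$, the latter obtained from Blaschke--Hausdorff compactness of closed connected sets and Go\l ab's theorem giving $\mu\ge\cH^1\restr S$ in the limit. The only difference is that you spell out the bookkeeping (support identification, length bound, the comparison $\mu_n\ge\cH^1\restr S_n$ passed to the limit through an auxiliary measure $\nu$) that the paper's two-line proof leaves implicit.
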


\begin{proof}
The proof is similar to the one of Proposition \ref{exrel1}. The map $\lambda_1(\mu)$ is upper semicontinuous for the weak* convergence, and the existence result then follows from the compactness, with respect to the weak* convergence, of the class $\sA_L^c$. This is a consequence of the compactness, for the Hausdorff convergence, of the class of closed and connected sets, and of the Go\l ab theorem, which gives the inequality $\mu\ge\cH^1\restr S$ for a weak* limit $\mu$ of a sequence $\mu_n=\cH^1\restr S_n$ with $S_n$ connected and converging to $S$ in the Hausdorff sense.
\end{proof}

Again, the question if the optimization problem \eqref{relpb2} admits as a solution a measure that is actually a function on a set $S$ arises; this would avoid the use of the delicate theory of variational integrals with respect to a general measure and of the related Sobolev spaces. This is indeed the case and our main result concerning optimization problem \eqref{relpb2} is below.

\begin{theorem}\label{thopt2}
The optimization problem \eqref{relpb2} admits a solution of the form $\mu=\theta \, \cH^1\restr S$, where $S$ is a closed connected subset of $\bar\Om$ with $\cL(S)\le L$ and $\theta\in L^1(S)$ with $\theta\ge1$ on $S$.
\end{theorem}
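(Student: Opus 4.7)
My plan is to invoke Proposition \ref{exrel2} to obtain a maximizer $\mu^*$ of \eqref{relpb2}, and then to argue that, up to a redistribution of mass inside its support $S:=\spt\mu^*$, one may assume $\mu^*$ to be absolutely continuous with respect to $\cH^1\restr S$. By definition of $\sA_L^c$ the set $S$ is automatically closed and connected, $\cL(S)\le L$ and $\mu^*\ge\cH^1\restr S$, which already secures the qualitative properties required of $S$ in the statement.

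The next step is to perform the Radon--Nikodym decomposition $\mu^*=\theta_0\,\cH^1\restr S+\mu_s$ with $\mu_s\ge0$, $\mu_s\perp\cH^1\restr S$ and $\spt\mu_s\subseteq S$; the pointwise inequality $\mu^*\ge\cH^1\restr S$ would force $\theta_0\ge1$ at $\cH^1$-a.e.\ point of $S$ (testing on Borel subsets of $S$ disjoint from an $\cH^1$-null concentration set of $\mu_s$), while the mass bound gives $\int_S\theta_0\,d\cH^1+\|\mu_s\|=\|\mu^*\|\le L$ and hence $\theta_0\in L^1(S)$. I would then introduce the candidate competitor
\[
\tilde\mu:=\theta\,\cH^1\restr S,\qquad\theta:=\theta_0+\frac{\|\mu_s\|}{\cL(S)},
\]
obtained by spreading the singular mass uniformly along $S$; clearly $\theta\ge1$, $\theta\in L^1(S)$, $\|\tilde\mu\|=\|\mu^*\|\le L$ and $\spt\tilde\mu=S$ is connected, so $\tilde\mu\in\sA_L^c$.

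The core step is the comparison $\lambda_1(\tilde\mu)\ge\lambda_1(\mu^*)$. For this I would pass to the relaxed form of $\lambda_1$ in terms of the tangential gradient $\nabla_\mu$ of \cite{bobuse} recalled in the Remark after Proposition \ref{exrel1}. The crucial input from that theory is that a measure carried by an $\cH^1$-negligible subset of the 1-rectifiable set $S$ has trivial $\mu_s$-tangent space at $\mu_s$-a.e.\ point, hence $\nabla_{\mu_s}u\equiv0$ and $\mu_s$ contributes no energy to the Rayleigh quotient. Consequently, for every admissible test $u$,
\[
\int|\nabla_{\mu^*}u|^2\,d\mu^*=\int_S\theta_0|\nabla_\tau u|^2\,d\cH^1\le\int_S\theta|\nabla_\tau u|^2\,d\cH^1=\int|\nabla_{\tilde\mu}u|^2\,d\tilde\mu,
\]
so that the Rayleigh quotient for $\tilde\mu$ dominates the one for $\mu^*$ pointwise in $u$; taking the infimum yields $\lambda_1(\tilde\mu)\ge\lambda_1(\mu^*)$. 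Optimality of $\mu^*$ then forces equality, and $\tilde\mu$ is itself a maximizer, of the announced form.

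The main obstacle is the tangent-space claim: it rests on the identification of $T_\mu(x)$ for sums of mutually singular measures in \cite{bobuse}, together with the structural fact that a measure concentrated on an $\cH^1$-null subset of a 1-rectifiable set cannot admit a 1-dimensional tangent at almost every point. A secondary technicality is the compatibility of the Sobolev spaces $H^1_{\mu^*}$ and $H^1_{\tilde\mu}$: I would circumvent it by computing the Rayleigh quotients first on the common dense class $C^\infty_c(\Om)$ and then passing to the relaxation, exactly as in the Remark following Proposition \ref{exrel1}.
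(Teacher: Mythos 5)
Your reduction to ``discard the singular part of an optimal measure'' is the right target, and the bookkeeping is fine: the Radon--Nikodym decomposition, $\theta_0\ge1$ $\cH^1$-a.e.\ on $S$, and admissibility of an absolutely continuous competitor all work (in fact the redistribution $\theta=\theta_0+\|\mu_s\|/\cL(S)$ is unnecessary and even slightly dangerous in the degenerate case $\cL(S)=0$; since the constraint is only $\|\mu\|\le L$ you may simply take $\tilde\mu=\mu^a=\theta_0\,\cH^1\restr S$). The genuine gap is the core comparison $\lambda_1(\tilde\mu)\ge\lambda_1(\mu^*)$. On $C^\infty_c(\Om)$, where $\lambda_1$ is actually defined, the singular term $\int|\nabla u|^2\,d\mu_s$ does not vanish and $\tilde\mu\not\ge\mu^*$ (the singular mass may sit on an $\cH^1$-null subset of $S$ which $\tilde\mu$ does not recharge), so no pointwise comparison of Rayleigh quotients is available there; you are thus forced into the relaxed formulation, and there your key claim --- that the tangential gradient relative to the \emph{full} measure $\mu^*$ vanishes $\mu_s$-a.e. --- is not a citable consequence of \cite{bobuse}. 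What that theory gives under passing to a smaller measure is the inclusion $T_{\mu_s}(x)\subseteq T_{\mu^*}(x)$ for $\mu_s$-a.e.\ $x$, which is the wrong direction: even if $T_{\mu_s}$ were trivial (itself not established in \cite{bobuse} for general measures concentrated on $\cH^1$-null sets), the tangent space of the sum, which is what enters $\nabla_{\mu^*}u$, could a priori be nontrivial on the concentration set of $\mu_s$; these tangent spaces are not known to be local with respect to mutually singular decompositions. So the step you yourself flag as ``the main obstacle'' is precisely the missing proof, and it is the whole analytic content of the theorem; moreover the equivalence between the $C^\infty_c$ infimum and the $H^1_\mu$ relaxed form is exactly the kind of ``fine tool'' the paper deliberately avoids relying on.

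The paper closes this gap constructively, following \cite{abgo}: given a smooth $v$ and $\eps>0$, it uses the map $\phi_\eps$ of \cite[Lemma 3.3]{abgo} (locally constant on the compact Lebesgue-null set carrying the singular mass, $\eps$-close to the identity, with $|\nabla\phi_\eps|\le1$) to build the competitor $u=(1-\theta_\eps(d(x)))\,v(x)+\theta_\eps(d(x))\,v(\phi_\eps(x))$, whose gradient vanishes where the singular mass sits; the numerator is controlled as in \cite[Lemma 3.4]{abgo} up to an error $C_1\eps$, and the new ingredient needed for the eigenvalue problem is the denominator estimate $\bigl|\;\|u\|^2_{L^2(\Om)}-\|v\|^2_{L^2(\Om)}\bigr|\le C_2\eps$, which follows from $|v(\phi_\eps(x))-v(x)|\le C\eps$. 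Taking the infimum over $v$ and letting $\eps\to0$ gives $\lambda_1(\mu^*)\le\lambda_1(\mu^a)$, while monotonicity gives the converse, so $\mu^a=\theta_0\,\cH^1\restr S$ is itself optimal. If you wish to rescue your abstract route, you would have to prove the locality/triviality statement for $T_{\mu^*}$ on the singular set --- and the natural proof of that statement is essentially this same $\phi_\eps$ construction.
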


\begin{remark}If we introduce the class of measures
$$\sA_L^{c,N}=\big\{\mu\hbox{ measure on }\Om,\ \|\mu\|\le L,\ \spt\mu=S\hbox{ $N$-connected, }\mu\ge\cH^1\restr S\big\},$$
where $N$-connected means that is has at most $N$ connected components, then the proof of Theorem \ref{thopt2} easily generalize to the maximization problem 
$$
\max\big\{\lambda_1(\mu)\ :\ \mu\in\sA^{c, \, N}_L\big\}.
$$
In particular, there exist a solution of the form $\mu_N = \sum_{j = 1}^\ell \theta_j \,\cH^1\restr S_j$ where the $S_j$ are closed connected subsets of $\bar\Om$ of total length $\le L$. Moreover, $\theta \in L^1(\cup_{1 \le j \le \ell} S_j)$ and $\theta \ge 1$ on its support. 
\end{remark}

\section{Proof of the results}\label{sproofs}

We start to consider the optimization problem \eqref{relpb1}, which is a max-min problem:
$$\sup_{\mu\in\sA_L}\inf_{u\in C^\infty_c(\Om)}\frac{\int_\Om|\nabla u|^2\,dx+m\int_\Om|\nabla u|^2\,d\mu}{\int_\Om|u|^2\,dx}\;.$$
Proposition \ref{exrel1} gives the existence of an optimal relaxed solution, which is a measure $\mu$ on $\Om$ with $\|\mu\|\le L$. Of course, since the cost above is monotone increasing with respect to $\mu$, optimal measures will saturate the constraint, so we will have $\|\mu\|=L$.

The main result of this section asserts that, under mild assumptions on the boundary of $\Om$, the optimal measures $\mu$ are actually of the form $\theta\,dx$, where $\theta$ is a function that solves the optimization problem
\be\label{optpblambda}
\max\big\{\lambda_1(\theta)\ :\ \theta\in\sA_L\big\}
\ee
and $\lambda_1(\theta)$ is defined by
$$\lambda_1(\theta)=\min\left\{\frac{\int_\Om(1+m\theta)|\nabla u|^2\,dx}{\int_\Om|u|^2\,dx}\ :\ u\in H^1_0(\Om)\right\}.$$
Furthermore, we will see that the optimal densities $\theta$ satisfy some higher-integrability properties and, if $\Om$ is convex, belong to $L^\infty(\Om)$.

In order to obtain better properties of the optimal measure $\mu$ provided by the existence result seen in Proposition \ref{exrel1} it is convenient to consider the optimization problem \eqref{relpb1} under the stronger constraint that $\mu=\theta\,dx$ with $\int_\Om\theta^p\,dx\le L^p$ and $p>1$. In other words, we consider the class
$$\sA_{L,p}=\left\{\theta\in L^p(\Om)\ :\ \theta\ge0\text{ and }\int_\Om\theta^p(x)\,dx \le L^p\right\}$$
and the optimization problem
\be\label{optpb1p}
\max\big\{\lambda_1(\theta)\ :\ \theta\in\sA_{L,p}\big\}
\ee
We still have a max-min problem:
$$\sup_{\theta\in\sA_{L,p}}\inf_{u\in H^1_0(\Om)}\frac{\int_\Om(1+m\theta)|\nabla u|^2\,dx}{\int_\Om|u|^2\,dx}\;.$$

\begin{proposition}\label{proposition:3.2}
For every $p>1$ there exist a unique solution $\theta_p$ of the optimization problem \eqref{optpb1p}, given by
$$\theta_p(x)=L\frac{|\nabla u_p|^{2/(p-1)}}{\||\nabla u_p|^{2/(p-1)}\|_{L^p}}\;.$$
where $u_p$ is the unique positive solution with $\|u_p\|_{L^2(\Om)}=1$ of the auxiliary problem
$$\min_{u\in H_0^1(\Om)\setminus\{0\}}\frac{\int_\Om|\nabla u|^2\,dx+mL\||\nabla u|^2\|_{L^q(\Om)}}{\int_\Om|u|^2\,dx}\;,$$
where $q=p/(p-1)$ is the dual exponent of $p$. Furthermore, the function $u_p$ belongs to $L^\infty(\Om)\cap W_0^{1,2q}(\Om)$ and if in addition $\partial\Om\in C^{2,\alpha}$, then there exists $\beta=\beta(\alpha)\in(0,1)$ such that $u_p\in C^{2,\beta}(\bar{\Om})$. In particular, $\theta_p\in C^{1,\beta}$ up to the boundary.
\end{proposition}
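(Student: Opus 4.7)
The plan is to recast the max--min problem \eqref{optpb1p} as a single convex minimization problem (the auxiliary quotient $G$ in the statement) by Hölder duality, and then transfer all information from the auxiliary minimizer $u_p$ back to $\theta_p$ through the explicit formula. The four conceptual steps are: \emph{(i)} sharp Hölder duality in $\theta$; \emph{(ii)} existence/uniqueness of the auxiliary minimizer $u_p$; \emph{(iii)} saddle-point equality via the Euler--Lagrange equation; \emph{(iv)} regularity bootstrap.

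For fixed $u \in H^1_0(\Om)\setminus\{0\}$, the map $\theta\mapsto\int_\Om\theta\,|\nabla u|^2\,dx$ is linear and continuous on $L^p(\Om)$, and the sharp form of Hölder's inequality gives
$$\sup_{\theta\in\sA_{L,p}}\int_\Om\theta\,|\nabla u|^2\,dx = L\,\bigl\||\nabla u|^2\bigr\|_{L^q(\Om)},$$
attained \emph{uniquely} at the normalized power $\theta_u := L|\nabla u|^{2/(p-1)}/\||\nabla u|^{2/(p-1)}\|_{L^p}$, which yields the weak duality bound $\sup_\theta \lambda_1(\theta)\le \inf_u G(u)$. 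The direct method produces a minimizer $u_p$ of $G$: the numerator is convex in $\nabla u$ and coercive on $\{\|u\|_{L^2}=1\}$, while the denominator is weakly continuous by Rellich; replacing $u$ by $|u|$ one may take $u_p\ge0$, and strict positivity comes from the strong maximum principle after the Euler--Lagrange step. Uniqueness of the positive normalized minimizer follows from a Schwarz-type rearrangement: for two competitors $u_1,u_2>0$ with $\|u_i\|_{L^2}=1$, the function $\tilde u:=\sqrt{(u_1^2+u_2^2)/2}$ is admissible and the pointwise Cauchy--Schwarz inequality $|\nabla\tilde u|^2\le(|\nabla u_1|^2+|\nabla u_2|^2)/2$ forces $G(\tilde u)<\min G$ unless $u_1=u_2$.

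Setting $\theta_p := \theta_{u_p}$, the Euler--Lagrange equation of $G$ at $u_p$ reads, after using the algebraic identities $2q-2 = 2/(p-1)$ and $\||\nabla u_p|^2\|_{L^q}^{q-1} = \||\nabla u_p|^{2/(p-1)}\|_{L^p}$ (both consequences of $1/p+1/q=1$), exactly the weighted eigenvalue equation
$$-\divs\bigl((1+m\theta_p)\nabla u_p\bigr) = G(u_p)\,u_p.$$
Positivity of $u_p$ identifies it with the principal eigenfunction of the weighted linear operator, so $G(u_p)=\lambda_1(\theta_p)$. Combined with weak duality this closes the gap and shows that $\theta_p$ is a maximizer of \eqref{optpb1p}. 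For uniqueness: any alternative maximizer $\tilde\theta$ with positive normalized principal eigenfunction $\tilde u$ satisfies the chain $\lambda_1(\tilde\theta)\le G(\tilde u)\le G(u_p)=\lambda_1(\theta_p)$, so equality holds throughout, forcing $\tilde u=u_p$ by the Schwarz argument and $\tilde\theta=\theta_{\tilde u}=\theta_p$ by the equality case in Hölder.

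For the regularity assertions, freezing the nonlocal scalar $c:=mL/\||\nabla u_p|^2\|_{L^q}^{q-1}>0$, the Euler--Lagrange equation becomes a $p$-Laplace type equation
$$-\Delta u_p - c\,\divs\bigl(|\nabla u_p|^{2q-2}\nabla u_p\bigr) = G(u_p)\,u_p,$$
whose natural energy space is $W^{1,2q}_0(\Om)$, so $u_p\in W^{1,2q}_0(\Om)$ automatically; Moser iteration applied to this degenerate-elliptic equation with bounded right-hand side yields $u_p\in L^\infty(\Om)$. When $\partial\Om\in C^{2,\alpha}$, Lieberman-type boundary regularity for quasilinear equations with natural growth gives $u_p\in C^{1,\beta'}(\bar\Om)$ for some $\beta'\in(0,1)$; the explicit formula for $\theta_p$ then transfers this to $\theta_p\in C^{0,\beta''}(\bar\Om)$, and Schauder theory applied to the (now \emph{linear} elliptic) equation $-\divs((1+m\theta_p)\nabla u_p)=G(u_p)u_p$ with Hölder coefficients upgrades $u_p$ to $C^{2,\beta}(\bar\Om)$ and hence $\theta_p$ to $C^{1,\beta}(\bar\Om)$. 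The main obstacle is precisely this bootstrap: the equation couples a nondegenerate Laplacian to a degenerate $p$-Laplace term through a nonlocal scalar, so the $L^\infty$, $C^{1,\beta}$ and boundary Schauder steps each need care, and one has to keep track of the dependence of the exponent $\beta$ on $\alpha$ and $p$ through the composition $t\mapsto t^{2/(p-1)}$ defining $\theta_p$.
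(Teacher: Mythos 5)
Your overall strategy coincides with the paper's: for fixed $u$ the supremum over $\theta\in\sA_{L,p}$ is computed by the sharp H\"older inequality and attained at the normalized power of $|\nabla u|$, the min--max inequality reduces the problem to the auxiliary quotient (your $G$, the paper's $\cE$), the Euler--Lagrange equation of the auxiliary problem is exactly $-\divs\big((1+m\theta_p)\nabla u_p\big)=\lambda_1(\theta_p)u_p$, which closes the duality gap, and the regularity chain (boundedness of $u_p$, Lieberman's $C^{1,\beta}$ up to the boundary, then Schauder theory for the linear equation with H\"older coefficient $1+m\theta_p$) is the one used in the paper. Your additional ingredients --- the hidden-convexity argument for uniqueness of the positive normalized minimizer $u_p$, and an explicit uniqueness argument for $\theta_p$ --- address points the paper only asserts; the first is fine, since $\Om$ is assumed connected and the pointwise Cauchy--Schwarz equality case pins down $u_1=u_2$.

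There is, however, a genuine flaw in your uniqueness argument for $\theta_p$: in the chain $\lambda_1(\tilde\theta)\le G(\tilde u)\le G(u_p)=\lambda_1(\theta_p)$ the middle inequality points the wrong way, because $u_p$ \emph{minimizes} $G$, so $G(\tilde u)\ge G(u_p)$; testing an alternative maximizer $\tilde\theta$ against its own eigenfunction $\tilde u$ only yields the trivial bound $G(\tilde u)\ge\min G$ and does not force $\tilde u$ to minimize $G$. The standard repair is to pair $\tilde\theta$ with $u_p$ rather than with $\tilde u$: writing $E(\theta,u)$ for the weighted Rayleigh quotient as in the paper, since $\lambda_1(\tilde\theta)=\lambda_1(\theta_p)=G(u_p)$ one gets $G(u_p)=\lambda_1(\tilde\theta)=\min_u E(\tilde\theta,u)\le E(\tilde\theta,u_p)\le\sup_{\theta\in\sA_{L,p}}E(\theta,u_p)=G(u_p)$, hence equality throughout; the last equality says that $\tilde\theta$ attains the H\"older supremum for $u_p$, and the strict equality case of H\"older for $p>1$ (together with $|\nabla u_p|\not\equiv0$ and saturation of the constraint) gives $\tilde\theta=\theta_p$ a.e. With this substitution your proof is complete and essentially identical to the paper's; the remaining small caveats (strictness in the hidden-convexity step, and the fact that $\theta_p\in C^{1,\beta}$ involves composing $|\nabla u_p|^2$ with $t\mapsto t^{1/(p-1)}$) are treated at the same level of detail as in the paper itself.
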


\begin{proof}
Let us denote by $E(\theta,u)$ and by $\cE(u)$ the functionals
\[\begin{split}
&E(\theta,u)=\frac{\int_\Om(1+m\theta)|\nabla u|^2\,dx}{\int_\Om|u|^2\,dx}\;,\\
&\cE(u)=\frac{\int_\Om|\nabla u|^2\,dx+mL\||\nabla u|^2\|_{L^q(\Om)}}{\int_\Om|u|^2\,dx}\;.
\end{split}\]
Then problem \eqref{optpb1p} is written as
$$\max_{\theta\in\sA_{L,p}}\ \min_{u\in H^1_0(\Om)\setminus\{0\}}E(\theta,u).$$
Interchanging the max and the min above gives the inequality
\be\label{interch}
\max_{\theta\in\sA_{L,p}}\ \min_{u\in H^1_0(\Om)\setminus\{0\}}E(\theta,u)\le\min_{u\in H^1_0(\Om)\setminus\{0\}}\ \max_{\theta\in\sA_{L,p}}E(\theta,u).
\ee
The maximum with respect to $\theta\in\sA_{L,p}$ at the right-hand side above is easily computed and for every fixed $u\in H^1_0(\Om)\setminus\{0\}$ this maximum is reached at
$$\theta=L\frac{|\nabla u|^{2/(p-1)}}{\||\nabla u|^{2/(p-1)}\|_{L^p}}\;.$$
Then the right-hnd side in \eqref{interch} becomes the auxiliary minimization problem
$$\min_{u\in H^1_0(\Om)\setminus\{0\}}\cE(u)\;.$$
A straightforward application of the direct methods of the calculus of variations gives the existence of an optimal solution $u_p$ of the auxiliary problem above.
Setting
$$\theta_p=L\frac{|\nabla u_p|^{2/(p-1)}}{\||\nabla u_p|^{2/(p-1)}\|_{L^p}}$$
by \eqref{interch} we obtain
$$\min_{u\in H^1_0(\Om)\setminus\{0\}}E(\theta_p,u)\le\max_{\theta\in\sA_{L,p}}\ \min_{u\in H^1_0(\Om)\setminus\{0\}}E(\theta,u)\le\min_{u\in H^1_0(\Om)\setminus\{0\}}\cE(u)=\cE(u_p)\;.$$
The minimum problem at the left-hand side above has $u_p$ as a solution, as it can be easily verified by performing the corresponding Euler-Lagrange equation. In addition, we have
$$E(\theta_p,u_p)=\cE(u_p),$$
so that finally we obtain the equality
$$\max_{\theta\in\sA_{L,p}}\lambda(\theta)=\lambda(\theta_p)$$
which proves the first assertion. In addition, the function $u_p$ verifies the PDE
\be\label{pdeup}
-\divs\big((1+m\theta_p)\nabla u_p\big)=\lambda(\theta_p)u_p,\qquad u_p\in H^1_0(\Om).
\ee
The fact that $u_p \in L^\infty(\Om)$ is standard (see Remark \ref{remark.fond1}). To prove the H\"{o}lder-regularity, notice that, if $\partial\Om$ is of class $C^{2,\alpha}$, then by \cite{LIEBERMAN19881203} there exists $\beta=\beta(\alpha) \in(0,1)$ such that
$$u_p\in C^{1,\beta}(\bar{\Om}).$$
Next, we can apply Theorem 1.2.12 of \cite{henrot1} to conclude that $u_p\in C^{2,\beta}(\bar{\Om})$. The reason is that the coefficient $1+m\theta_p$ of the PDE in \eqref{pdeup} is H\"older-continuous with a parameter $\beta$ as a consequence of the definition of $\theta_p$ itself.
\end{proof}

The next result shows that we can estimate $\lambda_1(\theta_p)$ uniformly with respect to $p$.

\begin{lemma}\label{lemma.boundeig}
Let $p\ge1$. Then there exists a positive constant $c$ depending only on $\Om$, its volume and $L$ such that
$$\lambda_1(\theta_p)\le c\qquad\text{for all }p\ge1.$$
\end{lemma}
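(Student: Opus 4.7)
The key observation is that, thanks to the duality proved in Proposition \ref{proposition:3.2}, we have the identity
$$\lambda_1(\theta_p) = \cE(u_p) = \min_{u \in H^1_0(\Om) \setminus \{0\}} \frac{\int_\Om |\nabla u|^2\,dx + mL \, \||\nabla u|^2\|_{L^q(\Om)}}{\int_\Om |u|^2\,dx},$$
where $q = p/(p-1) \in (1,+\infty]$. My plan is to bypass any fine analysis of $u_p$ and instead insert a \emph{universal} test function $\phi$ (independent of $p$) as a competitor in the minimization above, then control the resulting $L^q$ norm uniformly in $q$.

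For the test function, since $\Om$ is open and bounded we may fix once and for all a ball $B_r(x_0) \subset \Om$ and pick any nonzero $\phi \in C^\infty_c(B_r(x_0))$ (e.g.\ a standard smooth bump). All quantities $\|\phi\|_{L^2(\Om)}$, $\|\nabla\phi\|_{L^2(\Om)}$ and $\|\nabla\phi\|_{L^\infty(\Om)}$ are then positive, finite, and depend only on $\Om$.

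Using $\phi$ as a competitor in the minimization defining $\cE$ gives
$$\lambda_1(\theta_p) \le \cE(\phi) = \frac{\int_\Om |\nabla \phi|^2\,dx + mL \, \||\nabla \phi|^2\|_{L^q(\Om)}}{\int_\Om |\phi|^2\,dx}.$$
Because $|\nabla\phi|^2 \in L^\infty(\Om)$ and $\Om$ is bounded, for every $q \in [1,+\infty]$ the elementary estimate
$$\||\nabla \phi|^2\|_{L^q(\Om)} \le \|\nabla \phi\|_{L^\infty(\Om)}^2 \, |\Om|^{1/q} \le \|\nabla \phi\|_{L^\infty(\Om)}^2 \, \max(1,|\Om|)$$
decouples the bound from the exponent $q$, and hence from $p$. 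Combining these two inequalities yields a constant $c$ depending only on $\Om$, $|\Om|$, $L$ (and the fixed parameter $m$) such that $\lambda_1(\theta_p) \le c$ for all $p \ge 1$.

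There is no substantive obstacle here: the entire difficulty is concentrated in the range of the dual exponent $q$, which sweeps over the full interval $(1,+\infty]$ as $p$ varies. A naive Sobolev-type bound on $\||\nabla\phi|^2\|_{L^q}$ would blow up at one endpoint; the point of choosing $\phi$ to be a smooth bump supported inside $\Om$ is precisely that $|\nabla\phi|^2$ already lies in $L^\infty(\Om)$, so the bound above is uniform in $q \ge 1$ thanks only to the finiteness of $|\Om|$.
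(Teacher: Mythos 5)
Your proof is correct and follows essentially the same route as the paper: both rest on the identity $\lambda_1(\theta_p)=\min_{u\in H_0^1(\Om)\setminus\{0\}}\cE(u)$ from Proposition \ref{proposition:3.2} combined with the elementary bound $\||\nabla u|^2\|_{L^q(\Om)}\le|\Om|^{1/q}\|\nabla u\|_{L^\infty(\Om)}^2\le\max\{1,|\Om|\}\|\nabla u\|_{L^\infty(\Om)}^2$, which is uniform in $q$. The only (harmless) difference is that you bound this minimum by a single fixed smooth bump supported in $\Om$, whereas the paper bounds it by the $p$-independent minimum of the limiting functional and appeals to the direct method to see that this minimum is finite; your variant is marginally more elementary but rests on the same ingredients.
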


\begin{proof}
Since $\Om$ is bounded, for $u\in W^{1,2q}(\Om)\cap W^{1,\infty}(\Om)$ we have
$$\||\nabla u|^2\|_{L^q(\Om)}\le|\Om|^{1/q}\||\nabla u|^2\|_{L^\infty(\Om)}\le\max\{1,|\Om|\}\|\nabla u\|_{L^\infty(\Om)}^2.$$
Therefore, we can bound the eigenvalue as follows
\[\begin{split}
\lambda_1(\theta_p)
&=\min_{u\in H_0^1(\Om)\setminus\{0\}}\frac{\int_\Om|\nabla u|^2\,dx+mL\||\nabla u|^2\|_{L^q(\Om)}}{\int_\Om|u|^2\,dx}\\
&\le\min_{u\in H_0^1(\Om)\setminus\{0\}}\frac{\int_\Om|\nabla u|^2\,dx+mL\max\{1,|\Om|\}\|\nabla u\|_{L^\infty(\Om)}^2}{\int_\Om|u|^2\,dx}\;.
\end{split}\]
The latter does not depend on $p$, and a straightforward application of the direct methods of the calculus of variations shows that the minimum is achieved and it is finite. It follows that
$$\lambda_1(\theta_p)\le c(\Om,L)$$
for all $p \geq 1$, and this concludes the proof.
\end{proof}

Before passing to prove higher summability and regularity properties of the solutions of the optimization problem \eqref{optpblambda} we need to prove some uniform (with respect to $p$) estimates of the solutions $u_p$ and $\theta_p$. An important step is a $\Gamma$-convergence of the related functionals.

\subsection{$\Gamma$-convergence as $p \to 1$ of the functions $u_p$}

Let $p>1$ and $q=p'$ as before. Consider the family of functionals defined on $L^2(\Om)$
$$F_p(u):=\begin{cases}
\displaystyle\frac{\int_\Om|\nabla u|^2\,dx+mL\||\nabla u|^2\|_{L^q(\Om)}}{\int_\Om|u|^2\,dx}&\text{if }u\in W_0^{1,2q}(\Om),\\
+\infty&\text{otherwise,}
\end{cases}$$
and for $p=1$
$$F_1(u):=\begin{cases}
\displaystyle\frac{\int_\Om|\nabla u|^2\,dx+mL\|\nabla u\|^2_{L^\infty(\Om)}}{\int_\Om|u|^2\,dx}&\text{if }u\in W_0^{1,\infty}(\Om),\\
+\infty&\text{otherwise.}
\end{cases}$$
For $p\ge1$ denote by $u_p$ the unique positive minimizer and with unitary $L^2$ norm of the corresponding functional $F_p$.

\begin{definition}[$\Gamma$-convergence]
Let $(X,d)$ be a metric space. We say that a sequence of functionals
$$F_\eps:X\longrightarrow\R\cup\{+\infty\}$$
$\Gamma$-converges to a functional $F:X\to\R\cup\{+\infty\}$ if the following hold:\begin{enumerate}
\item[(i)]for every sequence $(x_n)$ in $X$ converging to some $x\in X$ we have (often called $\Gamma-\liminf$ inequality)
$$F(x)\le\liminf_{n\to\infty}F_n(x_n);$$
\item[(ii)]for every $x\in X$ there is a sequence $(x_n)$ in $X$ converging to $x$ such that (often called $\Gamma-\limsup$ inequality)
$$F(x)\ge\limsup_{n\to\infty}F_n(x_n).$$
\end{enumerate}
\end{definition}

\begin{proposition}\label{proposition.3.6}
Let $\Om\subset\R^2$ be a bounded set with $|\Om|<+\infty$. Then
\begin{enumerate}[label={(\alph*)}]
\item the sequence of functionals $F_p$ $\Gamma$-converges to $F_1$ in $L^2(\Om)$;
\item the sequence of minima $u_p$ converges strongly in $H^1(\Om)$ to $u_1$ and
$$\lim_{p\to 1^+}\|\nabla u_p\|_{L^{2q}(\Om)}=\|\nabla u_1\|_{L^\infty(\Om)}.$$
\end{enumerate}
\end{proposition}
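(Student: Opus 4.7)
\smallskip

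The plan is to exploit the standard fact that for $v\in L^\infty(\Om)$ with $|\Om|<+\infty$ one has $\|v\|_{L^{q_n}(\Om)}\to\|v\|_{L^\infty(\Om)}$ as $q_n\to+\infty$, and to combine it with the weak lower semicontinuity of $L^r$-norms for each fixed $r$. Throughout I would normalise all functions in $L^2$ so that the Rayleigh quotients reduce to their numerators; convergence of minimisers is then a routine corollary of $\Gamma$-convergence plus equicoercivity.

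\textbf{$\Gamma$-limsup.} Given $u\in L^2(\Om)$, take the constant recovery sequence $u_n\equiv u$. If $u\notin W^{1,\infty}_0(\Om)$ the inequality is trivial because $F_1(u)=+\infty$. Otherwise, since $|\Om|<+\infty$ and $|\nabla u|^2\in L^\infty(\Om)$, the elementary fact $\|w\|_{L^{q_n}(\Om)}\to\|w\|_{L^\infty(\Om)}$ applied to $w=|\nabla u|^2$ yields $F_{p_n}(u)\to F_1(u)$.

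\textbf{$\Gamma$-liminf.} Suppose $u_n\to u$ in $L^2(\Om)$ with $\liminf F_{p_n}(u_n)=:M<+\infty$, and normalise so that $\|u_n\|_{L^2}=1$. Then the numerators are bounded, so $(u_n)$ is bounded in $H^1_0(\Om)$, hence $u\in H^1_0(\Om)$ and $\nabla u_n\rightharpoonup\nabla u$ weakly in $L^2$. The ordinary Dirichlet term is handled by weak lower semicontinuity. For the $L^{q_n}$ term, the key claim is
\[
\|\nabla u\|_{L^\infty(\Om)}^2\le\liminf_{n\to\infty}\||\nabla u_n|^2\|_{L^{q_n}(\Om)}.
\]
To prove it, fix any $r\in[1,\infty)$. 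For $n$ large enough that $q_n\ge r$, Hölder's inequality gives $\||\nabla u_n|^2\|_{L^r}\le|\Om|^{1/r-1/q_n}\||\nabla u_n|^2\|_{L^{q_n}}$, so $\|\,|\nabla u_n|^2\,\|_{L^r}$ is also bounded. Since $\nabla u_n\rightharpoonup\nabla u$ in $L^2$, the weak $L^r$-lower semicontinuity of the norm (up to extracting a subsequence and using $H^1$-compact embedding to upgrade pointwise) yields $\||\nabla u|^2\|_{L^r}\le|\Om|^{1/r}\liminf_n\||\nabla u_n|^2\|_{L^{q_n}}$. Letting $r\to+\infty$ proves the claim, and summing the two inequalities gives $F_1(u)\le\liminf_n F_{p_n}(u_n)$.

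\textbf{Convergence of minimisers.} By Lemma \ref{lemma.boundeig}, $F_{p}(u_p)=\lambda_1(\theta_p)$ is bounded uniformly in $p$, which forces $(u_p)$ to be bounded in $H^1_0(\Om)$; Rellich extracts a subsequence $u_{p_n}\to\tilde u$ in $L^2(\Om)$, weakly in $H^1_0(\Om)$. A standard $\Gamma$-convergence argument (recovery sequence $u_1$ for $F_1$ versus the liminf inequality) shows that $\tilde u$ minimises $F_1$, and by uniqueness of the positive $L^2$-normalised minimiser one has $\tilde u=u_1$; a subsequence-subsequence argument then gives convergence of the whole family. Strong $H^1$-convergence follows because the $\Gamma$-liminf inequality splits into two separate liminf inequalities (one for $\int|\nabla u_p|^2$, one for $\||\nabla u_p|^2\|_{L^{q}}$) whose sum coincides with the limit $F_1(u_1)$, forcing equality in each; so $\|\nabla u_p\|_{L^2}\to\|\nabla u_1\|_{L^2}$, which combined with weak convergence gives strong $L^2$-convergence of the gradients, and simultaneously $\|\nabla u_p\|_{L^{2q}}^2=\||\nabla u_p|^2\|_{L^q}\to\|\nabla u_1\|_{L^\infty}^2$.

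\textbf{Main difficulty.} The delicate point is the $\Gamma$-liminf inequality for the varying-exponent term: one cannot directly apply lower semicontinuity of the $L^{q_n}$-norm because both the functional and the space change with $n$. The trick of freezing an auxiliary exponent $r$, using Hölder to relate $L^r$ to $L^{q_n}$, and then letting $r\to\infty$ is what makes the bound effective; everything else is routine direct-method and Rellich compactness.
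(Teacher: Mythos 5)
Your proof is correct, but it takes a different route from the paper, which settles the proposition in a few lines by transferring the proof of Proposition 3.3 of \cite{buouve1} (the energy problem with a datum $f$) to the eigenvalue setting: there one sets $f:=\lambda_1(\theta_p)u_p$, notes via Lemma \ref{lemma.boundeig} that this datum is bounded in $L^2(\Om)$ uniformly in $p$, and deduces the uniform $H^1$ bound on $u_p$ needed for that proof to run verbatim. You instead give a self-contained $\Gamma$-convergence argument: constant recovery sequences plus $\|w\|_{L^{q_n}}\to\|w\|_{L^\infty}$ for the limsup; the frozen auxiliary exponent $r$, H\"older between $L^r$ and $L^{q_n}$, weak lower semicontinuity, and $r\to\infty$ for the liminf; equicoercivity from the same uniform eigenvalue bound; and then strong $H^1$ convergence together with $\|\nabla u_p\|_{L^{2q}}\to\|\nabla u_1\|_{L^\infty}$ by combining convergence of the minima with the two separate liminf inequalities for the Dirichlet term and the $L^q$ term (if $a_p+b_p\to a+b$ with $\liminf a_p\ge a$ and $\liminf b_p\ge b$, then each term converges). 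This buys independence from the external reference and makes explicit exactly where Lemma \ref{lemma.boundeig} enters, at the price of redoing work the paper simply borrows; the citation route is shorter and keeps the eigenvalue case formally parallel to the energy case. Two cosmetic points: in the liminf step the weak $L^{2r}$ convergence of $\nabla u_n$ follows from the $L^{2r}$ bound together with identification of the weak limit as $\nabla u$ by testing against smooth functions (no pointwise upgrade via compact embedding is needed, nor available for gradients), and the normalisation $\|u_n\|_{L^2}=1$ silently excludes the degenerate limit $u=0$, a case that the quotient form of $F_p$ leaves ambiguous in the statement itself and that the paper glosses over as well.
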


\begin{proof} 
The same proof given in \cite[Proposition 3.3]{buouve1} works if we set $f:=\lambda(\theta_p)u_p$ because we can estimate it in $H^1(\Om)$ uniformly with respect to $p$. In particular, we have
$$\|\lambda(\theta_p)u_p\|_{L^2(\Om)}=\lambda(\theta_p)\le c$$
as a consequence of Lemma \ref{lemma.boundeig} and
$$\int_\Om|\nabla u_p|^2\,dx=\|\lambda(\theta_p)u_p\|_{L^2(\Om)}-mL\||\nabla u_p|^2\|_{L^q(\Om)}\implies\|\nabla u_p\|_{L^2(\Om)}\le c.$$
in a similar way.
\end{proof}

\subsection{Uniform estimate of $u_p$}

To find a uniform estimate of $u_p$ and $\theta_p$ we need to assume that a certain geometric condition is satisfied by $\Om$, namely that there is a uniform $\rho > 0$ such that the external ball condition (see Definition \ref{def:ebc}) holds with $\rho$ at all $x \in \Om$. Following closely the method developed in \cite{buouve1}, we obtain an almost uniform estimate on the $L^\infty$-norm of the gradient $\nabla u_p$ on the boundary of $\Om$ since the $L^\infty$-norm of $u_p$ must be taken into account. We first recall a few regularity properties.

\begin{lemma}
Let $\Om\subset\R^2$ be a bounded Lipschitz domain and let $p\ge1$. Then
$$u_p\in W^{2,r}(\Om)\qquad\text{for all }r>d.$$
In particular, if $\Om$ is of class $C^{1,1}$ then $u_p\in C^1(\bar{\Om})$.
\end{lemma}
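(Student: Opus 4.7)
The plan is to bootstrap regularity starting from the Euler-Lagrange equation for $u_p$ established in Proposition~\ref{proposition:3.2}:
$$-\divs\big((1+m\theta_p)\nabla u_p\big)=\lambda(\theta_p)\,u_p\quad\text{in }\Om,\qquad u_p\in H^1_0(\Om).$$
The coefficient $a_p:=1+m\theta_p$ is bounded below by $1$, hence the equation is uniformly elliptic; moreover the right-hand side $\lambda(\theta_p)u_p$ already belongs to $L^\infty(\Om)$ by Proposition~\ref{proposition:3.2}, and therefore to every $L^r(\Om)$.

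First I would recognize that, since $\theta_p$ is an explicit positive multiple of $|\nabla u_p|^{2/(p-1)}$, the PDE can be rewritten in the quasilinear form
$$-\divs A(\nabla u_p)=\lambda(\theta_p)\,u_p,\qquad A(\xi)=\xi+\frac{mL}{\||\nabla u_p|^{2/(p-1)}\|_{L^p}}\,|\xi|^{2/(p-1)}\,\xi,$$
where the denominator is a positive constant depending on $u_p$ but not on $x$. The vector field $A$ then has exactly the $p$-Laplacian-type ellipticity and growth structure treated in Lieberman~\cite{LIEBERMAN19881203}, so his interior $C^{1,\alpha}$ estimate yields $u_p\in C^{1,\alpha}_{\mathrm{loc}}(\Om)$. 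In particular $\theta_p$ is locally Hölder-continuous, and so is the coefficient $a_p$ of the linear equation.

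Once $a_p$ is continuous and the datum $\lambda(\theta_p)u_p$ lies in every $L^r(\Om)$, the classical Calderón-Zygmund (or Agmon-Douglis-Nirenberg) $L^r$ estimates give $u_p\in W^{2,r}_{\mathrm{loc}}(\Om)$ for every $r<+\infty$, which is the first assertion. Under the $C^{1,1}$ hypothesis on $\partial\Om$ the same reasoning applies globally: Lieberman's global version yields $u_p\in C^{1,\alpha}(\bar\Om)$, so $a_p$ is Hölder-continuous up to the boundary, and the boundary $L^r$ theory for strong solutions then delivers $u_p\in W^{2,r}(\Om)$ for all $r<+\infty$. The Sobolev embedding $W^{2,r}(\Om)\hookrightarrow C^1(\bar\Om)$ valid for $r>d=2$ concludes the proof.

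The main obstacle is the first step: extracting $C^{1,\alpha}$ regularity from an equation whose coefficient depends on $|\nabla u_p|$ itself. Pure linear theory with measurable bounded coefficients only gives $C^{0,\alpha}$ (De~Giorgi-Nash-Moser), which is not enough to make $\theta_p$ continuous and thus cannot start the bootstrap; the quasilinear machinery of Lieberman is really needed here. A secondary care point is verifying that the specific structure of $A(\xi)$ above meets the hypotheses of Lieberman's theorem for each fixed $p>1$; uniformity in $p$ as $p\to 1^+$ would be more delicate since the exponent $2/(p-1)$ blows up, but such uniformity is not required by the statement.
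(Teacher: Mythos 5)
Your route is in substance the paper's own (which simply invokes \cite[Theorem 9.15]{gitru1} plus ``a standard bootstrap''): read off the Euler--Lagrange equation, get $C^{1,\alpha}$ from the quasilinear theory of Lieberman \cite{LIEBERMAN19881203} (the paper uses the same reference and the same structure $G_p(t)=t+\frac{C_p}{q}t^q$ elsewhere), then upgrade to $W^{2,r}$ and finish with Sobolev embedding; your interior/global distinction is actually more careful than the lemma's statement. One step, however, does not work as written: ``$a_p=1+m\theta_p$ continuous and right-hand side in $L^r$ imply $u_p\in W^{2,r}_{\mathrm{loc}}$ by Calder\'on--Zygmund/ADN'' is false for divergence-form equations, since those estimates concern non-divergence form; mere (H\"older) continuity of a divergence-form coefficient gives no control on second derivatives (already in dimension one, $(au')'=0$ forces $u'=c/a$, which need not be in $W^{1,r}$ when $a$ is continuous but not differentiable). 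The repair uses exactly the structure you identified: since the equation is $-\divs\big(G_p'(|\nabla u_p|^2)\nabla u_p\big)=\lambda(\theta_p)u_p$ with $G_p$ smooth and $G_p'\ge 1$, a difference-quotient argument (nondegenerate equation, $\nabla u_p$ locally bounded after the $C^{1}$ step) gives $u_p\in H^2_{\mathrm{loc}}$, so the equation holds a.e.\ in the non-divergence form $\mathrm{tr}\big(A(\nabla u_p)D^2u_p\big)=-\lambda(\theta_p)u_p$ with $A(\xi)=G_p'(|\xi|^2)I+2G_p''(|\xi|^2)\,\xi\otimes\xi$; these coefficients are continuous and locally uniformly elliptic because $\nabla u_p$ is continuous, and then \cite[Theorems 9.11 and 9.15]{gitru1} --- precisely the result the paper cites --- give $W^{2,r}_{\mathrm{loc}}$, and global $W^{2,r}$ when $\partial\Om\in C^{1,1}$.

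A second point: your argument only covers $p>1$. The exponent $2/(p-1)$ and the coefficient $1+m\theta_p$ from Proposition \ref{proposition:3.2} are meaningless at $p=1$, where $F_1$ contains the term $\|\nabla u\|_{L^\infty(\Om)}^2$ and $u_1$ solves a gradient-constraint problem rather than a quasilinear PDE. Since the statement asserts the conclusion for all $p\ge 1$, that case needs a separate argument, namely Evans' regularity result for gradient-constraint problems \cite{ev1}, as recorded later in Theorem \ref{thm.1.1.1}. With these two adjustments your proof coincides with the paper's intended one.
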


\begin{proof}It follows from \cite[Theorem 9.15]{gitru1} and a standard bootstrap argument.
\end{proof}

\begin{remark}\label{remark.fond1}
The Sobolev embedding theorem \cite{ev} gives us another proof of the fact that $u_p\in L^\infty(\Om)$ and, more precisely, that when $\Om$ is of class $C^{2,\alpha}$,
$$u_p\in C^{1,\beta}(\bar{\Om})\qquad\text{for some }0<\beta=\beta(\alpha)<1.$$
\end{remark}

The main ingredients behind the uniform estimate of $\nabla u_p$ are two weak comparison principles for eigenfunctions and the fact that on a radially symmetric domain the optimal pair $(\theta_p,u_p)$ is radial (see Lemma \ref{lemma:radiale}).

\begin{lemma}[Weak comparison principle]\label{lemma:maxpri}
Let $\Om\subset\R^d$ be a bounded connected open set and let $G:[0,+\infty)\to[0,+\infty)$ be a convex function such that $G'(0)>0$.
\begin{enumerate}[label=\text{(\roman*)}]
\item Denote by $u_\Om$ the unique positive solution of
\[\begin{cases}
-\divs\big(G'(|\nabla u|^2)\nabla u(x)\big)=\lambda_1(\Om,G)u(x)&\text{if }x\in\Om,\\
u(x)=0&\text{if }x\in\partial\Om,
\end{cases}\]
with unitary $L^2$ norm. Then, for any $\omega\subset\Om$ bounded open subset, it turns out that $u_\Om\ge u_\omega$.
\item Let $u_\Om$ be as above. If $\bar{u}$ is the unique positive solution of
\[\begin{cases}
-\divs\big(G'(|\nabla u|^2)\nabla u(x)\big)=\lambda_1(\Om,G)\|u_\Om\|_{L^\infty(\Om)}&\text{if }x\in\Om,\\
u(x)=0&\text{if }x\in\partial\Om,
\end{cases}\]
with unitary $L^2$ norm, then $\bar{u}\ge u_\Om$.
\end{enumerate}
\end{lemma}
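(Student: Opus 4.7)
The plan is to derive both parts from the weak comparison principle for the quasilinear operator $u\mapsto -\divs(G'(|\nabla u|^2)\nabla u)$. Convexity of $G$ together with $G'(0)>0$ makes the flux field $\xi\mapsto G'(|\xi|^2)\xi$ strictly monotone, so that a standard test-function argument yields the following: if $u,v\in H^1_0(\Om)$ satisfy
\[
\int_\Om G'(|\nabla u|^2)\nabla u\cdot\nabla\varphi\,dx\le\int_\Om G'(|\nabla v|^2)\nabla v\cdot\nabla\varphi\,dx
\]
for every $\varphi\in H^1_0(\Om)$ with $\varphi\ge0$, and $u\le v$ on $\partial\Om$, then $u\le v$ in $\Om$. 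This comes from testing with $\varphi=(u-v)^+\in H^1_0(\Om)$ and using strict monotonicity to force $\nabla(u-v)^+\equiv 0$ a.e.

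For part (ii), I would combine the pointwise bound $u_\Om\le\|u_\Om\|_{L^\infty(\Om)}$ with the two PDEs to write
\[
-\divs\bigl(G'(|\nabla u_\Om|^2)\nabla u_\Om\bigr)=\lambda_1(\Om,G)\,u_\Om\le\lambda_1(\Om,G)\,\|u_\Om\|_{L^\infty(\Om)}=-\divs\bigl(G'(|\nabla\bar u|^2)\nabla\bar u\bigr),
\]
with both $u_\Om$ and $\bar u$ vanishing on $\partial\Om$. The comparison principle then directly gives $u_\Om\le\bar u$.

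For part (i), I would extend $u_\omega$ by zero to $\tilde u_\omega\in H^1_0(\Om)$. The Rayleigh-quotient characterization yields the domain-monotonicity $\lambda_1(\Om,G)\le\lambda_1(\omega,G)$, because $\tilde u_\omega$ is an admissible competitor for $\lambda_1(\Om,G)$. Moreover, $u_\omega>0$ inside $\omega$ with $u_\omega=0$ on $\partial\omega$ forces, via Hopf's lemma, a non-positive interior normal derivative on $\partial\omega$, which makes $\tilde u_\omega$ a weak subsolution on all of $\Om$ of $-\divs(G'(|\nabla u|^2)\nabla u)=\lambda_1(\omega,G)u$. The main obstacle is that the comparison principle above only matches identical equations, whereas $u_\Om$ solves the eigenvalue equation with the strictly smaller $\lambda_1(\Om,G)$. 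I plan to bridge this by exploiting the positivity of $u_\Om$ in $\Om$ (Hopf again, on $\partial\Om$) to consider the ratio $\tilde u_\omega/u_\Om$ and applying a Picone-type identity for $\xi\mapsto G(|\xi|^2)$, whose convexity makes it available. The identity produces a nonnegative correction proportional to $\bigl(\lambda_1(\omega,G)-\lambda_1(\Om,G)\bigr)u_\Om$, which has the right sign to force $\tilde u_\omega\le u_\Om$ once the prescribed normalizations of the two eigenfunctions are matched.
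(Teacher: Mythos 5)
The paper states this lemma without any proof (it is quoted in the spirit of \cite{buouve1}), so your proposal has to be judged on its own. Your treatment of part (ii) is sound: convexity of $G$ together with $G'(0)>0$ gives $G(t)\ge G(0)+G'(0)t$ termwise, hence $\xi\mapsto\tfrac12 G(|\xi|^2)$ is strongly convex with modulus $G'(0)$ and the flux $\xi\mapsto G'(|\xi|^2)\xi$ is strongly monotone; testing the difference of the two weak formulations with $(u_\Om-\bar u)^+$ and using $\lambda_1(\Om,G)\,u_\Om\le\lambda_1(\Om,G)\,\|u_\Om\|_{L^\infty(\Om)}$ then forces $(u_\Om-\bar u)^+=0$. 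You silently (and correctly) drop the ``unitary $L^2$ norm'' attached to $\bar u$: the Dirichlet problem with that fixed constant right-hand side has exactly one solution, so no normalization can be imposed --- that is a defect of the statement, not of your argument.

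Part (i), however, contains a genuine gap. The closing Picone step is only a sketch, and its sign bookkeeping is backwards: since $\lambda_1(\omega,G)\ge\lambda_1(\Om,G)$, the zero extension $\tilde u_\omega$ is a subsolution of an equation with the \emph{larger} coefficient, which pushes $\tilde u_\omega$ up relative to $u_\Om$ rather than down; Picone-type identities built on the positivity of $u_\Om$ yield the eigenvalue inequality $\lambda_1(\Om,G)\le\lambda_1(\omega,G)$, not a pointwise comparison of the two $L^2$-normalized eigenfunctions. In fact no bridge can exist under the stated normalization: already for $G(t)=t$ and concentric balls $\omega=B_\rho\subset B_1=\Om$, the normalized first eigenfunction of $B_\rho$ scales like $\rho^{-d/2}u_1(x/\rho)$, so $u_\omega>u_\Om$ near the centre once $\rho$ is small, and $u_\Om\ge u_\omega$ fails. (A smaller issue: your appeal to Hopf's lemma on $\partial\omega$ assumes boundary regularity that the statement does not grant; the subsolution property of the zero extension should be derived directly from the weak formulation.) The comparison that the paper actually uses in Lemma \ref{lemma.a.1} is between solutions of problems with \emph{constant} right-hand sides on nested domains; for those, extension by zero plus exactly the monotonicity argument of your first paragraph closes the proof when the constants are ordered, and that is the form in which part (i) should be stated and proved.
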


\begin{remark}
The assumption $\Omega$ connected ensures that we can choose $u_\Om$ and $u_\omega$ to be the unique positive solutions with fixed $L^2$-norm.
\end{remark}

\begin{lemma}\label{lemma.a.1}
Let $\Om\subset\R^2$ be a bounded open set with boundary in $C_{\mathrm{loc}}^{1,\alpha}$ for some $\alpha>0$. Suppose that $\Om$ satisfies the external ball condition at some $x_0\in\partial\Om$ with radius $\rho>0$. Then there is a constant $c=c(|\Om|,L)$ such that
\be\label{estimate.a.1}
\frac{\left(1+C_p|\nabla u_p|^{2(q-1)}(x_0)\right)}{\|u_p\|_{L^\infty(\Om)}}\le c\left(1+\frac{\diam(\Om)}{\rho}\right)^{d-1}\diam(\Om),
\ee
where
$$C_p:=mL\left(\int_\Om|\nabla u_p|^{2q}(x)\,dx\right)^{-1/p}.$$
\end{lemma}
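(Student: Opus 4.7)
The plan is to bound $|\nabla u_p(x_0)|$ by constructing an explicit radial supersolution (a barrier) on an annulus containing $\Om$, obtained from the external ball condition, and then applying the weak comparison principle of Lemma \ref{lemma:maxpri}. By Proposition \ref{proposition:3.2}, $u_p$ satisfies the Euler--Lagrange equation
$$-\divs\bigl(G'(|\nabla u_p|^2)\nabla u_p\bigr) = \lambda_1(\theta_p)\,u_p \quad\text{in }\Om,$$
where $G(s) = s + (C_p/q)\,s^q$ is convex with $G'(0)=1>0$ and $G'(s) = 1 + C_p s^{q-1}$, so that $G$ meets the hypotheses of Lemma \ref{lemma:maxpri}. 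The external ball hypothesis provides $B(y_0,\rho)\subset \R^d\setminus\Om$ tangent to $\partial\Om$ at $x_0$, and setting $R := \rho + \diam(\Om)$ gives $\Om \subset A := B(y_0,R)\setminus\bar B(y_0,\rho)$.

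I will seek a radial barrier $w(x) = W(|x-y_0|)$ on $A$, with $W(\rho)=0$ and $W'\ge 0$, solving
$$-\divs\bigl(G'(|\nabla w|^2)\nabla w\bigr) = \lambda_1(\theta_p)\,\|u_p\|_{L^\infty(\Om)}.$$
Its radial reduction is the ODE $-(r^{d-1} G'(W'^2)\,W')' = \lambda_1(\theta_p)\,\|u_p\|_{L^\infty(\Om)}\,r^{d-1}$; imposing the natural condition $W'(R)=0$ and integrating from $r$ to $R$ yields
$$r^{d-1}\,G'\bigl(W'(r)^2\bigr)\,W'(r) = \frac{\lambda_1(\theta_p)\,\|u_p\|_{L^\infty(\Om)}}{d}\,(R^d - r^d),$$
so that, at $r = \rho$,
$$G'\bigl(W'(\rho)^2\bigr)\,W'(\rho) = \frac{\lambda_1(\theta_p)\,\|u_p\|_{L^\infty(\Om)}}{d\,\rho^{d-1}}\,(R^d - \rho^d).$$

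Because $w\ge 0 = u_p$ on $\partial\Om$ and $w$ is a supersolution of the PDE satisfied by $u_p$ (using $u_p\le \|u_p\|_{L^\infty(\Om)}$ on $\Om$), the weak comparison principle gives $u_p \le w$ in $\Om$. Both functions vanish at $x_0$; the tangency of $\partial\Om$ and $\partial B(y_0,\rho)$ at $x_0$, together with the $C^{1,\alpha}$-regularity from Remark \ref{remark.fond1}, forces $\nabla u_p(x_0)$ to lie along $(x_0-y_0)/\rho$, so $|\nabla u_p(x_0)| \le W'(\rho)$. Strict monotonicity of $t\mapsto (1 + C_p t^{2(q-1)})\,t$ on $[0,\infty)$ then propagates this to
$$\bigl(1 + C_p|\nabla u_p|^{2(q-1)}(x_0)\bigr)\,|\nabla u_p(x_0)| \le G'\bigl(W'(\rho)^2\bigr)\,W'(\rho).$$
Substituting the explicit value above, using $\lambda_1(\theta_p)\le c$ from Lemma \ref{lemma.boundeig} and the elementary bound $(R^d-\rho^d)/\rho^{d-1} \le c\,\diam(\Om)\,(1+\diam(\Om)/\rho)^{d-1}$, and dividing by $\|u_p\|_{L^\infty(\Om)}$ yields \eqref{estimate.a.1} after rearranging the product on the left-hand side.

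The principal obstacle is the weak comparison principle for the quasilinear operator $u \mapsto -\divs(G'(|\nabla u|^2)\nabla u)$: the required strict monotonicity of the field $p\mapsto G'(|p|^2)\,p$ follows from convexity of $G$ with $G'(0)>0$, which is precisely what Lemma \ref{lemma:maxpri} relies on, but one must be careful to compare a sub- and a supersolution having the same trace on $\partial\Om$ but different right-hand sides. A secondary subtlety is the identification of $W'(\rho)$ with $|\nabla u_p(x_0)|$ through the tangency of $\partial\Om$ and $\partial B(y_0,\rho)$ at $x_0$ and the $C^{1,\alpha}$-regularity of $\partial\Om$, which together force $\nabla u_p(x_0)$ to be radial and justify pointwise evaluation at the boundary.
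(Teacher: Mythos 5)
Your proposal is correct and follows essentially the same route as the paper: the external ball condition yields the annulus $B(y_0,\rho+\diam\Om)\setminus\bar B(y_0,\rho)$, a radial supersolution of $-\divs(G'(|\nabla u|^2)\nabla u)=\lambda_1(\theta_p)\|u_p\|_{L^\infty(\Om)}$ is integrated explicitly, the comparison principle of Lemma \ref{lemma:maxpri} transfers the bound to $|\nabla u_p|(x_0)$, and Lemma \ref{lemma.boundeig} makes the constant uniform in $p$; your single-step comparison with a mixed Dirichlet--Neumann barrier is only a cosmetic variant of the paper's two-step chain $u_p\le\bar u\le U$ with the Dirichlet solution on the annulus and the monotonicity $\lambda_1(C_{R,\rho},\theta_p)\le\lambda_1(\theta_p)$. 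Note that what both arguments actually produce is a bound on the product $\bigl(1+C_p|\nabla u_p|^{2(q-1)}(x_0)\bigr)|\nabla u_p|(x_0)$, so your final ``rearranging'' shares exactly the same gloss as the paper's own last line in passing to the form \eqref{estimate.a.1}.
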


\begin{proof}Introduce the auxiliary function
$$G(t):=t+\frac{C_p}{q}t^q$$
and notice that it satisfies the assumptions of Lemma \ref{lemma:maxpri}. We can assume without loss of generality that the center of the external ball at $x_0$ is the origin (i.e., $y_0=0$) so that, setting $R:=\diam(\Om)$, we obtain the inclusion
$$\Om\subset B_{R+\rho}\setminus\bar{B}_\rho=:C_{R,\rho}.$$
Let $\tilde{u}$ be the solution of
$$-\divs\big(G'(|\nabla u|^2)\nabla u\big)=\lambda_1(\theta_p)\|u_p\|_{L^\infty(\Om)}$$
with $u\in W_0^{1,2q}(\Om)$ and $L^2$-norm equal to one, and let $U$ be the solution of
$$-\divs\big(G'(|\nabla u|^2)\nabla u\big)=\lambda_1(C_{R,\rho},\theta_p)\|u_p\|_{L^\infty(\Om)}$$
with $u\in W_0^{1,2q}(C_{R,\rho})$ and the same $L^2$-norm as above. Using $(i)$ and $(ii)$ of Lemma \ref{lemma:maxpri} we find that
$$u_p\le\bar{u}\le U$$
since $\Om$ is contained in the annulus $C_{R,\rho}$. The function $U$ is radially symmetric, and therefore we can rewrite the equation in polar coordinates:
\[\begin{cases}
-r^{1-d}\partial_r(r^{d-1}G(|U'|^2)U')=\lambda_1(C_{R,\rho},\theta_p)\|u_p\|_{L^\infty(\Om)}&\text{if }r\in(\rho,\rho+R),\\
U(\rho)=U(\rho+R)=0.
\end{cases}\]
Let $\rho_1$ be the point where $U$ attains its maximum value and integrate the previous equation to deduce that
\[\begin{split}
\rho^{d-1}(1+C_p|U'|^{2(q-1)})U'(\rho)
&=\lambda_1(C_{R,\rho},\theta_p)\int_\rho^{\rho_1}r^{d-1}\|u_p\|_{L^\infty(\Om)}\,dr\\
&\le\lambda_1(C_{R,\rho},\theta_p)\|u_p\|_{L^\infty(\Om)}\frac{R(R+\rho)^d}{d},
\end{split}\]
and next we estimate the left-hand side using that the radial derivative $\partial_r U$ is positive. Finally, the inclusion property of eigenvalues,
$$\Om\subset C_{R,\rho}\implies\lambda_1(C_{R,\rho},\theta_p)\le\lambda_1(\theta_p),$$
together with Lemma \ref{lemma.boundeig}, allows us to infer that \eqref{estimate.a.1} holds.
\end{proof}

If now $\Omega$ satisfies the uniform external ball condition, we can extend the estimate \eqref{estimate.a.1} to hold for $\|\nabla u_p\|_{L^\infty(\partial\Om)}$ provided that we replace $\rho$ with the largest possible radius.

\begin{lemma}\label{uniform.estimate.gradient}
Let $\Om\subset\R^2$ be a bounded open set with boundary in $C_{\mathrm{loc}}^{1,\alpha}$ for some $\alpha>0$. The following assertions hold:
\begin{enumerate}[label=\text{(\roman*)}]
\item If $\Om$ is convex, then
$$\frac{\left(1+C_p\|\nabla u_p\|_{L^\infty(\partial\Om)}^{2(q-1)}\right)}{\|u_p\|_{L^\infty(\Om)}}\le c\diam(\Om),$$
where $c$ is the constant given in Lemma \ref{lemma.boundeig}.
\item If $\Om$ satisfies the uniform external boundary condition with radius $\rho$, then
$$\frac{\left(1+C_p\|\nabla u_p\|_{L^\infty(\partial\Om)}^{2(q-1)}\right)}{\|u_p\|_{L^\infty(\Om)}}\le c\left(1+\frac{\diam(\Om)}{\rho}\right)^{d-1}\diam(\Om).$$
\end{enumerate}
\end{lemma}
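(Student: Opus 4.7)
The plan is to derive both estimates from the pointwise bound of Lemma \ref{lemma.a.1} by passing to the supremum over $\partial\Om$, either letting the external-ball radius tend to infinity (in the convex case) or exploiting its uniformity (in case (ii)). Under the regularity of $\partial\Om$ the function $u_p$ belongs to $C^1(\bar\Om)$, so $|\nabla u_p|$ is continuous on $\partial\Om$ and its $L^\infty$-norm there is attained at some boundary point. The crucial remark is that $t\mapsto 1+C_p t^{2(q-1)}$ is nondecreasing on $[0,+\infty)$, so taking the supremum over $x_0\in\partial\Om$ on the left-hand side of \eqref{estimate.a.1} is the same as replacing $|\nabla u_p|(x_0)$ by $\|\nabla u_p\|_{L^\infty(\partial\Om)}$.

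For part (ii), the uniform external ball condition says that the radius $\rho>0$ in Lemma \ref{lemma.a.1} can be chosen independently of $x_0\in\partial\Om$. Hence \eqref{estimate.a.1} holds with the same right-hand side at every boundary point; taking the supremum in $x_0$ and dividing by $\|u_p\|_{L^\infty(\Om)}$ yields the claimed estimate.

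For part (i), observe that a convex $\Om$ satisfies the external ball condition at every $x_0\in\partial\Om$ with arbitrary radius $\rho>0$: a supporting hyperplane at $x_0$ separates $\Om$ from a closed half-space $H$, and inside $H$ one can inscribe a ball of any radius tangent to $\partial H$ at $x_0$. Applying \eqref{estimate.a.1} to such $\rho$ and letting $\rho\to+\infty$, the factor $(1+\diam(\Om)/\rho)^{d-1}$ tends to $1$, giving the pointwise estimate
$$1+C_p|\nabla u_p(x_0)|^{2(q-1)}\le c\,\|u_p\|_{L^\infty(\Om)}\diam(\Om)$$
at every $x_0\in\partial\Om$; passing to the supremum then produces (i).

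The only substantive point to check is that the constant $c$ from Lemma \ref{lemma.a.1} is genuinely independent of $\rho$; this is so because $c$ enters only through $|\Om|$, $L$ and the $\rho$-free upper bound on $\lambda_1(\theta_p)$ supplied by Lemma \ref{lemma.boundeig}, so letting $\rho\to+\infty$ causes no degeneration. There is no real obstacle here: the lemma is essentially a packaging of the pointwise inequality of Lemma \ref{lemma.a.1} in a form uniform over $\partial\Om$, the convex case being obtained by exploiting the possibility of sending $\rho\to+\infty$.
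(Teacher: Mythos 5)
Your argument is correct and is exactly the route the paper intends: the paper states this lemma without proof, after remarking that the pointwise estimate \eqref{estimate.a.1} of Lemma \ref{lemma.a.1} extends to $\|\nabla u_p\|_{L^\infty(\partial\Om)}$ under the uniform external ball condition, and your treatment of the convex case by taking the external radius $\rho\to+\infty$ (legitimate since the constant $c=c(|\Om|,L)$ of Lemma \ref{lemma.a.1} is $\rho$-independent) is the intended reading of part (i).
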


The estimate obtained when $\Om$ is convex is more precise, but it is not enough to infer that the optimal density belongs to $L^\infty(\Om)$ through the method presented in this section. In any case, the $L^\infty$-norm of $u_p$ appears at the denominator: we will see later that this does not lead to any additional problem.

\subsection{Uniform estimate of $\theta_p$}

The next step is to find a uniform estimate for the $L^r$-norm of $\theta_p$; more precisely, we prove that
$$\|\theta_p\|_{L^r(\Om)}\le C,$$
where $C$ is a positive constant that depends on $r$ but not on $p$. It is worth remarking that eigenfunctions are bounded in $L^\infty$, so it is convenient to work with their $L^\infty$ norm only.

\begin{remark}\label{remark.fond}
Let $r\ge1$. Then
$$\|u_p\|_{L^r(\Om)}\le\|u_p\|_{L^\infty(\Om)}|\Om|^{1/r}\le\max\{|\Om|,1\}\|u_p\|_{L^\infty(\Om)},$$
which means that we can always find a uniform estimate of the $L^r$-norm using the $L^\infty$-norm. This will be particularly important in the uniform estimate of $\theta_p$.
\end{remark}

We are now ready to prove the a priori estimate for $\theta_p$. The key lemma below is due to De Pascale-Evans-Pratelli in \cite{paevpr} and the proof is adapted to the eigenvalue case in the spirit of \cite{buouve1}.

\begin{lemma}[De Pascale-Evans-Pratelli]\label{lemma.dep}
Let $\Om\subset\R^d$ be a bounded open set with smooth boundary and let $r\ge2$ be arbitrary. Let $G:[0,+\infty)\to [0,\infty)$ be a convex function with $G'(0)>0$ and let
$$u\in C^1(\bar{\Om})\cap H_{\mathrm{loc}}^2(\Om)$$
be the unique positive solution of
\be\label{eq.3.5}
-\divs(G'(|\nabla u|^2)\nabla u)=\lambda_1 u,
\ee
where $\lambda_1$ depends on $\Om$ and $G$ only, satisfying $u=0$ on $\partial\Om$ and with unitary $L^2$-norm. Then for every $\eps\in(0,1)$ we have the following estimate:
\[\begin{split}
\int_\Om|G'(|\nabla u|^2)|^r|\nabla u|^2\,dx\le
&3\eps\|G'(|\nabla u|^2)\|_{L^r(\Om)}^r+|\Om|\left(\frac{(r-1)^r}{\eps^{r-1}}+\frac{1}{\eps^{2r-1}}\right)\lambda_1^r\|u\|_{L^\infty(\Om)}^{2r}\\
&-\frac{(r-1)^2\|u\|_{L^\infty(\Om)}^2}{\eps}\int_{\partial\Om}HG'(|\nabla u|^2)^r|\nabla u|^2\,d\cH^{d-1},
\end{split}\]
where $H$ is the mean curvature of $\partial \Om$ with respect to the outer normal and $|\Om|$ is the volume of the set $\Om$.
\end{lemma}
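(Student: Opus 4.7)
The proof follows the Bernstein--Rellich scheme of De Pascale--Evans--Pratelli, adapted to the eigenvalue equation in the spirit of \cite{buouve1}. Setting $a(x) := G'(|\nabla u(x)|^2) \ge G'(0) > 0$, the PDE reads $-\divs(a\nabla u) = \lambda_1 u$, and the target quantity is $\int_\Om a^r |\nabla u|^2\, dx$.

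The first move is to test the equation against the natural multiplier $a^{r-1} u$. Since $u=0$ on $\partial\Om$ the boundary term vanishes, and integration by parts gives
\begin{equation*}
\int_\Om a^r |\nabla u|^2\, dx = \lambda_1 \int_\Om a^{r-1} u^2\, dx - (r-1)\int_\Om a^{r-1} u\, \nabla u \cdot \nabla a\, dx.
\end{equation*}
The first term on the right is bounded by $\lambda_1 \|u\|_\infty^2 \int_\Om a^{r-1}\,dx$; H\"older with $|\Om|^{1/r}$ followed by Young's inequality with conjugate exponents $r/(r-1)$ and $r$ yields a contribution of the form $\eps\|a\|_r^r + \eps^{-(r-1)}(r-1)^r |\Om|\lambda_1^r\|u\|_\infty^{2r}$, which accounts for the first interior error term in the statement.

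The heart of the argument is the mixed term $(r-1)\int_\Om a^{r-1} u \nabla u \cdot \nabla a$. I would differentiate the PDE componentwise to obtain, for each $k$, the identity $-\divs(u_k \nabla a + a\nabla u_k) = \lambda_1 u_k$, test against $a^{r-1} u_k$, and sum on $k$. Carrying out the integrations by parts produces three kinds of contributions: a non-negative Bochner term $\int_\Om a^r |D^2 u|^2$ which is discarded; further copies of the target and of the mixed term which are reabsorbed in the identity; and a boundary contribution. On $\partial\Om$, the Dirichlet condition forces $\nabla u = -|\nabla u|\nu$, so the Reilly decomposition $\Delta u|_{\partial\Om} = \partial_\nu^2 u + H\,\partial_\nu u$ converts the boundary integral precisely into the mean-curvature term $-(r-1)^2\eps^{-1}\|u\|_\infty^2\int_{\partial\Om} H a^r |\nabla u|^2\, d\cH^{d-1}$ appearing in the conclusion.

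Combining the two identities and absorbing each of the two interior cross terms generated by the differentiated equation through one further application of Young's inequality contributes another $2\eps\|a\|_r^r$ (bringing the total to $3\eps$) together with a correction of order $\eps^{-(2r-1)}|\Om|\lambda_1^r\|u\|_\infty^{2r}$, completing the stated bound. The main obstacle is the middle step: differentiating the degenerate nonlinear PDE, keeping the correct divergence structure during integration by parts, and correctly extracting the mean-curvature boundary contribution via the Reilly identity for a Dirichlet solution. The secondary, purely computational difficulty is the careful bookkeeping of the three applications of Young's inequality, which must produce exactly the exponents $(r-1)^r/\eps^{r-1}$, $1/\eps^{2r-1}$ and the coefficient $(r-1)^2/\eps$ of the curvature term.
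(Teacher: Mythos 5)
Your argument is essentially the paper's proof: the paper derives your second identity by testing the undifferentiated equation against $\divs(\sigma^{r-1}\nabla u)$ with $\sigma=G'(|\nabla u|^2)$ and integrating by parts twice, which yields exactly the same bulk terms as your componentwise differentiation, and the boundary contribution is treated identically via the decomposition $\Delta u = u_{\nu\nu}+Hu_\nu$ together with the Dirichlet condition, followed by the same Young-inequality bookkeeping. One small precision: besides the nonnegative term $\int_\Om \sigma^r\|\mathrm{Hess}(u)\|_2^2\,dx$, the cross term $r\int_\Om \sigma^{r-1}\nabla\sigma\cdot\mathrm{Hess}(u)\,\nabla u\,dx$ is not ``reabsorbed'' but discarded by sign, and this is precisely where the convexity of $G$ is used.
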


\begin{proof}
The assumption $\Om$ smooth implies that the solution $u$ to \eqref{eq.3.5} is smooth up to the boundary, that is, $u\in C^\infty(\bar{\Om})$. Set
$$\sigma:=G'(|\nabla u|^2),$$
and use $\sigma^{r-1}u\in H_0^1(\Om)$ as a test function for the equation \eqref{eq.3.5}. Using the integration by parts formula on the left-hand side, we find the identity
\be\label{eq.3.6}
\int_\Om\sigma^r|\nabla u|^2\,dx+(r-1)\int_\Om u\sigma^{r-1}\nabla u\cdot\nabla\sigma\,dx=\lambda_1 \int_\Om \sigma^{r-1}|u|^2\,dx.
\ee
The right-hand side of the equality can be easily estimated via the H\"older inequality and Remark \ref{remark.fond} as:
$$\lambda_1\int_\Om\sigma^{r-1}|u|^2\,dx\le\lambda_1|\Om|^{1/r}\|u\|_{L^\infty(\Om)}^2\|\sigma\|_{L^r(\Om)}^{r-1}.$$
To estimate the integral $\int_\Om u\sigma^{r-1}\nabla u\cdot\nabla\sigma\,dx$ we can use $\varphi := \divs (\sigma^{r-1}\nabla u)$ as test function for the equation \eqref{eq.3.5}; it turns out that
\be\label{eq.3.7}
\begin{split}
\int_\Om\divs(\sigma\nabla u)&\divs(\sigma^{r-1}\nabla u)\,dx
=-\lambda_1\int_\Om\divs(\sigma^{r-2}\sigma\nabla u)u\,dx\\
&=-\lambda_1\int_\Om\Big(\sigma^{r-2}\divs(\sigma\nabla u)u+(r-2)\sigma^{r-2}(\nabla u\cdot\nabla\sigma)u\Big)\,dx\\
&\le\lambda_1\int_\Om\Big(\lambda_1\sigma^{r-2}|u|^2+(r-2)\sigma^{r-2}|\nabla u\cdot\nabla\sigma||u|\Big)\,dx.
\end{split}\ee
Integrating by parts twice the left-hand side leads to the following chain of equalities,
\[\begin{split}
\int_\Om\divs
&(\sigma \nabla u)\divs(\sigma^{r-1}\nabla u)\,dx\\
&=-\int_\Om\sigma \nabla u \cdot \nabla \left[ \divs (\sigma^{r-1} \nabla u)\right]\,dx + \int_{\partial \Om}\sigma \divs (\sigma^{r-1}\nabla u)u_\nu\,d\cH^{d-1}\\
&=\int_\Om (\sigma u_i)_j (\sigma^{r-1} u_j)_i\,dx+ \int_{\partial \Om} \sigma^r \left( u_\nu \Delta u-u_i u_{ij} \nu_j \right) \,d\cH^{d-1}\\
&=\int_\Om (\sigma u_i)_j (\sigma^{r-1} u_j)_i\,dx+ \int_{\partial \Om} \sigma^r \left( \Delta u-u_{\nu\nu} \right) u_\nu \,d\cH^{d-1}\\
&=\int_\Om (\sigma u_i)_j (\sigma^{r-1} u_j)_i \,dx+\int_{\partial \Om} \sigma^r u_\nu( \Delta u - u_{\nu \nu})\,d\cH^{d-1},
\end{split}\]
where $u_\nu=\nabla u\cdot\nu$ and $u_{\nu \nu} = \mathrm{Hess}(u) \nu \cdot \nu$ are, respectively, the first-order and second-order derivatives in the direction of the exterior normal $\nu$ to $\partial\Om$ and we introduce the short notation $u_j=\partial_j u$. It follows that
\[\begin{split}
\int_\Om \divs (\sigma \nabla u)& \divs (\sigma^{r-1}\nabla u) \,dx
=\int_\Om\sigma^r\|\mathrm{Hess}(u)\|_2^2\,dx+(r-1)\int_\Om\sigma^{r-2}|\nabla u\cdot\nabla\sigma|^2\,dx\\
&+r\int_\Om\sigma^{r-1}\sigma_j u_i u_{i j}\,dx+\int_{\partial\Om}\sigma^r u_\nu(\Delta u-u_{\nu\nu})\,d\cH^{d-1}
\end{split}\]
where the $2$-norm associated to the Hessian matrix is given by
$$\|\mathrm{Hess}(u)\|_2^2:=\sum_{i,j=1}^d u_{ij}^2.$$
Since $u$ is smooth up to the boundary of $\Om$, it is easy to verify that the Laplace operator can be decomposed as
$$\Delta u = u_{\nu\nu} + H u_\nu(x) \quad \text{for all $x \in \partial \Om$},$$
where $H$ denotes the mean curvature of $\partial\Om$. This immediately implies the estimate
\be \label{eq.3.10.a}
\begin{split} (r-1)\int_\Om\sigma^{r-2}|\nabla u\cdot\nabla\sigma|^2\,dx+ & \int_{\partial\Om}H\sigma^r|\nabla u|^2\,d\cH^{d-1}
\\ & \le\int_\Om \divs (\sigma \nabla u) \divs (\sigma^{r-1}\nabla u) \,dx.
\end{split}
\ee
In fact, it is easy to verify that
$$r\int_\Om\sigma^{r-1}\sigma_j u_i u_{ij}\,dx+\int_\Om\sigma^r\|\mathrm{Hess}(u)\|_2^2 \,dx\ge0$$
since the $2$-norm is positive by definition and $\sigma_j u_i u_{ij} \geq 0$ follows from the convexity assumption on $G$. We now plug \eqref{eq.3.10.a} into \eqref{eq.3.7} and use the H\"{o}lder inequality with $p=r/2$ and $q=r/(r-2)$ to obtain the following estimate:
\[\begin{split}
\int_\Om\sigma^{r-2}|\nabla u \cdot \nabla \sigma|^2\,dx
&\le\lambda_1^2\int_\Om|u|^2\sigma^{r-2}\,dx-\int_{\partial\Om}H\sigma^r|\nabla u|^2\,d\cH^{d-1}\\
&\le \lambda_1^2 \|u\|_{L^r(\Om)}^2\|\sigma\|_{L^r(\Om)}^{r-2}-\int_{\partial \Om}H\sigma^r|\nabla u|^2\,d\cH^{d-1}.
\end{split}\]
The conclusion follows by combining the inequalities discovered so far with the identity \eqref{eq.3.6} and applying repeatedly the Young inequality
$$A^\alpha B^\beta\le\eps\alpha A+\eps^{\alpha/\beta}\beta B,$$
which is valid for $\alpha + \beta = 1$. In particular, it turns out that
\[\begin{split}
\int_\Om\sigma^r|\nabla u|^2\,dx
&\le\|u\|_{L^\infty(\Om)}\int_\Om\sigma^{r-1}|\nabla u\cdot\nabla\sigma|\,dx+\lambda_1\|u\|_{L^\infty(\Om)}^2|\Om|^{1/r}\|\sigma\|_{L^r(\Om)}^{r-1}\\
&\le\eps\|\sigma\|_{L^r(\Om)}^r+\frac{(r-1)^2\|u\|_{L^\infty(\Om)}^2}{\eps}\int_\Om\sigma^{r-2}|\nabla u\cdot\nabla\sigma|^2\,dx\\
&\quad+\lambda_1\|u\|_{L^\infty(\Om)}^2 |\Om|^{\frac{1}{r}} \|\sigma\|_{L^r(\Om)}^{r-1}\\
&\le\eps\|\sigma\|_{L^r(\Om)}^r + \lambda_1^2 \frac{(r-1)^2 \|u\|_{L^\infty(\Om)}^2}{\eps} \|u\|_{L^r(\Om)}^2 \|\sigma\|_{L^r(\Om)}^{r-2}\\
&\quad+\lambda_1\|u\|_{L^\infty(\Om)}^2 |\Om|^{\frac{1}{r}} \|\sigma\|_{L^r(\Om)}^{r-1}-\frac{(r-1)^2\|u\|_{L^\infty(\Om)}^2}{\eps}\int_{\partial\Om}H\sigma^r|\nabla u|^2 \,d\cH^{d-1}\\
&\le3\eps\|\sigma\|_{L^r(\Om)}^r+|\Om|\left(\frac{(r-1)^r}{\eps^{r-1}}+\frac{1}{\eps^{2r-1}}\right)\lambda_1^r \|u\|_{L^\infty(\Om)}^{2r}\\
&\quad-\frac{(r-1)^2\|u\|_{L^\infty(\Om)}^2}{\eps}\int_{\partial\Om}H\sigma^r|\nabla u|^2\,d\cH^{d-1},
\end{split}\]
and this concludes the proof of the lemma.
\end{proof}

The De Pascale-Evans-Pratelli lemma holds for domains $\Om$ with smooth boundary, so before passing to the uniform estimate of $\theta_p$ we need to present an approximation argument that allows us to use domains with Lipschitz boundary that only satisfy the uniform external ball condition.

\begin{lemma}\label{lemma.r.3.9}
Let $\Om\subset\R^2$ be a bounded open set satisfying the uniform external ball condition and let $\Om_n$ be a sequence of open sets, with $|\Om_n|<\infty$ and such that
$$\Om\subseteq\Om_n\qquad\text{and}\qquad|\Om_n\setminus\Om|\to0\hbox{ as }n\to\infty.$$
Fix $p\in[1,\infty)$ and let $u_p$ be the minimizer of $F_p$ on $\Om$ and $u_p^n$ the minimizers of $F_p$ on $\Om_n$, all positive with fixed $L^2$-norm equal to one and extended by zero outside $\Om$ and $\Om_n$ respectively. Then
$$u_p^n\to u_p$$
strongly in both $H^1(\R^2)$ and $W^{1,2q}(\R^2)$.
\end{lemma}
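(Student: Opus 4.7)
The plan is to extract a weak limit of $(u_p^n)$ along a subsequence, identify it with $u_p$ via the uniqueness of the positive minimizer of $F_p$ with unit $L^2$-norm, and then upgrade weak convergence to strong convergence by establishing convergence of the gradient norms in $L^2$ and $L^{2q}$. Throughout, I denote by $F_p^n$ the analog of $F_p$ with $\Om$ replaced by $\Om_n$.

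First I would secure uniform bounds. Since $\Om\subseteq\Om_n$, the extension of $u_p$ by zero belongs to $W_0^{1,2q}(\Om_n)$, so that $F_p^n(u_p^n)\le F_p^n(u_p)=F_p(u_p)=:\Lambda<\infty$. With $\|u_p^n\|_{L^2}=1$ this yields uniform bounds on $\|\nabla u_p^n\|_{L^2(\Om_n)}$ and $\|\nabla u_p^n\|_{L^{2q}(\Om_n)}$, so $(u_p^n)$ is bounded in $W^{1,2q}(\R^2)$ after zero-extension. A subsequence therefore converges weakly in $W^{1,2q}(\R^2)$ and, by Rellich--Kondrachov, strongly in $L^2(\R^2)$ to a nonnegative limit $u$ with $\|u\|_{L^2}=1$. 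Moreover, since $\spt u_p^n\subseteq\bar\Om_n$ and $|\Om_n\setminus\Om|\to 0$, the limit $u$ vanishes a.e.\ outside $\bar\Om$; the uniform external ball condition then forces $u\in W_0^{1,2q}(\Om)$.

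Now weak lower semicontinuity of each summand of $F_p$ gives
$$F_p(u)\le\liminf_{n\to\infty} F_p^n(u_p^n)\le\limsup_{n\to\infty} F_p^n(u_p^n)\le F_p(u_p),$$
and by the uniqueness of the positive $L^2$-normalized minimizer (Proposition \ref{proposition:3.2}), $u=u_p$ and the chain collapses to equality. Since both $\int|\nabla u_p^n|^2\,dx$ and $\||\nabla u_p^n|^2\|_{L^q}$ are separately weakly lower semicontinuous and nonnegative, equality of the sum forces equality of each summand, yielding $\|\nabla u_p^n\|_{L^2}\to\|\nabla u_p\|_{L^2}$ and $\|\nabla u_p^n\|_{L^{2q}}\to\|\nabla u_p\|_{L^{2q}}$. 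Uniform convexity of $L^2(\R^2)$ and $L^{2q}(\R^2)$ then upgrades the weak convergence of the gradients to strong convergence; combined with strong $L^2$- and (via the uniform $W^{1,2q}$-bound and Rellich) $L^{2q}$-convergence of $u_p^n$ itself, this gives strong convergence in $H^1(\R^2)$ and $W^{1,2q}(\R^2)$, which a standard subsequence argument extends to the full sequence.

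The main obstacle I anticipate is the admissibility step: although the weak limit $u$ is evidently supported in $\bar\Om$, one must also show that its trace on $\partial\Om$ vanishes, i.e.\ that $u\in W_0^{1,2q}(\Om)$. This is precisely where the uniform external ball condition intervenes, by ruling out that mass concentrates in a thin boundary layer of $\partial\Om$ carrying a nonzero residual trace in the limit.
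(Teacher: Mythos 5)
Your proposal is sound in outline and, in substance, it supplies the argument the paper does not write out: the paper's ``proof'' of Lemma \ref{lemma.r.3.9} is a one-line deferral to \cite[Lemma 3.9]{buouve1} for the energy problem, and the scheme you describe --- comparison (the zero-extension of $u_p$ is admissible on $\Om_n$ because $\Om\subseteq\Om_n$, giving uniform bounds), weak $W^{1,2q}$ compactness, identification of the limit via lower semicontinuity together with uniqueness of the positive normalized minimizer from Proposition \ref{proposition:3.2}, and finally convergence of each summand plus Radon--Riesz in the uniformly convex spaces $L^2$ and $L^{2q}$ --- is exactly that argument transplanted to the Rayleigh-quotient setting, with the normalization $\|u_p^n\|_{L^2}=1$ playing the role of the linear term in the energy.

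Two points need attention. First, the admissibility step you flag as the main obstacle should be closed explicitly, and for $p>1$ it can be: since $2q>2=d$, the weak limit $u$ has a continuous representative; the vanishing $u=0$ a.e.\ on $\R^2\setminus\Om$ follows from $\int_{\Om_n\setminus\Om}|u_p^n|^2\,dx\le\|u_p^n\|_{L^{2q}}^2|\Om_n\setminus\Om|^{1-1/q}\to0$ (note this same estimate is needed to justify your Rellich step, because the $\Om_n$ are not assumed uniformly bounded, only of vanishing excess measure); then at every $x_0\in\partial\Om$ the external ball $B(y_0,\rho)\subset\R^2\setminus\Om$ has positive measure, so by continuity $u(x_0)=0$, hence $u\equiv0$ everywhere on $\R^2\setminus\Om$, and the characterization of $W_0^{1,r}(\Om)$ as the $W^{1,r}(\R^2)$ functions vanishing quasi-everywhere on the complement (quasi-everywhere meaning everywhere when $r>d$) yields $u\in W_0^{1,2q}(\Om)$. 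Some such argument is genuinely necessary: without the external ball condition the conclusion fails (e.g.\ a slit disk, where $|\Om_n\setminus\Om|=0$ but the minimizers on $\Om_n$ do not converge to the one on $\Om$). Second, your final step uses uniform convexity of $L^{2q}$, hence requires $2q<\infty$, i.e.\ $p>1$; for the endpoint $p=1$, where $2q=\infty$, the Radon--Riesz argument is unavailable and strong $W^{1,\infty}$ convergence would need a different (and far more delicate) justification. Since the lemma is only invoked for $p\in(1,1+\delta)$ in the proof of Proposition \ref{proposition.3.13} this does not affect the paper, but as written your proof covers only $p>1$, whereas the statement nominally allows $p=1$.
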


This is proved in \cite[Lemma 3.9]{buouve1} for the energy problem, but the same arguments apply to the eigenvalue case. We finally have all the ingredients we need to obtain a uniform estimate of $\theta_p$:

\begin{proposition}\label{proposition.3.13}
Let $\Om\subset\R^2$ be a set of finite perimeter satisfying the uniform external ball condition with radius $R$. For every $r\ge d$, there are constants
$$\delta(\Om):=\delta\qquad\text{and}\qquad C(r,d,\mathrm{Per}(\Om),\lambda_1,\diam(\Om),R,\|u_1\|_{L^\infty(\Om)})= C,$$
satisfying the inequality
\be\label{uniform.estimate}
\|\theta_p\|_{L^r(\Om)} \le C \qquad\text{for all }p\in(1,1+\delta).
\ee 
\end{proposition}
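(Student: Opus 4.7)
The starting point is the identity
$$m\,\theta_p = C_p|\nabla u_p|^{2(q-1)} = G'(|\nabla u_p|^2)-1,$$
where $G(t)=t+(C_p/q)t^q$ is the convex function that makes $u_p$ solve the quasilinear eigenvalue equation \eqref{pdeup}. Since $\|\theta_p\|_{L^r(\Om)}\le m^{-1}\|G'(|\nabla u_p|^2)\|_{L^r(\Om)}+m^{-1}|\Om|^{1/r}$, the goal reduces to proving a uniform $L^r$-estimate for $\sigma_p:=G'(|\nabla u_p|^2)$. The plan is to apply the De Pascale--Evans--Pratelli inequality (Lemma \ref{lemma.dep}) to $u_p$ with exponent $r$, bound every term on its right-hand side, and absorb the $\eps\|\sigma_p\|_{L^r}^r$ contribution back into the left-hand side.

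I would carry this out in four steps. First, because Lemma \ref{lemma.dep} requires a smooth boundary, I approximate $\Om$ from outside by smooth open sets $\Om_n$ satisfying the uniform external ball condition with radius comparable to $R$ and perimeter comparable to $\mathrm{Per}(\Om)$, apply DEP to the corresponding minimizers $u_p^n$, and pass to the limit $n\to\infty$ using the strong $W^{1,2q}(\R^2)$ convergence provided by Lemma \ref{lemma.r.3.9}. Second, the negative boundary integral in DEP is controlled via the bound $H\ge -1/R$, which reduces it to a multiple of $(\eps R)^{-1}(r-1)^2\|u_p\|_{L^\infty(\Om)}^2\int_{\partial\Om}\sigma_p^r|\nabla u_p|^2\,d\cH^1$; by Lemma \ref{uniform.estimate.gradient} the integrand is pointwise majorized on $\partial\Om$ by a power of $\|u_p\|_{L^\infty(\Om)}\diam(\Om)(1+\diam(\Om)/R)$, and integration gives an extra factor $\mathrm{Per}(\Om)$. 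Third, to convert the resulting bound on $\int\sigma_p^r|\nabla u_p|^2$ into one on $\|\sigma_p\|_{L^r(\Om)}^r$, I split $\Om$ into $\{|\nabla u_p|\le 1\}$ and its complement: on the latter one has $\int\sigma_p^r\le\int\sigma_p^r|\nabla u_p|^2$, while on the former $\sigma_p\le 1+C_p$, giving a contribution bounded by $(1+C_p)^r|\Om|$. Fourth, choosing $\eps>0$ small enough depending on $r$ makes the coefficient of $\|\sigma_p\|_{L^r(\Om)}^r$ on the right strictly less than one, and the estimate closes.

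The main obstacle I anticipate is controlling two quantities uniformly for $p$ in a right neighborhood of $1$: $\|u_p\|_{L^\infty(\Om)}$ and $C_p$. Proposition \ref{proposition:3.2} combined with the strong $H^1$-convergence $u_p\to u_1$ of Proposition \ref{proposition.3.6} and elliptic regularity for \eqref{pdeup} should yield $\|u_p\|_{L^\infty(\Om)}\to\|u_1\|_{L^\infty(\Om)}$, which is exactly why $\|u_1\|_{L^\infty(\Om)}$ appears in the statement. For $C_p$, the identity $C_p=mL\bigl(\int_\Om|\nabla u_p|^{2q}\,dx\bigr)^{-1/p}$ together with the asymptotic $\|\nabla u_p\|_{L^{2q}(\Om)}\to\|\nabla u_1\|_{L^\infty(\Om)}$ from Proposition \ref{proposition.3.6} gives $\limsup_{p\to 1^+}C_p<\infty$, but the convergence of $(\int|\nabla u_p|^{2q})^{1/p}$ is only effective once $p$ lies in a sufficiently small interval $(1,1+\delta)$ with $\delta$ depending only on $\Om$; this is precisely the threshold in the statement. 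Ensuring that this $\delta$ is independent of $r$, and tracking the final constant so that it depends on exactly the advertised data $(r,d,\mathrm{Per}(\Om),\lambda_1,\diam(\Om),R,\|u_1\|_{L^\infty(\Om)})$, is the most delicate bookkeeping of the argument.
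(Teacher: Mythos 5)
Your overall architecture (reduce to an $L^r$ bound on $\sigma_p=G'(|\nabla u_p|^2)$, apply De Pascale--Evans--Pratelli, control the boundary term via $H\ge-1/R$ and Lemma \ref{uniform.estimate.gradient}, approximate $\Om$ by smooth sets through Lemma \ref{lemma.r.3.9} with comparable perimeter and diameter, and use $u_p\to u_1$ to control $\|u_p\|_{L^\infty(\Om)}$ for $p$ near $1$) matches the paper. But your third step contains a genuine gap: you split $\Om$ at the threshold $|\nabla u_p|=1$, which forces the bound $\sigma_p\le 1+C_p$ on the small-gradient set and hence a contribution $(1+C_p)^r|\Om|$, and you then claim $\limsup_{p\to1^+}C_p<\infty$ from $\|\nabla u_p\|_{L^{2q}(\Om)}\to\|\nabla u_1\|_{L^\infty(\Om)}$. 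That claim is false in general: $C_p=mL\bigl(\int_\Om|\nabla u_p|^{2q}\,dx\bigr)^{-1/p}=mL\,\|\nabla u_p\|_{L^{2q}(\Om)}^{-2/(p-1)}$, and since the exponent $2/(p-1)\to\infty$, the convergence of the norm gives $C_p\to\infty$ whenever $\|\nabla u_1\|_{L^\infty(\Om)}<1$ (which happens, e.g., for large domains; nothing in the problem forces $\|\nabla u_1\|_{L^\infty}\ge1$). Moreover the region where $\theta_p$ concentrates is where $|\nabla u_p|$ is near its ($L^{2q}$-)maximal value, which may well lie inside $\{|\nabla u_p|\le1\}$ in that regime, so the $(1+C_p)^r$ term cannot be discarded and your estimate is not uniform in $p$.

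The fix is exactly the paper's choice of threshold: split at $|\nabla u_p|=\|\nabla u_p\|_{L^{2q}(\Om)}$ instead of $1$. Since $2(q-1)=2q/p$, the normalization cancels exactly and $C_p|\nabla u_p|^{2(q-1)}\le C_p\|\nabla u_p\|_{L^{2q}(\Om)}^{2(q-1)}=mL$ on the small-gradient set, i.e.\ $\sigma_p\le 1+mL$ there, a bound independent of $p$ with no need to control $C_p$ at all; on the complementary set one uses $\int_B\sigma_p^r\,dx\le\|\nabla u_p\|_{L^{2q}(\Om)}^{-2}\int_B\sigma_p^r|\nabla u_p|^2\,dx$. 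Correspondingly, the absorption parameter cannot be chosen merely ``small depending on $r$'': it must scale like $\eps=\tfrac16\|\nabla u_p\|_{L^{2q}(\Om)}^2$, and the resulting constants stay uniform because $\|\nabla u_p\|_{L^{2q}(\Om)}$ is bounded away from zero for $p\in(1,1+\delta)$, as it converges to $\|\nabla u_1\|_{L^\infty(\Om)}>0$ by Proposition \ref{proposition.3.6}(b). With that replacement (and the same boundary splitting into the two regions when estimating the $\int_{\partial\Om}\sigma_p^r|\nabla u_p|^2\,d\cH^{d-1}$ term), the rest of your argument goes through as in the paper.
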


\begin{proof}We first suppose that $\Om$ is smooth. Set
$$ G_p(t) := t + \frac{C_p}{q} t^q. $$
and notice that $u_p$ is the minimizer of the functional
$$ H_0^1(\Om)\ni u \longmapsto \int_\Om G_p(|\nabla u|^2) \,dx $$
with $L^2$-norm equal to one. It is easy to verify that $\theta_p$ is the derivative, that is,
$$ \theta_p = G'_p(|\nabla u_p|^2)-1, $$
and, since $H_{\partial\Om}\ge-1/R$ by smoothness of $\Om$, it follows from De Pascale-Evans-Pratelli (Lemma \ref{lemma.dep}) that
\[\begin{split}
\int_\Om \sigma_p^r |\nabla u_p|^2 \,dx\le
&3\eps\|\sigma_p\|_{L^r(\Om)}^r + |\Om| \left( \frac{(r-1)^r}{\eps^{r-1}}+\eps^{1 - 2r} \right)\lambda_1(G_p)^r \|u_p\|_{L^\infty(\Om)}^{2r}\\
&-\frac{(r-1)^2\|u_p\|_{L^\infty(\Om)}^2}{\eps R} \int_{\partial_\Om} \sigma_p^r |\nabla u_p|^2\,d\cH^{d-1}.
\end{split}\]
It follows from the definition of $G_p$ that $\lambda_1$ is equal to $\lambda_1(\theta_p)$, which is uniformly bounded (Lemma \ref{lemma.boundeig}) by a positive constant $\Lambda$. We Denote by $S$ and $B$ the sets
\[\begin{split}
&S=\big\{x\in\bar{\Om}\ :\ |\nabla u_p(x)|\le\|\nabla u_p\|_{L^{2q}(\Om)}\big\},\\
&B=\big\{x\in\bar{\Om}\ :\ |\nabla u_p(x)|>\|\nabla u_p\|_{L^{2q}(\Om)}\big\},
\end{split}\]
and notice that
$$\sigma_p:=G_p(|\nabla u_p|^2)\le1 + mL\qquad\hbox{on }S.$$
We now estimate separately the three terms on the right-hand side of the inequality above. The first one gives
\[\begin{split}
\|\sigma_p \|_{L^r(\Om)}^r
&=\|\sigma_p \|_{L^r(S)}^r+\|\sigma_p\|_{L^r(B)}^r\\
&\le(1+mL)^r|\Om|+\|\nabla u_p\|_{L^{2q}(\Om)}^{-2}\int_B\sigma_p^r|\nabla u_p|^2\,dx.
\end{split}\]
For the third term we can use the estimate $(ii)$ given in Lemma \ref{uniform.estimate.gradient} to obtain
\[\begin{split}
\int_{\partial \Om}\sigma_p^r|\nabla u_p|^2&\,d\cH^{d-1}=\int_{S \cap \partial \Om} \sigma_p^r|\nabla u_p|^2\,d\cH^{d-1} + \int_{B \cap \partial \Om} \sigma_p^r|\nabla u_p|^2\,d\cH^{d-1}\\
\le&(1+mL)^{r-2} \int_{S \cap \partial \Om} \sigma_p^2 |\nabla u_p|^2 \,d\cH^{d-1}\\
&+\|\nabla u_p\|_{L^{2q}(\Om)}^{2-r}\int_{B\cap\partial\Om}\sigma_p^r|\nabla u_p|^r \,d\cH^{d-1}\\
\le&c^2 (1+mL)^{r-2} \left( 1 + \frac{\diam(\Om)}{\rho} \right)^{2(d-1)} \mathrm{Per}(\Om) \diam^2(\Om) \|u_p\|_{L^\infty(\Om)}^2\\
&+c^r\|\nabla u_p\|_{L^{2q}(\Om)}^{2-r} \left( 1 + \frac{\diam(\Om)}{\rho} \right)^{r(d-1)}\mathrm{Per}(\Om) \diam^r(\Om) \|u_p\|_{L^\infty(\Om)}^r.
\end{split}\]
Now take $\eps =\frac{1}{6} \| \nabla u_p\|_{L^{2q}(\Om)}^2$ in the initial inequality and rearrange the terms in such a way that the following holds:
\[\begin{split}
\frac{1}{2} \int_\Om \sigma_p^r |\nabla u_p|^2\,dx\le
&\frac{1}{2} \|\nabla u_p\|_{L^{2q}(\Om)}^2 (1 + mL)^r |\Om|\\
&+6^{2r-1}|\Om| \left( \frac{(r-1)^r}{\| \nabla u_p \|_{L^{2q}(\Om)}^{2r-2}}+ \frac{1}{\|\nabla u_p\|_{L^{2q}(\Om)}^{4r-2}}\right) \Lambda^r\|u_p\|_{L^\infty(\Om)}^{2r}\\
&+\frac{6(r-1)^2 \|u_p\|_{L^\infty(\Om)}^2}{\|\nabla u_p\|_{L^{2q}(\Om)}^2 R}\Big[ (1 + mL)^{r-2} C_2(\Om) \|u\|_{L^\infty(\Om)}^2\\
&+C_r(\Om) \|\nabla u_p\|_{L^{2q}(\Om)}^{2-r}\|u_p\|_{L^\infty(\Om)}^r\Big],
\end{split} \]
where
$$ C_s(\Om):=c^s \left( 1 + \frac{\diam(\Om)}{R} \right)^{s(d-1)} \mathrm{Per}(\Om) \diam^s(\Om). $$
Now notice that the same inequality holds for any $\Om$ as in the statement since we can approximate it by a sequence of smooth sets $\Om_n$ satisfying the assumptions of Lemma \ref{lemma.r.3.9} and also
$$\mathrm{Per}(\Om_n) \le 2\, \mathrm{Per}(\Omega) \qquad \text{and} \qquad \diam(\Omega_n) \le 2\, \diam (\Omega). $$
Furthermore, since $u_1$ is the limit of $u_p$, we can always find a small positive number $\delta > 0$ such that the following holds:
$$\frac{1}{2} \|u_1\|_{L^\infty(\Om)} \le \|u_p\|_{L^\infty(\Om)} \le \frac{3}{2} \| u_1 \|_{L^\infty(\Om)} \qquad\text{for all }p\in(1,1+\delta_1).$$
Then the estimate above shows that
$$\|\sigma_p\|_{L^r(\Om)}\le C\qquad\text{for all }p\in(1,1+\delta).$$
Finally, from the inequality $\int_\Om\theta_p^r\,dx\le\int_\Om\sigma_p^r\,dx$ and applying once again the estimate
$$ \int_\Om\sigma_p^r\,dx \le (1+mL)^r |\Om| + \| \nabla u_p \|_{L^{2q}(\Om)}^{-2} \int_B \sigma_p^r |\nabla u_p|^2 \,dx.$$
the conclusion follows 
\end{proof}

\begin{remark}\label{remark.fond2} The estimate \eqref{uniform.estimate} is not uniform with respect to $r$ since the constant $C$ depends on $r$ and, while it is true that we can write
$$ \|u_1\|_{L^r(\Om)} \le (1 +|\Omega|) \|u_1\|_{L^\infty(\Om)}, $$
it is also easy to show that
$$ \lim_{r \to \infty} C(r) = \infty.$$
The reason is that there is a term multiplied by a positive constant that is linear with respect to $r$, namely
$$C(r)\simeq\left[ 6^{2r-1}|\Om|\frac{\Lambda^r(r-1)^r}{\|\nabla u_p\|_{L^{2q}(\Om)}^{2r-2}} \|u_p\|_{L^\infty(\Om)}^{2r}\right]^{\frac{1}{r}}\simeq\Lambda r\qquad\text{as }r\to\infty.$$
Thus, even if we assume that $\Om$ is convex, we cannot get rid of this term because it does not come out of the boundary part. In Section \ref{section:aa}, we show an alternative approach to the problem that allows us to achieve $r = \infty$ when $\Om$ is convex, but it requires regularity results in optimal transport theory.
\end{remark}

\subsection{Proof of Theorem \ref{thopt1}: $L^p$-regularity for $1 \leq p < \infty$}
\label{subsec:sproofrec}

We are now ready to use all the results we collected so far to give a proof of the main theorem except for ({\romannumeral 1}) that will be proved in the next section.

\begin{proof}[Proof of Theorem \ref{thopt1}] For $p > 1$, let $u_p \in W_0^{1, 2q}(\Om)$ be the positive minimizer of $F_p$ with fixed $L^2$-norm (equal to one) and let $\theta_p$ be given by
$$ \theta_p(x) = L \left[|\nabla u_p|^{2(q-1)}(x) \left(\int_\Om |\nabla u_p|^{2q} \,dx\right)^{- \frac{1}{p}} \right]. $$
Since $\theta_p$ is admissible, using Proposition \ref{proposition.3.13} we can find a constant $C > 0$ such that for $p > 1$ small enough we have
$$m\|\theta_p\|_{L^p(\Om)} = mL \qquad \text{and} \qquad \| \theta_p \|_{L^r(\Om)} \le C $$
for some $r \geq d$. It follows that $\theta_p$ is uniformly bounded in $L^2(\Om)$ and hence, up to subsequences, it converges weakly to a nonnegative function $\bar{\theta} \in L^2(\Om)$. Since
$$\int_\Om\bar{\theta}\,dx=\lim_{p\to 1^+}\int_\Om \theta_p\,dx\le\liminf_{p\to1^+}\|\theta_p\|_{L^p(\Om)}|\Om|^{1/q}=L,$$
we easily infer that $\bar{\theta}$ is admissible. On the other hand, Proposition \ref{proposition.3.6} asserts that $u_p$ converges strongly in $H_0^1(\Om)$ to the minimum $u_1$ of $F_1$, which means that
$$ \int (1+m \theta_p) \nabla u_p \cdot \nabla \varphi\,dx\to\int_\Om (1+ m \bar{\theta}) \nabla u_1 \cdot \nabla \varphi\,dx\qquad\hbox{as }p\to1^+ $$
for every $\varphi \in C_c^\infty(\Om)$. Moreover, it is easy to check that
$$ \lambda_1(\theta_p) \int_\Om u_p \varphi\,dx\to\lambda_1(\bar{\theta}) \int_\Om u_1 \varphi\,dx\qquad\hbox{as }p\to1^+,$$
which implies that $u_1$ is a solution of the equation
$$ -\divs((1+\bar{\theta}) \nabla u_1) = \lambda_1(\bar{\theta}) u_1 $$
for $x \in \Om$, with Dirichlet boundary condition on $\partial \Om$. An application of the integration by parts formula shows that
$$ E(\bar{\theta}, u_1) := \int_\Om (1+ \bar{\theta}) |\nabla u_1|^2\,dx= \lambda(\bar{\theta}) \underbrace{\int_\Om |u_1|^2\,dx}_{= 1}. $$
The strong converge of $u_p$ to $u_1$ in $L^2(\Om)$ and Proposition \ref{proposition.3.6} implies that
\[\begin{split}
E(\bar{\theta}, u_1)
&=\lim_{p\to1^+}\lambda(\theta_p)\\
&=\lim_{p\to1^+}E(\theta_p,u_p)=\lim_{p\to1^+}F_p(u_p)=F_1(u_1),
\end{split}\]
which means that
$$ E(\bar{\theta},u_1)=\min_{u \in H_0^1(\Om) \setminus \{0\}} F_1(u). $$
Finally the general min-max inequality shows that
\[ \begin{split} \sup_{\theta \in\sA_L} \lambda_1(\theta) & = \sup_{\theta \in\sA_L} \min_{u \in H_0^1(\Om)} E(\theta, u) \\
& \le \min_{u \in H_0^1(\Om)} \sup_{\theta \in\sA_L} E(\theta, u) = \min_{u \in H_0^1(\Om)} F_1(u) = E(\bar{\theta}, u_1), \end{split} \]
and this is enough to conclude that $\bar{\theta}$ is a solution of the maximization problem \eqref{relpb1} since it is an admissible competitor.

The regularity of the optimal density $\bar{\theta}$ follows from the fact that $\bar{\theta} \in L^r(\Om)$ for $r$ arbitrarily large (since $\Om$ is a bounded-volume set). It is now trivial to show that $\bar \theta$ is equal to zero almost everywhere on the set
$$ \big\{x\in\Om\ :\ |\nabla u_{\bar\theta}|(x)<\|\nabla u_{\bar\theta}\|_{L^\infty(\Om)}\big\}, $$
while \eqref{eq.2.5} follows from the min-max inequality which gives an equality evaluated at the optimal couple. Finally, the assertion ({\romannumeral 2}) follows from the regularity of the minimizer $u_1$ of $F_1$ as in Proposition \ref{proposition:3.2}.
\end{proof}

\subsection{Proof of Theorem \ref{thopt1}: $L^\infty$-regularity for $\Om$ convex}
\label{section:aa}

Let $\Om\subset\R^2$ be a bounded open set which is either convex or with a boundary of class $C^{2,\alpha}$. Instead of relaxing the maximization problem to \eqref{optpblambda}, we can study
\be\label{eq.3.10}
\max_{\mu\in\sA_L}\inf_{u\in C_c^1(\Om)\setminus\{0\}}\left\{\frac{\int_\Om|\nabla u|^2 \,dx+m\int_\Om|\nabla u|^2\,d\mu}{\int_\Om|u|^2\,dx} \right\},
\ee
where $\sA_L$ is the class defined in \eqref{relpb1}. Set
\[
J(u,\,\mu) := \left\{\frac{\int_\Om|\nabla u|^2 \,dx+m\int_\Om|\nabla u|^2\,d\mu}{\int_\Om|u|^2\,dx} \right\},
\]
and consider the functional
\be \label{eq.3.11} J(\mu) := \inf_{u \in C_c^1(\Om) \setminus \{0\}} \left\{\frac{\int_\Om|\nabla u|^2 \,dx+m\int_\Om|\nabla u|^2\,d\mu}{\int_\Om|u|^2\,dx} \right\}. \ee
Notice that there are no measures in $\sA_L$ such that $J(\mu) = - \infty$. This is a significant advantage over the energy problem and can be checked easily since
$$ \frac{\int_\Om|\nabla u|^2\,dx+m\int_\Om|\nabla u|^2\,d\mu}{\int_\Om|u|^2\,dx}\ge \frac{\int_\Om|\nabla u|^2\,dx}{\int_\Om|u|^2\,dx}\ge\lambda_1(\Om)>0.$$

\begin{proposition} \label{prop.4.1} Let $\Om\subset\R^2$ be a bounded open set. Then the maximization problem \eqref{eq.3.10} admits a solution $\bar{\mu} \in\sA_L$ with
\be\label{eq.3.12}
\spt\mu\subset\{x\in\Om\ :\ |\nabla \bar{u}(x)| = \| \nabla \bar{u} \|_{L^\infty}\},
\ee
where $\bar{u}$ is the unique positive function with fixed $L^2$-norm achieving the minimum in the functional \eqref{eq.3.11}. Furthermore, we have the identity
\be\label{eq.3.13}
J(\bar{\mu})=\min_{u\in W_0^{1,\infty}(\bar{\Om})\setminus\{0\}}\frac{\int_\Om|\nabla u|^2\,dx+mL\|\nabla u\|^2_{L^\infty(\Om)}}{\int_\Om |u|^2\,dx}.
\ee
\end{proposition}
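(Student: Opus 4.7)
The plan is to prove the duality identity \eqref{eq.3.13} by combining easy weak duality with a compactness argument grounded in the $L^p$-approximation developed above; the support condition \eqref{eq.3.12} will then fall out of the case of equality in an elementary bound. Denote by $F_1$ the right-hand side functional in \eqref{eq.3.13}. Weak duality is immediate: for every $\mu\in\sA_L$ and every $u\in C_c^1(\Om)\setminus\{0\}$ the trivial bound $\int_\Om|\nabla u|^2\,d\mu\le\|\mu\|\,\|\nabla u\|_{L^\infty(\Om)}^2\le L\,\|\nabla u\|_{L^\infty(\Om)}^2$ gives $J(\mu)\le J(u,\mu)\le F_1(u)$, hence $\sup_{\mu\in\sA_L}J(\mu)\le\inf_u F_1(u)$. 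Existence of a maximizer $\bar\mu$ is Proposition \ref{exrel1}; existence of a positive $L^2$-normalized minimizer $u_1$ of $F_1$ on $W_0^{1,\infty}(\bar\Om)$ follows from the direct method together with the weak-$*$ lower semicontinuity of $u\mapsto\|\nabla u\|_{L^\infty}$.

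The core step is to produce a measure $\tilde\mu\in\sA_L$ with $J(\tilde\mu)\ge F_1(u_1)$. I would use the optimal densities $\theta_p\in\sA_{L,p}$ of Proposition \ref{proposition:3.2}: they satisfy $\|\theta_p\|_{L^p(\Om)}=L$ and $\lambda_1(\theta_p)=F_p(u_p)$. Since $|\Om|<+\infty$, H\"older gives
$$\|\theta_p\|_{L^1(\Om)}\le|\Om|^{(p-1)/p}\,L\longrightarrow L\qquad(p\to 1^+),$$
so a weak-$*$ subsequence of the measures $\theta_p\,dx$ converges to some $\tilde\mu$ with $\|\tilde\mu\|\le L$, i.e.\ $\tilde\mu\in\sA_L$. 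Testing against a fixed $u\in C_c^1(\Om)\setminus\{0\}$ (whose $|\nabla u|^2$ lies in $C_c(\Om)$) yields $\lambda_1(\theta_p)\le J(u,\theta_p\,dx)\to J(u,\tilde\mu)$; taking $\liminf_p$ and then infimum over $u$ gives $\liminf_p\lambda_1(\theta_p)\le J(\tilde\mu)$. Combining with the $\Gamma$-convergence input of Proposition \ref{proposition.3.6}(b), which guarantees $\lambda_1(\theta_p)=F_p(u_p)\to F_1(u_1)$, we obtain the sandwich $F_1(u_1)\le J(\tilde\mu)\le\sup_\mu J(\mu)\le F_1(u_1)$. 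This proves the identity \eqref{eq.3.13} and shows that $\tilde\mu$ is a maximizer.

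For the support condition \eqref{eq.3.12}, let $\bar u$ denote the unique positive $L^2$-normalized minimizer of $u\mapsto J(u,\bar\mu)$. Since $J(u_1,\bar\mu)\le F_1(u_1)=J(\bar\mu)$, $u_1$ also attains this infimum, and uniqueness forces $\bar u=u_1$. Monotonicity of $J$ in $\mu$ allows me to assume $\|\bar\mu\|=L$, and the equality $J(\bar u,\bar\mu)=F_1(\bar u)$ then rewrites as $\int_\Om|\nabla\bar u|^2\,d\bar\mu=\|\bar\mu\|\,\|\nabla\bar u\|_{L^\infty}^2$; saturation of the trivial bound $\int|\nabla\bar u|^2\,d\bar\mu\le\|\bar\mu\|\,\|\nabla\bar u\|_{L^\infty}^2$ forces $|\nabla\bar u|=\|\nabla\bar u\|_{L^\infty}$ $\bar\mu$-a.e., which is \eqref{eq.3.12}.

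The main obstacle is the $\Gamma$-convergence input $F_p(u_p)\to F_1(u_1)$, specifically the companion convergence $\|\nabla u_p\|_{L^{2q}}\to\|\nabla u_1\|_{L^\infty}$ in Proposition \ref{proposition.3.6}(b); this is the only nonroutine ingredient needed above. A secondary subtlety is the uniqueness of the positive $L^2$-normalized minimizer of the measure-weighted Rayleigh quotient $J(\cdot,\bar\mu)$, which in the generality of measure-valued coefficients requires the machinery of the Sobolev space $H^1_\mu$ from \cite{bobuse}.
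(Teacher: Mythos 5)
Your argument is correct in outline but follows a genuinely different route from the paper. The paper's proof of Proposition \ref{prop.4.1} is short and self-contained: after obtaining a maximizer by weak* compactness and upper semicontinuity (as in Proposition \ref{exrel1}), it writes the one-sided min--max inequality, computes $\sup_{\mu\in\sA_L}J(u,\mu)$ explicitly for fixed $u$, and then exhibits by hand a measure of mass $L$ concentrated on the set where the gradient of the minimizer is maximal; for this measure the inner infimum already equals $\inf_u\sup_\mu J(u,\mu)$, so sup and inf interchange and \eqref{eq.3.12} holds essentially by construction. You instead import the machinery of Section \ref{sproofs}: the optimal densities $\theta_p$ of Proposition \ref{proposition:3.2}, the uniform bound of Lemma \ref{lemma.boundeig}, and the $\Gamma$-convergence statement of Proposition \ref{proposition.3.6}, producing a maximizer as a weak* limit of $\theta_p\,dx$, very much as in the proof of Theorem \ref{thopt1} in Section \ref{subsec:sproofrec}. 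This is logically admissible (no circularity, since Proposition \ref{proposition.3.6} does not rely on Proposition \ref{prop.4.1}) and has the merit of identifying a maximizer constructively and consistently with Section \ref{subsec:sproofrec}; the price is that you invoke the standing hypotheses of Section \ref{sproofs} (Lipschitz, connected $\Om$ for the uniqueness statements you quote), whereas Proposition \ref{prop.4.1} is stated for a general bounded open set, and you partly forgo the point of Section \ref{section:aa}, which is to have an argument not resting on the $\Gamma$-convergence approach.

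Two steps need more care. First, your weak duality is taken over $u\in C_c^1(\Om)$, so the last inequality of your sandwich, $\sup_\mu J(\mu)\le F_1(u_1)$ with $u_1$ minimizing over $W_0^{1,\infty}$, silently uses $\inf_{C_c^1(\Om)}F_1\le\min_{W_0^{1,\infty}}F_1$; this is true (truncate the positive minimizer as $(u_1-\delta)^+$ and mollify, which does not increase $\|\nabla\cdot\|_{L^\infty(\Om)}$), but it should be stated --- the paper elides the same identification between the $C_c^1$ infimum and the $W_0^{1,\infty}$ minimum in \eqref{eq.3.13}. Second, and more substantively, your derivation of \eqref{eq.3.12} for an \emph{arbitrary} maximizer $\bar\mu$ hinges on the assertion that $u_1$ ``attains the infimum'' $J(\bar\mu)$: since $J(\bar\mu)$ is an infimum over $C_c^1(\Om)$ and $u_1\notin C_c^1(\Om)$, the inequality $J(u_1,\bar\mu)\le J(\bar\mu)$ does not by itself give attainment, nor can you compare $u_1$ with the minimizer $\bar u$ of \eqref{eq.3.11} without the relaxation theory of \cite{bobuse} (the $C_c^1$ infimum equals the minimum of the relaxed functional on $H^1_0(\Om)\cap H^1_{\bar\mu}$, whose value at the Lipschitz function $u_1$ is at most $J(u_1,\bar\mu)$). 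With that identification the squeeze $J(u_1,\bar\mu)=F_1(u_1)=J(\bar\mu)$ does follow, and saturation of $\int_\Om|\nabla u_1|^2\,d\bar\mu\le\|\bar\mu\|\,\|\nabla u_1\|^2_{L^\infty(\Om)}$ yields \eqref{eq.3.12} in the $\bar\mu$-a.e.\ sense; but you flag the $H^1_\mu$ machinery only for uniqueness, while it is needed for this attainment step as well. Note that the paper sidesteps all of this by \emph{choosing} $\bar\mu$ supported on the extremal set, so \eqref{eq.3.12} costs nothing; since the proposition only asks for one maximizer with that property, you could adopt the same shortcut and keep your $\Gamma$-convergence argument solely for the identity \eqref{eq.3.13}.
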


\begin{proof}First, we notice that for every fixed $u \in C_c^1(\Om)$ the map $\mu \mapsto J(u,\mu)$ is continuous with respect to the weak* convergence. Hence, being $J(\mu)$ the infimum of continuous maps, it is weakly* upper semi-continuous. Since we observed already that $\sA_L$ is weakly* compact and nonempty we infer that \eqref{eq.3.11} admits a solution $\bar{\mu}$. To prove the other claims, observe that
$$ \max_{\mu\in\sA_L}\inf_{u \in C_c^1(\Om)\setminus\{0\}} J(u,\mu) \le \inf_{u \in C_c^1(\Om) \setminus \{0\}} \sup_{\mu \in\sA_L} J(u,\mu) $$
is always true, but the equality does not hold a priori since we lack concavity of $J$ with respect to the first variable $u$. Nevertheless, we have
$$ \sup_{\mu\in\sA_L}J(u,\mu)=\frac{\int_\Om|\nabla u|^2\,dx+mL\|\nabla u\|^2_{L^\infty(\Om)}}{\int_\Om |u|^2\,dx}, $$
and it is easy to check that this is achieved using any measure $\mu$ with total mass equal to $L$ and support satisfying \eqref{eq.3.12}. Denote any one of them by $\bar{\mu}$ and notice that $\bar{\mu}\in\sA_L$ implies
\[\begin{split}
\max_{\mu\in\sA_L}\inf_{u\in C_c^1(\Om)\setminus\{0\}}J(u,\mu)
&\ge\inf_{u\in C_c^1(\Om)\setminus\{0\}}\frac{\int_\Om|\nabla u|^2\,dx+m\int_\Om |\nabla u|^2\,d\bar{\mu}}{\int_\Om |u|^2 \,dx}\\
&=\inf_{u\in C_c^1(\Om)\setminus\{0\}}\frac{ \int_\Om |\nabla u|^2 \,dx+ mL \|\nabla u\|_{L^\infty(\Om)}^2}{\int_\Om |u|^2 \,dx}\\
&=\inf_{u\in C_c^1(\Om)\setminus\{0\}}\sup_{\mu \in\sA_L} J(u, \, \mu).
\end{split}\]
This shows that we can interchange infimum and supremum and therefore the claim \eqref{eq.3.13} holds, concluding the proof.
\end{proof}

Now that we know the existence of the optimal measure $\bar{\mu}$, we can equivalently investigate the minimization problem associated with the functional
$$J_1(u) = \int_\Om |\nabla u|^2\,dx+ mL \|\nabla u\|_{L^\infty(\Om)}^2$$
satisfying the additional constraint $\int_\Om |u|^2 \,dx=1$. The proof of the next result follows immediately from \cite{ev1}. Notice that what we find out here is compatible with the investigation we carried out in Section \ref{sproofs}.

\begin{theorem}\label{thm.1.1.1}
The optimization problem
$$ \min\big\{J_1(u)\ :\ u\in H_0^1(\Om),\ \|u\|_{L^2(\Om)}=1\big\} $$
admits a unique solution $\bar{u}\in W^{2,p}(\Om)$ for all $p>d$. If, in addition, $\Om$ is convex, then $\bar{u}\in W^{2,\infty}(\Om)$.
\end{theorem}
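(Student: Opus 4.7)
The plan is to combine a direct-methods existence argument with the regularity theory of \cite{ev1} for gradient-constrained variational problems, via the observation that minimizers of $J_1$ are in fact minimizers of the Rayleigh quotient subject to a pointwise gradient bound.

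\textbf{Existence and uniqueness.} Any $u$ with $J_1(u)<+\infty$ lies in $W_0^{1,\infty}(\Om)$ with $\|\nabla u\|_{L^\infty(\Om)} \le (J_1(u)/(mL))^{1/2}$, so a minimizing sequence is bounded in $W_0^{1,\infty}(\Om)\cap H_0^1(\Om)$. I would extract a weakly-$*$ limit $\bar u\in W_0^{1,\infty}(\Om)$; the Dirichlet integral and the $L^\infty$-norm of $\nabla u$ are lower semicontinuous for this topology, and the $L^2$-normalization passes to the limit by Rellich compactness. Since $J_1(|u|)\le J_1(u)$, the minimizer can be taken nonnegative. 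For uniqueness, I would set $K:=\|\nabla\bar u\|_{L^\infty(\Om)}$ and show that $\bar u$ is also a minimizer of the standard Rayleigh quotient $\int|\nabla u|^2/\int u^2$ over the convex set $\{u\in H_0^1(\Om):\|\nabla u\|_{L^\infty}\le K\}$: if any competitor $v$ had strictly smaller quotient with the same gradient bound, a suitable rescaling would contradict the $J_1$-optimality of $\bar u$. This identifies $\bar u$ with the first eigenfunction of the Laplacian under a pointwise gradient constraint, whose uniqueness (up to sign) follows by the usual strong-maximum-principle argument on the connected set $\Om$.

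\textbf{Regularity.} The above reformulation places $\bar u$ precisely in the setting treated by Evans: differentiating the constrained functional yields formally the Euler--Lagrange equation
\[
-\mathrm{div}\bigl((1+\eta)\nabla\bar u\bigr)=\lambda_1\,\bar u\qquad\text{in }\Om,
\]
with $\eta\ge 0$ a nonnegative Lagrange multiplier concentrated on the contact set $\{|\nabla\bar u|=K\}$. The regularity theorem of \cite{ev1} for such gradient-constrained problems yields $\bar u\in W^{2,p}(\Om)$ for every $p>d$, using the $C^{1,1}$ nature of the underlying obstacle structure and a bootstrap argument for the Laplacian in the noncontact region. In the convex case, the auxiliary ``$P$-function'' technique exploited in \cite{ev1} extends up to the boundary (convexity controls the sign of the relevant boundary term as in Lemma~\ref{lemma.dep} with $H\ge 0$), upgrading the estimate to $\bar u\in W^{2,\infty}(\Om)$.

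\textbf{Main obstacle.} The delicate step is the derivation of the Euler--Lagrange condition: the map $u\mapsto\|\nabla u\|_{L^\infty}^2$ is not Gateaux differentiable, so a naive variation fails, and one has to use a subdifferential (or duality) argument to produce $\eta$ as a genuine nonnegative finite measure supported on the saturation set. Only after this identification is the problem in a form to which the results of \cite{ev1} apply verbatim; once the multiplier is in hand, the $W^{2,p}$ and (for convex $\Om$) $W^{2,\infty}$ bounds stated in the theorem follow immediately. The compatibility with Proposition~\ref{proposition:3.2} provides a useful sanity check, since the approximate minimizers $u_p$ already satisfy $C^{1,\beta}$ estimates that survive the limit $p\to 1^+$.
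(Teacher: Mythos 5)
Your overall route is the paper's route: the paper obtains this statement by reducing the minimization of $J_1$ to a gradient-constrained problem and quoting the regularity theory of \cite{ev1}, and your observation that a minimizer $\bar u$ of $J_1$ also minimizes the Rayleigh quotient over the convex set $C_K=\{v\in H^1_0(\Om):\|\nabla v\|_{L^\infty(\Om)}\le K\}$, $K=\|\nabla\bar u\|_{L^\infty(\Om)}$, is the right reduction. However, two of your steps do not hold up as written. First, uniqueness: ``the usual strong-maximum-principle argument'' is an argument about the \emph{linear} eigenvalue problem (it uses that linear combinations of first eigenfunctions are again eigenfunctions, or equivalently simplicity of $\lambda_1$ for a linear operator); here the sup-norm term makes the problem genuinely nonlinear, two minimizers could a priori even have different gradient bounds $K$, and no superposition is available, so that argument fails. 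What does work is the hidden convexity of $J_1$: for nonnegative normalized competitors $u_0,u_1$ set $\sigma_t=((1-t)u_0^2+tu_1^2)^{1/2}$; the pointwise inequality $|\nabla\sigma_t|^2\le(1-t)|\nabla u_0|^2+t|\nabla u_1|^2$ is compatible with \emph{both} the Dirichlet term and the $L^\infty$ term of $J_1$, while $\|\sigma_t\|_{L^2}=1$, and the equality analysis on the connected set $\Om$ forces $u_0=u_1$. This is the step your proposal is missing.

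Second, the regularity argument stops exactly at what you yourself flag as the delicate step: you never construct the multiplier $\eta$, and constructing it is not needed to invoke \cite{ev1}. Evans's results are stated for the variational inequality (equivalently for the equation $\max\{-\Delta u-f,\,|\nabla u|-K\}=0$) over the convex set $C_K$ with a \emph{given} right-hand side. You get that formulation directly from your own reduction: for every $v\in C_K$ the function $t\mapsto R(\bar u+t(v-\bar u))$, $R$ the Rayleigh quotient, is minimal at $t=0$, which gives $\int_\Om\nabla\bar u\cdot\nabla(v-\bar u)\,dx\ \ge\ \lambda_1\int_\Om\bar u\,(v-\bar u)\,dx$ for all $v\in C_K$, i.e.\ a gradient-obstacle problem with datum $f:=\lambda_1\bar u\in L^\infty(\Om)$ (recall $\bar u$ is Lipschitz, hence bounded). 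At that point the $W^{2,p}$ estimate for $p>d$, and the $W^{2,\infty}$ estimate when $\Om$ is convex, are precisely what the paper imports from \cite{ev1}; no subdifferential or duality construction of a measure $\eta$ is required, and leaving that construction as an acknowledged ``obstacle'' leaves your proof incomplete as it stands. With the uniqueness step replaced by the hidden-convexity argument and the Euler--Lagrange detour replaced by the variational inequality, your proof coincides with the paper's intended one-line appeal to \cite{ev1}.
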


We can now show that the optimal measure $\bar{\mu}$ belongs to $L^p$ spaces for $p<\infty$ arbitrary, as in Section \ref{subsec:sproofrec}, and show that we can obtain a uniform $L^\infty$ estimate if $\Om$ convex concluding the proof of Theorem \ref{thopt1}.

\begin{proof}[Proof of Theorem \ref{thopt1} ({\romannumeral 1})] Let $\bar{\mu}$ be the optimal measure given by Proposition \ref{prop.4.1}. A standard result in elliptic regularity theory (e.g, \cite{stbr1}) implies
$$ \bar{u} \in \mathrm{argmin}(J_1(\cdot)) \implies \bar{u} \in C^{2, \, \beta}(\Om) $$
for some $\beta > 0$. Note that here we use the fact that $\Om$ is either regular ($C^{2, \, \alpha}$ and thus $\beta$ depends on $\alpha$) or convex. By Theorem \ref{thm.1.1.1}, we infer that
$$\Delta u \in L^p(\Om) \qquad \text{for all $p \geq 1$}.$$
Thus $\bar{u}$ and $\bar{\mu}$ solve the problem
\[\begin{cases}
- \divs( \bar{\mu} \nabla \bar{u} ) = \Delta \bar{u} + \lambda_1\bar{u}, & \text{if $x \in \Om$},
\\[.5em] \bar{u} \, \big|_{\partial \Om} \equiv 0,
\\[.5em] |\nabla \bar{u}(x)| = \|\nabla \bar{u} \|_{L^\infty(\Om)}, & \text{if $x \in\spt(\bar{\mu})$}.
\end{cases}\]
The right-hand side of the equation belongs to $L^p(\Om)$ so we deduce that also $\bar{\mu} \in L^p(\Om)$ for all $p > d$ using the regularity results for the Monge-Kantorovich problem given in \cite{paevpr, papr1, sa1}. Similarly, we have
$$\Om\text{ convex}\implies\Delta\bar{u}\in L^\infty(\Om),$$
and we can apply once more \cite{paevpr, papr1, sa1} to conclude that $\bar{\mu}\in L^\infty(\Om)$.
\end{proof}

\begin{remark}
The approach via $\Gamma$-convergence and the approach presented in this section are both needed to prove Theorem \ref{thopt1}. Indeed, the $L^\infty$ estimate on $\bar{\theta}$ is impossible to obtain via $\Gamma$-convergence, even if we assume that $\Om$ is convex. On the other hand, the Monge-Kantorovich approach requires $\Om$ to be either convex or $C^{2,\alpha}$ even for the higher integrability result $\theta \in L^p(\Om)$, which only requires the uniform external ball condition with $\Gamma$-convergence.
\end{remark}

\section{The radial case}\label{sradial}

Let $\Om$ be the unit disc of $\R^2$. In this section, we exploit the symmetries of the domain to show that the solution of \eqref{relpb1} is a radially symmetric function with an explicit formula. First, we prove a technical lemma which allows us to use polar coordinates to deal with the min-max problem.

\begin{lemma} \label{lemma:radiale}
The optimal density $\bar{\theta}$ solution of \eqref{relpb1} and the corresponding optimal profile $\bar{u} := u_{\bar{\theta}}$ are both radially symmetric functions.
\end{lemma}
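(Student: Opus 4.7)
The plan is to exploit the rotational invariance of both the domain $\Om$ (the unit disc) and the auxiliary functionals, together with the uniqueness of the approximating minimizer $u_p$ for $p>1$ established in Proposition \ref{proposition:3.2}. First I would observe that for any rotation $R \in O(2)$ the Lebesgue measure on $\Om$, the Dirichlet energy, and the $L^q$-norm $\||\nabla u|^2\|_{L^q(\Om)}$ are all invariant under the change of variables $u \mapsto u \circ R$. Consequently the functional
\[
F_p(u) = \frac{\int_\Om |\nabla u|^2\,dx + mL \||\nabla u|^2\|_{L^q(\Om)}}{\int_\Om |u|^2\,dx}
\]
satisfies $F_p(u \circ R) = F_p(u)$ for every admissible $u$.

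Next, since $u_p$ is the \emph{unique} positive minimizer of $F_p$ with unitary $L^2$-norm, and $u_p \circ R$ is manifestly another such minimizer (positivity, $L^2$-normalization and the value of $F_p$ are preserved by rotations), uniqueness forces $u_p \circ R = u_p$ for every $R \in O(2)$. Hence $u_p$ is radially symmetric, which in turn implies that $|\nabla u_p|(x)$ depends only on $|x|$, and the explicit expression from Proposition \ref{proposition:3.2},
\[
\theta_p = L \frac{|\nabla u_p|^{2/(p-1)}}{\||\nabla u_p|^{2/(p-1)}\|_{L^p(\Om)}},
\]
then makes $\theta_p$ radial as well.

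Finally I would pass to the limit $p \to 1^+$. Proposition \ref{proposition.3.6}\,(b) yields strong convergence $u_p \to u_1$ in $H^1_0(\Om)$, and $u_1$ coincides with $\bar u$ by the identification carried out in Subsection~\ref{subsec:sproofrec}. The same subsection shows that, up to subsequences, $\theta_p$ converges weakly in $L^2(\Om)$ to the optimal density $\bar\theta$. Since the class of radial functions is closed under both strong $H^1$-convergence and weak $L^2$-convergence, both $\bar u$ and $\bar\theta$ inherit radial symmetry.

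The only real subtlety is why the symmetrization argument must be applied to the approximating couples $(u_p,\theta_p)$ rather than directly to $(\bar u, \bar\theta)$: uniqueness of the positive minimizer is guaranteed by Proposition \ref{proposition:3.2} only for $p>1$, thanks to the strict convexity effectively induced by the $L^q$-term, whereas the limiting problem at $p=1$ could a priori admit several optimal pairs. The approximation step bypasses this difficulty, and no further obstacles should arise.
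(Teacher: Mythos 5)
Your proof is correct, but it reaches the conclusion by a genuinely different mechanism than the paper. For the approximating minimizers $u_p$ the paper invokes symmetrization and the P\'olya--Szeg\"o inequality: the rearranged function $u_p^\ast$ has no larger value of $F_p$, so the uniqueness of the positive normalized minimizer forces $u_p=u_p^\ast$; you instead compose with rotations and use the same uniqueness from Proposition \ref{proposition:3.2} to get $u_p\circ R=u_p$ for every $R\in O(2)$. Your variant is more elementary (no rearrangement inequality is needed, only the rotational invariance of the disc and of $F_p$), and since $u_p\in C^1(\bar\Om)$ the a.e.\ identities upgrade to genuine radial symmetry; the paper's symmetrization yields the extra information that $u_p$ is radially decreasing, which however is not part of the statement of Lemma \ref{lemma:radiale}. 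For the density, the paper argues at the limit level, asserting that $\bar\theta$ maximizes the linearized functional $\theta\mapsto\int_\Om(1+\theta)|\nabla\bar u|^2\,dx$ and applying a rearrangement result from Henrot's book, whereas you note that the explicit formula of Proposition \ref{proposition:3.2} makes each $\theta_p$ radial and that radial functions form a closed linear subspace of $L^2(\Om)$, hence weakly closed, so the weak limit $\bar\theta$ constructed in Subsection \ref{subsec:sproofrec} is radial as well. This is arguably cleaner, and it establishes symmetry for exactly the solution the paper works with in Section \ref{sradial}; like the paper's argument, it does not settle whether every solution of \eqref{relpb1} is radial, and both proofs ultimately rest on the uniqueness of $u_p$ claimed in Proposition \ref{proposition:3.2} together with the convergences of Proposition \ref{proposition.3.6}.
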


\begin{proof}
Let $u_p$ be the function given in Proposition \ref{proposition:3.2}. Then $u_p$ is the unique solution (with fixed $L^2$-norm) of the minimization problem
$$ \min_{u \in H_0^1(\Om)} \left\{ \frac{\int_\Om |\nabla u|^2 \,dx+mL\| \nabla u \|_{L^{2q}(\Om)}^2}{\int_\Om |u|^2 \,dx} \right\}. $$
Now recall that the Steiner symmetrization \cite[Chapter 7]{stha1} of a function $u\in H_0^1(\Om)$, denoted by $u^\ast$, satisfies the P\'olya-Szeg\"o's inequality
\[\int_{\Om^\ast}|\nabla u^\ast|^p\,dx\le\int_\Om |\nabla u|^p\,dx.\]
for all $1\le p<\infty$ (see \cite[Theorem 2.2.4]{henrot1}). The unit ball is symmetric so it coincides with its symmetrization $\Om^\ast$ and the $L^2$-norm of $u$ coincide with the one of $u^\ast$ so from the inequality
$$
\frac{\int_\Om|\nabla u^\ast|^2\,dx+mL\||\nabla u^\ast|^2\|_{L^q(\Om)}}{\int_\Om|u^\ast|^2\,dx} \le \frac{\int_\Om|\nabla u|^2\,dx+mL\||\nabla u|^2\|_{L^q(\Om)}}{\int_\Om|u|^2\,dx}
$$
we infer that each $u_p$ is radially symmetric. On the other hand, we proved in Proposition \ref{proposition.3.6} that $u_p$ converges strongly in $L^2$ to $\bar{u}$ as $p \to 1$ and therefore, up to subsequences, we can assume that $u_p$ converges almost everywhere to $\bar{u}$. Thus
$$
\bar{u}(x) = \lim_{p \to 1^+} u_p(x) =\lim_{p \to 1^+} u_p(|x|) \implies \text{$\bar{u}$ radially symmetric.}
$$
Finally, if we choose $\bar{u}$ to be the solution with $L^2$-norm equal to one, then $\bar{\theta}$ is the unique maximizer of the functional
$$ \theta \longmapsto \int_\Om (1 + \theta)\nabla \bar{u}\,dx.$$
The function $\bar{u}$ is radial so we can apply ({\romannumeral 4}) of \cite[Theorem 2.2.4]{henrot1} and obtain
$$\int_\Om (1 + \theta) \nabla \bar{u}\,dx\le \int_\Om (1+\theta)^\ast \nabla \bar{u} \,dx,$$
which allows us to conclude that $\bar{\theta}$ is also radially symmetric.
\end{proof}

Now fix $d = 2$ for simplicity and notice that the optimal profile $\bar{u}$ and the optimal density $\bar{\theta}$ satisfy the elliptic equation
\be \label{eq.3.8.c}
-\divs((1+\bar{\theta})\nabla \bar{u}) = \lambda_1 \bar{u}
\ee
so, exploiting the fact that they are both radially symmetric, we can write
\be \label{eq.3.8.a}
- \frac{1}{r} \partial_r \big( r (1 + \bar{\theta}(r)) \bar{u}'(r) \big) = \lambda_1 \bar{u}(r).
\ee
Since by Theorem \ref{thopt1} the support of $\bar{\theta}$ is contained in the set where $|\nabla \bar{u}|$ achieves its maximum value, it is easy to see that there exists $\bar{a} \in (0,\, 1)$ such that
$$
\bar{u}(r) = 1-r \quad \text{for all $r \in [\bar{a},\, 1]$}.
$$
Notice that we can avoid placing a constant in front of $(1 - r)$ because we are using that $\bar{u}$ is unique when the $L^2$-norm is fixed (although we do not care about the actual value here). It follows from \eqref{eq.3.8.a} that
$$
\frac{1}{r} \partial_r \big( r (1 + \bar{\theta}(r)) \big) = \lambda_1 (1-r) \quad \text{for all $r \in [\bar{a}, \, 1]$},
$$
and this leads to a ordinary differential equation in $\theta$ which admits an explicit solution that depends on $\bar{a}$ and $\lambda_1$, that is,
\[
\theta(r) = \begin{cases} 0 & \text{if $r \in [0,\, \bar{a}]$},
\\[.6em] - \frac{\lambda_1}{3}r^2 + \frac{\lambda_1}{2}r - 1 + \frac{\bar{a}}{r} \left( 1 + \frac{\lambda_1}{3} \bar{a}^2 - \frac{\lambda_1}{2} \bar{a} \right) & \text{if $r \in [\bar{a},\, 1]$}. \end{cases}
\]
Suppose that the value of $\lambda_1$ is known. We can find the optimal value $\bar{a}$ by exploiting the integral condition on $\theta$. Namely, we know that $\int_\Omega \theta \, dx = L$ so
$$
\int_{\bar{a}}^1 \theta(r) r \, dr = \frac{L}{2 \pi} \implies \lambda_1(\bar{a}) =12 \left( \frac{\frac{L}{2 \pi} + \frac{1}{2}(\bar{a}-1)^2}{1 - 6\bar{a}^2 + 8 \bar{a}^3 - 3 \bar{a}^4} \right).
$$
This leads to
\be \label{eq.3.8.b}
\lambda_1 = \left( \frac{\frac{L}{2 \pi} + \frac{1}{2}(\bar{a}-1)^2}{1 - 6\bar{a}^2 + 8 \bar{a}^3 - 3 \bar{a}^4} \right) \implies \bar{a} = f^{-1}(\lambda_1),
\ee
which admits a unique solution in the interval $(0,\, 1)$ provided that $\lambda_1$ is bigger than or equal to the minimum of $f$, which is true for the one given in \eqref{eq.3.8.c}.

\begin{remark}
In the energy problem with $f = 1$, one can prove (see Example 5.1 of \cite{buouve1}) that $\bar{a}$ is the unique solution of the polynomial equation
$$ 
a^{d+1} - (d+1) \left( 1 + \frac{mL}{\omega_d} \right)a + d = 0. 
$$
Observe that for $L = 0$ the unique solution of the equation is $\bar{a} = 1$, and this is compatible with the fact that there is no reinforcement at all. The same is true in the eigenvalue case, but it cannot be inferred from \eqref{eq.3.8.b} since the relation holds only when a density appears.
\end{remark}

We can now recover $\bar{u}$, the optimal profile, completely using the boundary condition naturally arising from the decomposition and the Neumann condition at the origin. Namely, it is easy to verify that
$$
\bar{u}(r) = c_1 J_0( \sqrt{\lambda_1} r) + c_2 Y_0 (\sqrt{\lambda_1} r) \quad \text{for $r \in [0,\, \bar{a}]$},
$$
where $J_0$ and $Y_0$ are the first Bessel functions of first and second kind respectively. To find the constants we simply notice that by continuity
$$
c_1 J_0( \bar{a} \sqrt{\lambda_1}) + c_2 Y_0 (\bar{a} \sqrt{\lambda_1} ) = 1-\bar{a}
$$
and, similarly, the Neumann condition gives
$$
\lim_{r \to 0^+} \left[ c_1 J_1( \sqrt{\lambda_1} r) + c_2 Y_1 (\sqrt{\lambda_1} r) \right] = 0.
$$
Since $\lim_{r \to 0^+} Y_1(r) = - \infty$ and $\lim_{r \to 0^+} J_1(r) = 0$, the second condition is satisfied if and only if $c_2 = 0$. This immediately shows that
$$
c_1 = \frac{1-\bar{a}}{J_0(\bar{a} \sqrt{\lambda_1} )},
$$
and therefore the optimal profile is given by
$$
\bar{u}(r) = \begin{cases} \frac{1-\bar{a}}{J_0(\bar{a} \sqrt{\lambda_1} )} J_0( \sqrt{\lambda_1} r)& \text{if $r \in [0,\, \bar{a}]$},
\\[.6em] 1 - r & \text{if $r \in [\bar{a},\,1]$}. \end{cases}
$$
We now show the shape of the optimal density $\bar{\theta}$ (see Figure \ref{figure:1}) via a numerical analysis. The general idea is to fix $\lambda_1$ admissible (bigger than $j_{0,0}^2$) and recover the unique $\bar{a} \in (0,\, 1)$ from the minimum problem
$$
\min_{a \in (0,\,1)} \frac{ \left[ \sqrt{\lambda_1} \frac{1-\bar{a}}{J_1(\bar{a} \sqrt{\lambda_1})} \right]^2\int_0^a r \left[J_0( \sqrt{\lambda_1} r) \right]^2 \, dr + \frac{1}{2}(1-a^2) + \frac{mL}{2 \pi}}{\left[ \frac{1-\bar{a}}{J_0(\bar{a} \sqrt{\lambda_1})} \right]^2\int_0^a r \left[J_0( \sqrt{\lambda_1} r) \right]^2 \, dr + \int_a^1 r(1-r)^2 \, dr}.
$$
Finally the length $L$ is determined starting from the identity \eqref{eq.3.8.b}. The numerical simulation confirms the regularity result in Theorem \ref{thopt1} because $\bar{a} \in (0,\,1)$ turns out to be the unique one for which
$$
- \frac{(1-\bar{a})\sqrt{\lambda}}{J_0(\bar{a} \sqrt{\lambda_1} )} J_1( \sqrt{\lambda_1} r) = \lim_{r \to \bar{a}^-} \bar{u}'(r) = \lim_{r \to \bar{a}^+} \bar{u}'(r) = -1
$$
holds. We would like to point out the main difference with the energy problem. In Example 5.1 of \cite{buouve1} it was proved that the optimal density is linear,
$$\bar{\theta}_f(r)=\Big(\frac{r}{\bar a}-1\Big)^+\qquad\forall r\in[0,1],$$
while in our case the optimal density is not linear (it depends on $r^2$ and $r^{-1}$) and, coherently with this dependence, it is not strictly increasing but rather
$$
\exists \, \bar{r} \in (\bar{a},\, 1) \: : \: \text{$\theta \, \big|_{(\bar{a},\, \bar{r})}$ increasing and $\theta \, \big|_{(\bar{r},\, 1)}$ decreasing}.
$$

\begin{figure}[!tb] 
\begin{minipage}{0.48\textwidth}
\centering
\includegraphics[width=.95\linewidth]{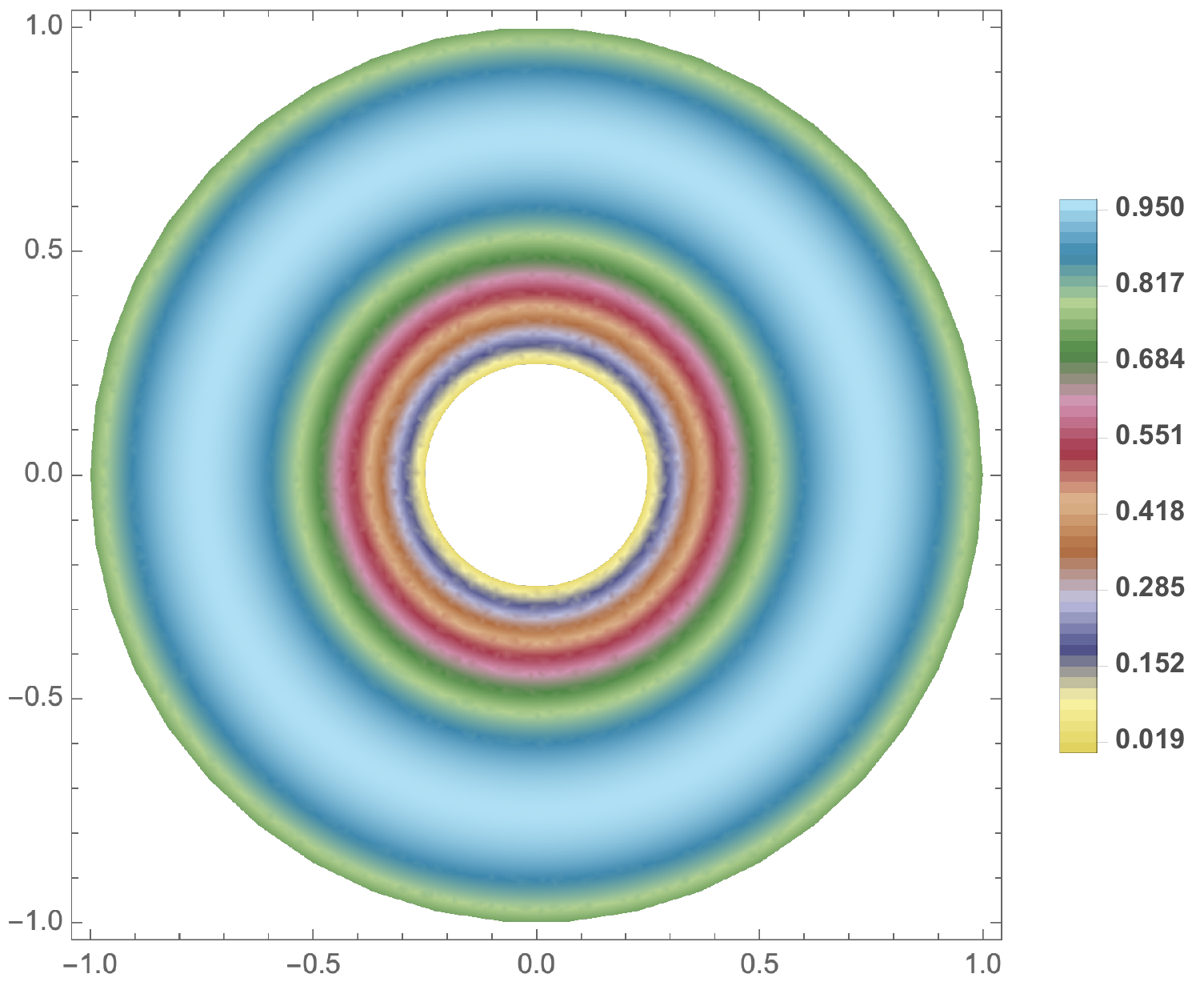}
\end{minipage}\hfill
\begin{minipage}{0.48\textwidth}
\centering
\includegraphics[width=.95\linewidth]{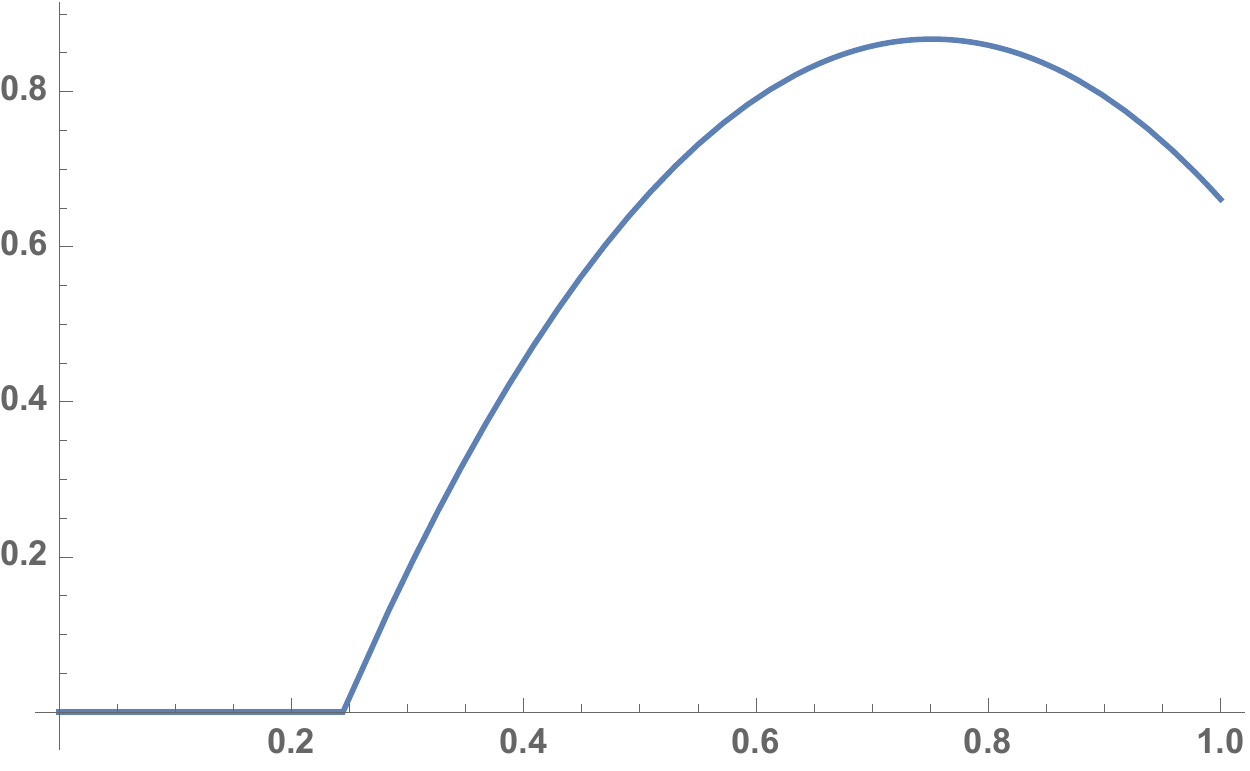}
\end{minipage}
\caption{The picture on the left represents the levels sets of $\theta$ on $B(0,\,1)$. We used $\lambda_1 = 10$ and $m = 5$ and obtained $\bar{a} \approx 0.244419$ and $L \approx 0.424242$. The function $\theta$ is increasing up to $r \approx 0.751491$ and then it starts decreasing up to the border.}
\label{figure:1}
\end{figure}

\section{The connected case}\label{sec:4}

We now consider the maximization problem \eqref{optpb2}, in which $S$ ranges in the class of closed, connected, one-dimensional subsets of $\bar\Om$. We follow closely the method introduced in \cite{abgo} for the same optimal reinforcement problem when an external force acts on the membrane $\Om$, and we show that a small modification of the main proof is enough to get the same conclusion in the eigenvalues' problem.

\subsection{Proof of Theorem \ref{thopt2}}

In Proposition \ref{exrel2} we proved that there exists a solution $\mu$, in the class $\sA_L^c$, to the maximization problem \eqref{relpb2}. It only remains to show that there is some $\theta \in L^1(\Om)$ such that $\mu = \theta \cH^1 \restr S$. The following technical result was proved in \cite[Lemma 3.3]{abgo}.

\begin{lemma}Let $K$ be a compact set in $\R^2$ with $|K| = 0$. For all $\eps > 0$ there exists a function $\phi_\eps$ of class $C^\infty$ satisfying the following properties:
\begin{enumerate}[label=(\arabic*)]
\item $\phi_\eps$ is locally constant on $K$;
\item $|\phi_\eps(x) - x| \le \eps$ at all points $x \in \R^2$;
\item $|\nabla \phi_\eps(x)| \le 1$ for all $x \in \R^2$ and $|\nabla \phi_\eps(x)| = 1$ everywhere except in an open set $A_\eps$ of measure less than $\eps$ containing $K$.
\end{enumerate}
\end{lemma}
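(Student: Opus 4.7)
The construction is guided by the one-dimensional model: if $E \subset \R$ is an open set with $|E| < \eps$, then $\varphi(t) = t - \int_{-\infty}^{t}\chi_E(s)\,ds$ is $1$-Lipschitz with $\varphi'(t)\in\{0,1\}$ almost everywhere (equal to $1$ outside $E$ and to $0$ on $E$); it is locally constant on $E$ and satisfies $|\varphi(t) - t|\le|E|<\eps$. A small mollification turns $\varphi$ into a $C^\infty$ function preserving these properties on a slight enlargement of $E$. The aim is to build a 2D version of this by combining a covering argument with a tensor-product-type construction.

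In two dimensions, my plan is as follows. First, I would use the outer regularity of the Lebesgue measure together with the compactness of $K$ to choose an open set $A_\eps \supset K$ with $|A_\eps| < \eps$; by covering $K$ with small balls of diameter less than $\eps$, one can additionally arrange $A_\eps$ as a union of open cells of diameter less than $\eps$ with bounded overlap. This $A_\eps$ is the set appearing in condition~(3). Second, on each cell I would construct a local smooth map that equals the identity near the boundary of the cell, is constant on a neighborhood of $K$ inside the cell, and has Jacobian of operator norm bounded by $1$. Third, glue these local maps by a smooth partition of unity subordinate to the covering, and verify the three properties directly. Property~(2) follows because $\phi_\eps$ differs from the identity only inside $A_\eps$, and on each cell the prescribed constant value is within $\eps$ of every point of the cell (since the diameter is less than $\eps$). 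Property~(1) is built into the local construction. The gradient bound in (3) is assured cell-by-cell, with equality outside $A_\eps$ where $\phi_\eps\equiv\mathrm{id}$.

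The principal obstacle is constructing the local map on each cell while maintaining the operator-norm bound $|\nabla\phi_\eps|\le 1$. A naive radial collapse of a ball to its center fails condition~(3), because the radial singular value of its Jacobian exceeds $1$ in the transition annulus: to go from constant at $|x|=r_0$ to identity at $|x|=R$ one needs $\int_{r_0}^{R} g'(r)\,dr = R$, so $\max g'>1$ whenever $r_0>0$. The standard workaround is to build the local map as a smoothed tensor product of two one-dimensional maps of the form $\varphi$ above, in suitably chosen orthogonal coordinates: the Jacobian then becomes diagonal with entries in $[0,1]$, hence of operator norm at most $1$, and the region of local constancy becomes a grid of small rectangles which can be chosen to contain $K$ inside the cell. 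The hypothesis $|K|=0$ is essential here, as it guarantees that $K$ can be covered by arbitrarily fine families of rectangular cells of small total measure, exactly the structure to which the tensor-product construction applies.
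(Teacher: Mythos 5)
You start from the right one--dimensional model and you correctly identify the catch-up obstruction for a radial collapse, but the plan you build on top of it fails at its central step. A smooth map on a cell that ``equals the identity near the boundary of the cell, is constant on a neighborhood of $K$ inside the cell, and has Jacobian of operator norm bounded by $1$'' cannot exist, for exactly the reason you invoke against the radial collapse: if $\phi$ is $1$-Lipschitz on a convex cell, equal to the identity near its boundary and constant on a nonempty open set $U$ inside, choose $a,b$ fixed by $\phi$ so that the segment $[a,b]$ crosses $U$ along a chord of length $\ell>0$; then $|a-b|=|\phi(b)-\phi(a)|\le\int_0^1|\nabla\phi(a+t(b-a))|\,|b-a|\,dt\le|b-a|-\ell<|a-b|$, a contradiction. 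Your own 1D model already displays this: $\varphi$ is a \emph{translate} of the identity beyond $E$, not the identity --- which is precisely why the lemma asks only for $|\nabla\phi_\eps|=1$ outside $A_\eps$ and not for $\phi_\eps=\mathrm{id}$ there. Hence your verification of (2) (``$\phi_\eps$ differs from the identity only inside $A_\eps$'') and the closing claim ``$\phi_\eps\equiv\mathrm{id}$ outside $A_\eps$'' are unattainable, and the smoothed tensor products you propose inside each cell do not satisfy the gluing boundary condition either, since they displace whole rows and columns of the cell. Independently, partition-of-unity gluing is incompatible with a hard gradient constraint: $\nabla\bigl(\sum_i\psi_i\phi_i\bigr)=\sum_i\psi_i\nabla\phi_i+\sum_i\nabla\psi_i\otimes(\phi_i-\phi)$, and the second sum (of size $\eps$ divided by the overlap width) generically pushes the norm above $1$ and destroys the equality $|\nabla\phi_\eps|=1$ on overlaps lying outside $A_\eps$. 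Note also that the paper itself gives no proof here: the lemma is quoted from \cite[Lemma 3.3]{abgo}, so your construction has to stand entirely on its own, and as written it does not.

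The construction must be global, and in fact your 1D model is essentially all that is needed once the hypothesis is read correctly. Observe first that with only $|K|=0$ conclusions (1)--(2) are incompatible whenever $K$ is connected with diameter larger than $2\eps$ (local constancy plus smoothness forces one constant value on a connected open neighborhood of $K$, and then (2) gives $\diam(K)\le2\eps$); in the application $K$ carries the part of $\mu$ singular with respect to $\cH^1\restr S$, so $\cH^1(K)=0$, and it is this one-dimensional smallness that a proof must exploit --- not the planar nullity your last sentence points to. With $\cH^1(K)=0$ both coordinate projections $\pi_i(K)$ are Lebesgue-null in $\R$, so you may pick open sets $E_i\supset\pi_i(K)$ with $|E_i|<\eps/2$, build your mollified 1D maps $\varphi_i$ with $0\le\varphi_i'\le1$, $\varphi_i'=0$ near $\pi_i(K)$ and $\varphi_i'=1$ outside $E_i$, and simply set $\phi_\eps(x)=(\varphi_1(x_1),\varphi_2(x_2))$. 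The Jacobian is diagonal with entries in $[0,1]$, hence of operator norm at most $1$, and it equals $1$ unless both entries are below $1$, i.e.\ outside the open product $A_\eps=E_1\times E_2\supset K$, whose plane measure is less than $\eps^2/4<\eps$; moreover $|\phi_\eps(x)-x|\le|E_1|+|E_2|<\eps$, and $\phi_\eps$ is constant near each point of $K$. No covering by cells, no partition of unity, and no identity-outside requirement enter at all: the smallness in (2) comes from the smallness of $|E_i|$, which is exactly what your localization scheme was trying, unsuccessfully, to substitute for.
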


We shall now prove that the optimal measure is absolutely continuous with respect to $\cH^1$. In \cite[Lemma 3.4]{abgo} the same result is obtained in the energy problem, so it is sufficient to estimate the denominator and conclude in the same way.

\begin{proof}[Proof of Theorem \ref{thopt2}] The main difference with the mentioned paper is that we will show that for all $v \in C_c^\infty(\Om)$ and $\eps > 0$ there exists $u \in H_0^1(\Om)$ such that
$$\frac{ \int_\Om |\nabla u|^2\,dx+ m \int_\Om |\nabla u|^2\,d\mu}{ \int_\Om |u|^2 \,dx} \le \frac{ \int_\Om |\nabla v|^2 \,dx+m \int_\Om |\nabla v|^2 \,d\mu^a + C_1 \eps}{ \int_\Om |v|^2\,dx- C_2 \eps},$$
where $\mu^a$ is the absolutely continuous part of $\mu$. Indeed, when $\eps > 0$ is small enough we can use Taylor approximation theorem to infer that
$$\frac{ \int_\Om |\nabla u|^2\,dx+ m \int_\Om |\nabla u|^2\,d\mu}{ \int_\Om |u|^2\,dx} \le \frac{ \int_\Om |\nabla v|^2\,dx+m \int_\Om |\nabla v|^2 \, d\mu^a}{ \int_\Om |v|^2\,dx} + C'\eps,$$
where $C'=C_1/C_2$ is a positive constant. Now let
$$u (x) := (1 - \theta_\eps(d(x))) v(x) + \theta_\eps(d(x)) v(\phi_\eps(x)),$$
where $d(x):=d(x,S)$ and $\theta$ is a cut-off function identically one on $B(S,\eps)$ and zero on the complement of $B(S,2 \eps)$. In \cite[Lemma 3.4]{abgo} it was proved that
$$\frac{1}{2} \int_\Om |\nabla u|^2\,dx+ \frac{m}{2} \int_\Om |\nabla u|^2\,d\mu \le \frac{C_1}{2} \eps + \frac{1}{2} \int_\Om |\nabla v|^2\,dx+ \frac{m}{2} \int_\Om |\nabla v|^2\,d\mu^a,$$
so we only have to deal with the denominator. A straightforward computation, assuming that $\|v\|_{L^2(\Om)}^2=1$, shows that
\[\begin{split}
\|u\|_{L^2(\Om)}^2
&=\int_\Om \theta_\eps^2(d(x)) \left[v(x)(v(x) - v(\phi_\eps(x))) + v(\phi_\eps(x))(v(\phi_\eps(x)) - v(x)) \right]\,dx+\\
& + 2 \int_\Om \theta_\eps(d(x))v(x)(v(\phi_\eps(x))-v(x))\,dx + 1,
\end{split}\]
which immediately leads to the following estimate. If we denote $A = \left| \|u\|_{L^2(\Om)}^2 - 1 \right|$, then we find that
\[\begin{split}
A&\le \int_\Om\theta_\eps^2(d(x))\big[|v(x)| \left| v(x) - v(\phi_\eps(x)) \right| + |v(\phi_\eps(x))| \left|v(\phi_\eps(x)) - v(x)\right|\big]\,dx\\
&\qquad+2\int_\Om |\theta_\eps(d(x))| |v(x)| \left| v(\phi_\eps(x))- v(x) \right|\,dx\\
&\le C\eps \int_\Om (|v(x)| + |v(\phi_\eps(x))|)\,dx+ 2 C \eps \int_\Om |v(x)|\,dx\\
&\le 4C\|v\|_{L^1(\Om)}\,\eps.
\end{split}\]
Therefore, to conclude the proof, it suffices to set $C_2 := 4 C \|v\|_{L^1(\Om)}$ and continue as in \cite[Lemma 3.4]{abgo}.
\end{proof}

\subsection{Indirect method and boundary points}

Let $S \in \cA_L^c$ and let $u$ be a solution of the minimization problem
\be \label{eq:5.1}
\lambda_1(S) := \min_{u \in H_0^1(\Om)\setminus \{0\}} \frac{ \int_\Om |\nabla u|^2\,dx+ m \int_S \theta |\nabla_\tau u|^2\,d\cH^1}{ \int_\Om |u|^2\,dx}.
\ee
Then for each $v \in H_0^1(\Om)$ there results
$$\frac{d}{d\eps}\Big|_{\eps = 0} \frac{ \int_\Om |\nabla (u+\eps v)|^2\,dx+ m \int_S \theta |\nabla_\tau (u+\eps v)|^2\,d\cH^{1}}{\int_\Om |u+\eps v|^2\,dx} = 0,$$
which is easily seen to be equivalent to
\[\begin{split}
\int_\Om u^2\,dx & \left[ \int_\Om \nabla u \cdot \nabla v\,dx+m \int_S \theta( \nabla_\tau u \cdot \nabla_\tau v) \,d\cH^{1}\right]\\
&-\int_\Om uv\,dx\left[\int_\Om|\nabla u|^2\,dx+m\int_S\theta|\nabla_\tau u|^2\,d\cH^{1}\right]=0.
\end{split}\]
Since $u$ minimizes the functional in \eqref{eq:5.1} we can substitute it with $\lambda_1(S)$ to obtain the following identity which is valid for all $v \in H_0^1(\Om)$,
$$\int_\Om \nabla u \cdot \nabla v\,dx+ m \int_S \theta (\nabla_\tau u \cdot \nabla_\tau v) \,d\cH^{1} - \lambda_1(S) \int_\Om uv \,dx= 0.$$
The integration by parts formula shows that
$$\int_\Om \nabla u \cdot \nabla v \,dx= - \int_\Om \Delta u v \,dx+ \int_S \left[\frac{\partial u}{\partial \nu} \right] v \,d\cH^{1},$$
where $\left[\frac{\partial u}{\partial \nu} \right] := \partial_+ u + \partial_- u$ does not depend on the choice of an orientation and $\partial_{\pm} u$ are, respectively, the positive and negative derivatives of $u$ on $S$. Finally, since
$$\nabla_\tau u \cdot \nabla_\tau v = (\nabla u \cdot \tau)\tau \cdot (\nabla v \cdot \tau) \tau = (\nabla u \cdot \tau) \cdot (\nabla v \cdot \tau),$$
we can integrate by parts the second term and obtain
$$m\int_S\theta\nabla_\tau u \cdot \nabla_\tau v \,d\cH^{1} = - m \int_S \divs_\tau(\theta \nabla_\tau u) v \,d\cH^{1} + m \left[v \theta \nabla_\tau u\right]_{S^\can},$$
where $-\divs_\tau(- \nabla_\tau)$ is the Laplace-Beltrami operator on $S$, and $S^\can$ is the set of terminal-type and branching-type points of $S$.

\begin{proposition}\label{prp:pdeformulation}If $u \in H_0^1(\Om) \cap C^2(\bar{\Omega})$ is a minimum point of \eqref{eq:5.1}, then it solves the following boundary-value problem:
\[
\begin{cases} - \Delta u = \lambda_1(S) u & \text{if $x \in \Om \setminus S$},
\\ u = 0 & \text{if $x \in \partial \Om$,}
\\ \left[\frac{\partial u}{\partial \nu} \right] - m \,\divs_\tau( \theta \nabla_\tau u) = 0 & \text{if $x \in S$},
\\[.6em] \big[\theta \nabla_\tau u \big]_{S^\can} = 0.\end{cases} 
\] \end{proposition}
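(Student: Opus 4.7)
The plan is to turn the Euler--Lagrange identity derived just above the proposition into a single distributional equation and then peel off each of the four conclusions by judicious choices of test functions. Substituting the two integration-by-parts formulas already recorded into the variational equation yields the master identity
\[
\int_\Om \bigl(-\Delta u - \lambda_1(S)u\bigr) v\,dx + \int_S \Bigl(\Bigl[\tfrac{\partial u}{\partial \nu}\Bigr] - m\,\divs_\tau(\theta\nabla_\tau u)\Bigr) v\,d\cH^1 + m\bigl[v\,\theta\,\nabla_\tau u\bigr]_{S^\can} = 0,
\]
valid for every $v \in C_c^\infty(\Om)$. The hypothesis $u \in C^2(\bar\Om)$ is what gives a pointwise meaning to $\Delta u$, to the jump $[\partial u/\partial \nu]$ across $S$, and to all the tangential derivatives along $S$ that appear.

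The four conditions are then extracted by successive localization. First, restricting the test functions to $v \in C_c^\infty(\Om \setminus S)$ kills both the $S$-integral and the $S^\can$-sum, and the fundamental lemma of the calculus of variations yields $-\Delta u = \lambda_1(S)u$ on $\Om\setminus S$; the Dirichlet condition $u = 0$ on $\partial\Om$ is inherited directly from $u \in H_0^1(\Om)$. Since $|S|=0$, the PDE just obtained annihilates the volume integral in the master identity for every $v \in C_c^\infty(\Om)$. Testing next against $v \in C_c^\infty(\Om)$ whose support avoids $S^\can$ eliminates the nodal sum and, by arbitrariness of $v|_{S \setminus S^\can}$, forces the transmission condition $[\partial u/\partial \nu] - m\,\divs_\tau(\theta\nabla_\tau u) = 0$ on $S$. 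Finally, with the transmission condition in hand the $S$-integral also drops out, and letting $v$ take arbitrary prescribed values at each point of $S^\can$ forces the Kirchhoff relation $[\theta\,\nabla_\tau u]_{S^\can} = 0$ node by node.

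The step I expect to be most delicate is the tangential integration by parts on $S$. Because $S$ is only a closed connected rectifiable set carrying a density $\theta \in L^1(S)$ with $\theta \ge 1$, its singular set $S^\can$ of branching and endpoint nodes may be countable, and one must decompose $S$ into a locally finite family of $C^1$-arcs meeting at $S^\can$, apply the one-dimensional Fundamental Theorem of Calculus arc by arc, and then verify that the resulting endpoint contributions assemble into $[v\,\theta\,\nabla_\tau u]_{S^\can}$ with absolute convergence. The $C^2(\bar\Om)$ regularity of $u$ is precisely what bounds the arc-wise quantities so that everything assembles correctly. A secondary technical point is the standard extension lemma guaranteeing that any smooth boundary data prescribed on $S \setminus S^\can$ or at the finitely many nodes under consideration can be realized by some $v \in C_c^\infty(\Om)$; this is what makes the three successive localization steps above legitimate applications of the fundamental lemma of the calculus of variations.
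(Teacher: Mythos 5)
Your proposal is correct and follows essentially the same route as the paper: the paper's proof consists precisely of the Euler--Lagrange identity and the two integration-by-parts formulas derived immediately before the statement, from which the four conditions are read off by the standard localization of test functions that you spell out. Your explicit discussion of the arc-by-arc tangential integration by parts on $S$ and of the extension of prescribed data to test functions in $C_c^\infty(\Om)$ only makes explicit steps the paper leaves implicit.
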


For points in $S^\can$ the following three situations are all possible and, in the general case, we expect all of them to appear:
\begin{enumerate}[label=\text{(\roman*)}]
\item \textbf{Dirichlet.} If $x \in S^\can\cap \partial \Om$, then $u(x) = 0$.
\item \textbf{Neumann.} If $x \in S^\can$ is a terminal point of $S$, then $\nabla_\tau u(x) = 0$.
\item \textbf{Kirchhoff.} If $x \in S^\can$ is a branching point of $S$, then
\[
\sum_i \nabla_{\tau_i} u^i(x) = 0,
\]
where $u^i$ is the trace of $u$ over the $i$-th branch of $S$ ending at $x$ and $\tau_i$ the corresponding tangent vector.
\end{enumerate}

As a consequence, using Proposition \ref{prp:pdeformulation} and Theorem \ref{thopt2}, it is easy to verify that \cite[Proposition 4.1]{abgo} can be proved in the same way.

\begin{proposition}
Let $\mu$ be a solution of \eqref{relpb2} and let $u$ be the unique positive solution of the associated minimization problem with $L^2$-norm fixed. Then there exists a positive constant $c$ such that
\[
\begin{cases}
|\nabla_\tau u| = c & \text{for $\cH^1$-a.e. $x\in \{ \theta(x) > 1 \}$},\\
|\nabla_\tau u| \le c & \text{for $\cH^1$-a.e. $x\in \{ \theta(x) = 1 \}$}.
\end{cases}
\]
\end{proposition}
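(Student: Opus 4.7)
The plan is to reduce the statement to a first-order optimality condition obtained by varying only the density $\theta$ on the already fixed connected set $S$. By Theorem \ref{thopt2}, $\mu=\theta\,\cH^1\restr S$ with $\theta\ge 1$, and by the monotonicity of $\lambda_1$ with respect to $\mu$ one has saturation $\int_S\theta\,d\cH^1=L$. Hence, keeping $S$ frozen, the optimal $\theta$ is a maximizer of the reduced problem
\[
\Theta\longmapsto\lambda_1\bigl(\Theta\,\cH^1\restr S\bigr),\qquad \Theta\ge 1\text{ on }S,\quad \int_S\Theta\,d\cH^1=L.
\]

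Next I would derive the Gateaux derivative of the reduced functional. Since $\Om$ is connected, the principal eigenvalue is simple and $u$ (normalized with $\|u\|_{L^2}=1$) is the unique positive eigenfunction. A standard perturbation argument (the envelope/Hellmann--Feynman computation) then gives, for admissible directions $\eta\in L^\infty(S)$ with $\int_S\eta\,d\cH^1=0$,
\[
\frac{d}{d\eps}\Big|_{\eps=0}\lambda_1\bigl((\theta+\eps\eta)\cH^1\restr S\bigr)=m\int_S |\nabla_\tau u|^2\,\eta\,d\cH^1,
\]
where the cross terms involving $\partial_\eps u$ vanish because $u$ is the minimizer and $\|u\|_{L^2}=1$ is preserved to first order under this perturbation (the density $\theta$ appears only in the Rayleigh quotient's numerator tangential piece).

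From here, the conclusion is a bang-bang / Lagrange-multiplier argument. On the open (in the $\cH^1$-sense) stratum $\{\theta>1\}$ both signs of $\eta$ are admissible: for any $\eta_0\in L^\infty(S)$ supported in a subset of $\{\theta>1\}$ with zero mean on $S$, both $\pm\eta_0$ are admissible directions, forcing
\[
\int_S |\nabla_\tau u|^2\,\eta_0\,d\cH^1=0.
\]
Since this holds for all such zero-mean $\eta_0$ supported where $\theta>1$, one concludes that $|\nabla_\tau u|^2$ is $\cH^1$-a.e.\ equal to a constant $c\ge 0$ on $\{\theta>1\}$. To treat $\{\theta=1\}$, I would choose mass-balancing variations of the form $\eta=\mathbf{1}_A-\tfrac{\cH^1(A)}{\cH^1(B)}\mathbf{1}_B$ with $A\subset\{\theta=1\}$ and $B\subset\{\theta>1\}$; these are admissible (non-negative on the obstacle, negative where there is slack, if $\eps$ is small enough), and the first-order inequality
\[
\int_A |\nabla_\tau u|^2\,d\cH^1-\frac{\cH^1(A)}{\cH^1(B)}\int_B |\nabla_\tau u|^2\,d\cH^1\le 0
\]
combined with $|\nabla_\tau u|^2\equiv c$ on $B$ yields $|\nabla_\tau u|^2\le c$ $\cH^1$-a.e.\ on $\{\theta=1\}$.

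The main obstacle I expect is justifying rigorously the Gateaux derivative formula in this measure-theoretic setting: one needs simplicity of $\lambda_1$ (which follows from connectedness of $\Om$ via the quoted reference in the paper), together with enough regularity of $u$ and of the tangential trace $\nabla_\tau u$ on $S$ to make sense of $|\nabla_\tau u|^2$ pointwise $\cH^1$-a.e., and to legitimize the perturbation argument when $\theta$ is only $L^1(S)$. The use of Proposition \ref{prp:pdeformulation} (the PDE characterization of $u$, including the Laplace--Beltrami transmission condition on $S$) should provide the regularity needed to interpret $\nabla_\tau u \in L^2(S,\theta\,d\cH^1)$, and a careful truncation/approximation argument with bounded perturbations confined to sets where $\theta$ is bounded above and away from $1$ then reduces everything to the bang-bang dichotomy stated above.
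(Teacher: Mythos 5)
Your proposal is correct and follows essentially the same route as the paper, which proves this proposition by deferring to the argument of \cite[Proposition 4.1]{abgo}: fix the optimal set $S$, perturb only the density $\theta$ subject to $\theta\ge 1$ and the saturated mass constraint, use the envelope (Hellmann--Feynman) computation of the first variation of $\lambda_1$, and conclude by the same mass-transfer/bang-bang dichotomy between $\{\theta>1\}$ and $\{\theta=1\}$. The technical points you flag (simplicity of the principal eigenvalue, interpreting $\nabla_\tau u$ on $S$, and confining variations to sets where $\theta$ is bounded and bounded away from $1$) are exactly the adaptations the paper relies on via Theorem \ref{thopt2} and Proposition \ref{prp:pdeformulation}.
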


\section{Conclusion and open problems}\label{sopen}

In this section we present and discuss some open questions related to the optimization problems we considered.

\begin{problem}
A first problem is related to the regularity of solutions. We have shown (Theorem \ref{thopt1}) that when $\Om$ is regular enough the optimization problem \eqref{relpb1} admits a solution $\mu$ that is indeed a function $\theta\in C^{1,\beta}(\bar\Om)$ for a suitable $\beta\in(0,1)$. It would be interesting to know if additional regularity properties on $\theta$ hold in general. Similarly, the optimization problem \eqref{relpb2} admits a solution $\mu$ which is of the form $\theta\cH^1\restr S$ for a suitable closed connected set $S\subset\bar\Om$ and a function $\theta\in L^1(S)$, with $\cL(S)\le L$ and $\theta\ge1$ on $S$. Even if we could expect that $S$ and $\theta$ are regular enough, at the moment these regularity results are not available and seem rather difficult. In particular, it would be interesting to prove (or disprove) the regularity of the optimal set $S$ up to a finite number of branching points, where the Kirchhoff rule $\sum_i\nabla_{\tau_i}u=0$ holds.
\end{problem}

\begin{problem}
For the optimal set $S$ of problem \eqref{relpb2} several necessary conditions of optimality merit to be investigated, for instance we list the following ones, that look similar to other problems studied in the fields of optimal transport and of structural mechanics (see \cite{buoust02}, \cite{bust03}, \cite{CLLS17}).
\begin{itemize}
\item[(a)]Does $S$ contain closed loops (i.e. subsets homeomorphic to the circle $S^1$)? This should not be the case, even if a complete proof is missing.
\item[(b)]The branching points of $S$ (if any) do have only three branches or a higher number of branches is possible?
\item[(c)]Does the optimal set $S$ intersect always the boundary $\partial\Om$?
\item[(d)]Is it possible that $\cL(S)=L$ or we always have $\cL(S)<L$ and hence $\theta>1$ somewhere on $S$?
\end{itemize}
\end{problem}

\begin{problem}
As stated in the Introduction, passing from a single connected set $S$ to sets with at most a $N$ connected components (with $N$ a priori fixed) does not introduce essential differences in the statements and in the proofs. However, it would be interesting to establish if, in the case when $N$ connected components are allowed, the optimal set $S$ has actually exactly $N$ components.
\end{problem}

\begin{problem}
Finally, the numerical treatment of the optimization problems we considered, present several difficulties, essentially due to the fact that a very large number of local maxima are possible and global optimization algorithms are usually too slow for this kind of problems. In the case of energy optimization, considered in \cite{abgo} and in \cite{buouve1}, some efficient optimization methods have been implemented, but the eigenvalue optimization considered in the present paper seems to present a higher level of complexity.
\end{problem}

\section*{Acknowledgements}
The work of the first author is part of the project 2017TEXA3H {\it``Gradient flows, Optimal Transport and Metric Measure Structures''} funded by the Italian Ministry of Research and University. The first author is members of the ``Gruppo Nazionale per l'Analisi Matematica, la Probabilit\`a e le loro Applicazioni'' (GNAMPA) of the ``Istituto Nazionale di Alta Matematica'' (INDAM).

\bigskip
{\small\noindent
Giuseppe Buttazzo:
Dipartimento di Matematica, Universit\`a di Pisa\\
Largo B. Pontecorvo 5, 56127 Pisa - ITALY\\
{\tt giuseppe.buttazzo@unipi.it}\\
{\tt http://www.dm.unipi.it/pages/buttazzo/}

\bigskip\noindent
Francesco Paolo Maiale:
Scuola Normale Superiore\\
Piazza dei Cavalieri 7, 56126 Pisa - ITALY\\
{\tt francesco.maiale@sns.it}\\
{\tt https://poisson.phc.dm.unipi.it/\char`\~fpmaiale/}

\end{document}